\numberwithin{equation}{section}
\newtheorem{theorem}{Theorem}[section]
\newtheorem{proposition}{Proposition}[section]
\newtheorem{lemma}{Lemma}[section]
\newtheorem{corollary}{Corollary}[section]
\newtheorem{remark}{Remark}[section]
\journal{Elsevier}
\begin{document}

\begin{frontmatter}

\title{Metrical theory of power-2-decaying Gauss-like expansion}
\author[label1,label2]{Zhihui Li}
\ead{huizhili123456@126.com}
\author[label1]{Xin Liao}
\ead{xin_liao@whu.edu.cn}
\author[label1]{Dingding Yu\corref{cor1}}
\ead{yudding_sgr@whu.edu.cn}

\address[label1]{School of Mathematics and Statistics, Wuhan University, Wuhan 430072, China}
\address[label2]{Wuhan business university, Wuhan 430056, China}

\cortext[cor1]{corresponding author}

\begin{abstract}
Each $x\in (0,1]$ can be uniquely expanded as a power-2-decaying Gauss-like expansion, in the form of
\begin{equation*}
x=\sum_{i=1}^{\infty}2^{-(d_1(x)+d_2(x)+\cdots+d_i(x))},\qquad d_i(x)\in \mathbb{N}.
\end{equation*}
Let $\phi:\mathbb{N}\to \mathbb{R}^{+}$ be an arbitrary positive function. We are interested in the size of the set
$$F(\phi)=\{x\in (0,1]:d_n(x)\ge \phi(n)~~\text{for infinity many}~n\}.$$
We prove a Borel-Bernstein theorem on the zero-one law of the Lebesgue measure of $F(\phi)$. When the Lebesgue measure of $F(\phi)$ is zero, we calculate its  Hausdorff dimension. Furthermore, we analyse the growth rate of the maximal digit among the first $n$ digits from probability and multifractal perspectives.

\end{abstract}
\begin{keyword}
power-2-decaying Gauss-like expansion; Borel-Bernstein theorem; maximal digits; Hausdorff dimension
\end{keyword}
\end{frontmatter}

\section{Introduction}

Each real number $x\in (0,1]$ can be uniquely expanded as a power-2-decaying Gauss-like expansion (P2DGL expansion, for short), in the form:
\begin{equation}\label{def}
x=\sum_{i=1}^{\infty}2^{-(d_1(x)+d_2(x)+\cdots+d_i(x))},
\end{equation}
where $d_i(x)\in \mathbb{N}$. Further results about P2DGL expansion can be found in \cite{neunhauserer2011hausdorff} and \cite{neunhauserer2022dimension}, where the P2DGL expansion is called base 2 expansion. 

Motivated by the metric theory of the regular continued fraction expansion, for a given function $\phi:\mathbb{N}\to \mathbb{R}^{+}$, we study the Lebesgue measure and Hausdorff dimension of the set
\begin{equation}\label{1111}
 F(\phi)=\{x\in (0,1]:d_n(x)\ge \phi(n),~~\text{i.m.}~n\},
\end{equation}
where i.m. denotes `infinitely many'.  Concerning the Lebesgue measure $\mathcal{L}$, we have the following theorem which is a version of Borel-Bernstein theorem for P2DGL expansion.

\begin{theorem}\label{main theorem BB}
Let $\phi:\mathbb{N}\to \mathbb{R}^{+}$ be an arbitrary positive function and $F(\phi)$ be defined as in (\ref{1111}).
Then $\mathcal{L}(F(\phi))$ is null or full according as whether the series $\sum_{n=1}^{\infty}\frac{1}{2^{\phi(n)}}$ converges or not.
\end{theorem}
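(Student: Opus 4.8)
The plan is to reduce the whole statement to the fact that, with respect to Lebesgue measure $\mathcal{L}$ on $(0,1]$, the digits $d_1,d_2,\dots$ form an independent sequence; once this is available, the two halves of the Borel--Cantelli lemma settle the two cases of the dichotomy.

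\emph{Step 1: cylinder structure.} For $(a_1,\dots,a_n)\in\mathbb{N}^{n}$ put
$$I_n(a_1,\dots,a_n)=\{x\in(0,1]:d_1(x)=a_1,\dots,d_n(x)=a_n\}.$$
First I would show that $I_n(a_1,\dots,a_n)$ is a half-open interval of length $2^{-(a_1+\cdots+a_n)}$. Directly from \eqref{def} one checks that $I_1(a)=(2^{-a},2^{1-a}]$ and that the affine map $T_a\colon x\mapsto 2^{a}(x-2^{-a})$ carries $I_1(a)$ bijectively onto $(0,1]$, multiplies Lebesgue measure by $2^{a}$, and shifts the digit sequence, i.e. $d_k(T_ax)=d_{k+1}(x)$. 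Hence $I_n(a_1,\dots,a_n)=T_{a_1}^{-1}\bigl(I_{n-1}(a_2,\dots,a_n)\bigr)$, and an induction on $n$ delivers both the interval property and the length formula. The dyadic rationals (the points whose expansion \eqref{def} is the ``infinite'' one) form a countable, hence $\mathcal{L}$-null, set, so they can be ignored throughout.

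\emph{Step 2: independence and the size of the relevant events.} The length formula says
$$\mathcal{L}\bigl(I_n(a_1,\dots,a_n)\bigr)=\prod_{i=1}^{n}2^{-a_i}=\prod_{i=1}^{n}\mathcal{L}\bigl(\{d_i=a_i\}\bigr),$$
so $d_1,d_2,\dots$ are independent, each with $\mathcal{L}(\{d_i=k\})=2^{-k}$ and $\mathcal{L}(\{d_i\ge k\})=2^{1-k}$ for $k\in\mathbb{N}$. Setting $A_n=\{x\in(0,1]:d_n(x)\ge\phi(n)\}$ we have $F(\phi)=\limsup_{n\to\infty}A_n$ and $\mathcal{L}(A_n)=2^{1-\lceil\phi(n)\rceil}$; since $\phi(n)\le\lceil\phi(n)\rceil<\phi(n)+1$ this gives $2^{-\phi(n)}<\mathcal{L}(A_n)\le 2\cdot 2^{-\phi(n)}$, whence $\sum_{n}\mathcal{L}(A_n)$ and $\sum_{n}2^{-\phi(n)}$ converge or diverge simultaneously.

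\emph{Step 3: the dichotomy.} If $\sum_{n}2^{-\phi(n)}<\infty$, then $\sum_{n}\mathcal{L}(A_n)<\infty$ and the first Borel--Cantelli lemma yields $\mathcal{L}(F(\phi))=\mathcal{L}(\limsup_{n\to\infty}A_n)=0$. If $\sum_{n}2^{-\phi(n)}=\infty$, then $\sum_{n}\mathcal{L}(A_n)=\infty$; since each $A_n$ depends only on the single digit $d_n$ and the $d_n$ are independent, the events $A_n$ are independent, so the second Borel--Cantelli lemma gives $\mathcal{L}(F(\phi))=\mathcal{L}(\limsup_{n\to\infty}A_n)=1$. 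The only real work is in Step 1 --- proving that the digit cylinders are intervals of the stated length --- and that is a short induction once the shift map $T_a$ is written down; after that the theorem is immediate, so I anticipate no serious obstacle here, the heavier analysis of the paper lying rather in the Hausdorff-dimension and multifractal results that follow.
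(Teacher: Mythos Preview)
Your proof is correct, and the convergence half matches the paper exactly (first Borel--Cantelli on the events $A_n=\{d_n\ge\phi(n)\}$). The divergence half, however, is handled differently. The paper does \emph{not} invoke the second Borel--Cantelli lemma; instead it works on the complement, setting $J_{m,n}=\{x:d_{m+\ell}(x)<\phi(m+\ell)\text{ for }\ell=1,\dots,n\}$ and proving the recursive estimate $\mathcal{L}(J_{m,n+1})<(1-2^{-\phi(m+n+1)})\mathcal{L}(J_{m,n})$ directly from the cylinder length formula \eqref{|I_n|}, so that the product $\prod_\ell(1-2^{-\phi(m+\ell)})$ collapses to zero when the series diverges. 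This is the classical Borel--Bernstein argument one uses for continued fractions, where the partial quotients are \emph{not} independent and your Step~3 would be unavailable. Your route is shorter and cleaner here precisely because the $d_n$ \emph{are} independent---a fact the paper itself proves as Lemma~\ref{iid} but then does not exploit in the proof of Theorem~\ref{main theorem BB}. So: your argument buys simplicity by using a structural feature special to the P2DGL expansion; the paper's argument buys robustness, being the template that would survive in quasi-independent settings.
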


By Theorem \ref{main theorem BB}, many sets of points with restrictions on their digits typically have null Lebesgue measure. To further analyse the size and complexity of such null Lebesgue measure sets, one uses the notion of Hausdorff dimension. Neunh\"{a}userer \cite{neunhauserer2011hausdorff} proved that the set
$$\{x\in (0,1]: \text{the sequence $\{d_i(x)\}_{i\in \mathbb{N}}$ is bounded}\}$$
is of Lebesgue measure 0, but of Hausdorff dimension 1. Later in \cite{neunhauserer2022dimension}, the same author proved that
$$\dim_{\mathrm{H}}\{x\in(0,1]:\lim_{i\to \infty}d_i(x)=\infty\}=0,$$
where $\dim_{\mathrm{H}}$ stands for the Hausdorff dimension.
Parallel to the famous result of Wang--Wu \cite{wang2008hausdorff} on the continued fractions, we obtain the Hausdorff dimension of $F(\phi)$.

\begin{theorem}\label{main theorem im}
  Let $\phi:\mathbb{N}\to \mathbb{R}^{+}$ be an arbitrary positive function and $F(\phi)$ be defined as in (\ref{1111}).
 Denote $\alpha:=\liminf\limits_{n\to \infty}\frac{\phi(n)}{n}$. Then
  \begin{equation}
  \dim_{\mathrm{H}}F(\phi)=\begin{cases}
                             s(\alpha), & \mbox{if } 0\le \alpha<\infty \\
                             0, & \mbox{if } \alpha=+\infty,
                           \end{cases}
  \end{equation}
  where $s(\alpha)$ is the unique real solution of the equation $\sum\limits_{k=1}^{\infty}\left(\frac{1}{2^{\alpha}2^k}\right)^s=1$.
\end{theorem}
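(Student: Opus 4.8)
Write $S_n(x)=d_1(x)+\cdots+d_n(x)$. Everything rests on the elementary fact that the rank-$n$ cylinder $I(d_1,\dots,d_n)$ is an interval of length $2^{-S_n}$ and that inside it the rank-$(n+1)$ cylinders $I(d_1,\dots,d_n,j)$, $j\ge1$, have lengths $2^{-S_n}2^{-j}$ and tile it. With $g(s)=\sum_{k\ge1}2^{-sk}=(2^s-1)^{-1}$ one has $\sum_{k\ge1}(2^{-\alpha}2^{-k})^s=g(s)2^{-s\alpha}$, which is strictly decreasing in $s$ from $+\infty$ to $0$; hence $s(\alpha)$ is well defined and $g(s)2^{-s\alpha}\gtrless1\iff s\lessgtr s(\alpha)$. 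The plan is to obtain the upper bound from the canonical cover of this $\limsup$-type set, and the lower bound (for $\alpha<\infty$) from a Cantor subset of $F(\phi)$ carrying a measure to which the mass distribution principle applies, in the spirit of Wang--Wu \cite{wang2008hausdorff}; the case $\alpha=\infty$ will drop out of the upper-bound computation.

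\textbf{Upper bound.} Since $F(\phi)\subseteq\bigcup_{n\ge N}\{x:d_n(x)\ge\phi(n)\}$ for every $N$, and $\{x:d_n(x)\ge\phi(n)\}$ is the union of the rank-$n$ cylinders with $d_n\ge\lceil\phi(n)\rceil$, I would, for fixed $s>s(\alpha)$, bound the $s$-cost of this cover by
\[
\sum_{n\ge N}g(s)^{\,n-1}\!\!\sum_{d\ge\lceil\phi(n)\rceil}\!\!2^{-sd}\ \le\ \frac1{(1-2^{-s})g(s)}\sum_{n\ge N}\bigl(g(s)\,2^{-s(\alpha-\varepsilon)}\bigr)^{n},
\]
using $\phi(n)\ge(\alpha-\varepsilon)n$ for large $n$ (true since $\alpha=\liminf\phi(n)/n$). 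Choosing $\varepsilon$ so small that $g(s)2^{-s(\alpha-\varepsilon)}<1$ (possible as $g(s)2^{-s\alpha}<1$), the series converges and its tail tends to $0$ as $N\to\infty$; since the covering cylinders have diameter $\le2^{-N}$, this forces $\mathcal H^{s}(F(\phi))=0$, hence $\dim_{\mathrm H}F(\phi)\le s$, and then $s\downarrow s(\alpha)$. If $\alpha=\infty$ the same estimate works for every $s>0$ (take $\phi(n)\ge Mn$ eventually with $M$ so large that $g(s)2^{-sM}<1$), giving $\dim_{\mathrm H}F(\phi)=0$.

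\textbf{Lower bound: the Cantor set.} Fix $\alpha<\infty$ and $s<s(\alpha)$, so $g(s)2^{-s\alpha}>1$. I would fix $\varepsilon>0$ and $B\in\mathbb N$ with $g_B(s):=\sum_{k=1}^B2^{-sk}$ still satisfying $g_B(s)2^{-s(\alpha+\varepsilon)}>1$, then choose a subsequence $(n_k)$ with $\phi(n_k)/n_k\to\alpha$, thinned so aggressively that $\sum_{j<k}n_j=o(n_k)$, $k=o(n_k)$, and $\phi(n_k)\le(\alpha+\varepsilon/2)n_k$ for all $k$. Put $M_k=\lceil\phi(n_k)\rceil$; then $M_k\ge\phi(n_k)$ and $\sum_{k\le K}M_k\le(\alpha+\varepsilon)n_K$ for large $K$. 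Set
\[
E=\bigl\{x:\ d_{n_k}(x)=M_k\ (\forall k),\quad d_m(x)\in\{1,\dots,B\}\ (\forall m\notin\{n_k\})\bigr\}\subseteq F(\phi),
\]
and put on $E$ the probability measure $\mu$ making the free digits i.i.d.\ with $\mathbb P(d_m=j)=2^{-sj}/g_B(s)$ and the digits $d_{n_k}$ deterministic; consistency across levels is immediate, so $\mu$ extends.

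\textbf{Lower bound: the mass estimate and conclusion.} For a cylinder $I_n$ with $\mu(I_n)>0$, writing $K=\#\{k:n_k\le n\}$ and $f=n-K$, one computes
\[
\mu(I_n)=\frac{|I_n|^{s}\,2^{\,s\sum_{k\le K}M_k}}{g_B(s)^{\,f}},\qquad\text{hence}\qquad\frac{\mu(I_n)}{|I_n|^{\,t}}\ \le\ 2^{\,t\sum_{k\le K}M_k}\bigl(g_B(s)2^{\,s-t}\bigr)^{-f}\quad(0<t<s),
\]
using $S_n\ge f+\sum_{k\le K}M_k$. As $g_B(s)2^{s-t}>1$, the right side decreases in $n$ for fixed $K$, so its supremum is attained at $n=n_K$, where $f=(1-o(1))n_K$ and $\sum_{k\le K}M_k\le(\alpha+\varepsilon)n_K$; for $t$ close enough to $s$ (and the small parameters suitably ordered) the exponent then becomes negative exactly because $g_B(s)2^{-s(\alpha+\varepsilon)}>1$ — this is the crux that cancels the otherwise divergent factor $2^{s\sum_k M_k}$ — yielding $\mu(I_n)\le C|I_n|^{t}$ for every cylinder. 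The main technical obstacle is converting this into $\mu(B(x,r))\le C'r^{t}$: for $|I_{n+1}(x)|\le r<|I_n(x)|$ the ball meets only boundedly many relevant cylinders whose $\mu$-masses decay geometrically like $2^{-sj}$, and the delicate subcase is the large gap created by a special coordinate $n+1=n_{k+1}$, where $I_n(x)$ has a single child of length $|I_n(x)|2^{-M_{k+1}}\le r$ and one must still absorb the mass of neighbouring cylinders — done by the same geometric bookkeeping, recursing one level if necessary. The mass distribution principle then gives $\dim_{\mathrm H}E\ge t$, hence $\dim_{\mathrm H}F(\phi)\ge t$; letting $t\uparrow s$ and $s\uparrow s(\alpha)$, and combining with the upper bound, finishes the proof.
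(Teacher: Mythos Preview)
Your proposal is correct and follows essentially the same strategy as the paper: the canonical cover gives the upper bound via the pressure identity defining $s(\alpha)$, and the lower bound comes from a Cantor subset with bounded digits at ``free'' positions, large digits along a very sparse subsequence $(n_k)$, a geometric-type product measure on the free coordinates, and the mass distribution principle. The only differences are organizational --- the paper first treats the linear case $\phi(n)=\alpha n$ (and its subsequence variant, Corollary~\ref{cor4.1}) and then sandwiches the general $\phi$ using the continuity of $s(\cdot)$, whereas you work with $\phi$ directly; and the paper lets $d_{n_k}$ range over $[\lfloor\alpha n_k\rfloor+1,\lfloor(1+1/k)\alpha n_k\rfloor]$ with uniform mass rather than fixing it to a single value, which alters the gap bookkeeping in the ball estimate (the paper's Section~5.2.3 is the detailed version of what you flag as ``the main technical obstacle'') but not the substance of the argument.
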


The image of the function $\alpha\mapsto s(\alpha)$ is illustrated in Figure 1.

  \begin{figure}[H]
\centering
\includegraphics[width=0.5\textwidth]{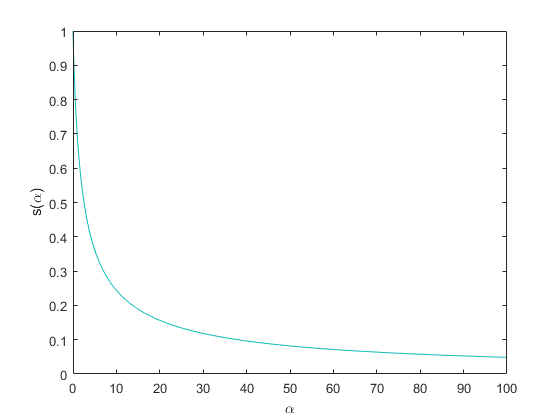}
\caption{The image of the function $\alpha\mapsto s(\alpha)$.}
\end{figure}

 To deepen our understanding of P2DGL expansions, we also investigate the maximal digits in P2DGL expansions. Define
$$L_n(x)=\max\{d_i(x):1\le i\le n\},$$
as the maximal digit among the first $n$ digits in the P2DGL expansion of $x$. 

There have been extensive studies and applications of the maximal digits for the continued fractions and the L{\"u}roth expansions. See \cite{philipp1976conjecture, wu2009distribution, liao2016subexponentially, fang2021largest} (respectively, \cite{galambos1976rate, shen2014note, song2017level}) and references therein for more
results on the maximal digits for the continued fraction (respectively, L{\"u}roth expansions).

The following theorem describes the growth rate of $L_n(x)$ in P2DGL expansion for typical $x$. 

\begin{theorem}\label{main theorem 1}
  For almost all $x\in (0,1]$,
  $$\lim\limits_{n\to \infty} \frac{L_n(x)}{\log_2 n}=1.$$
\end{theorem}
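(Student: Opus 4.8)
My plan is to reduce the statement to a classical extreme-value estimate for i.i.d.\ geometric variables and then apply Borel--Cantelli in both directions. The first step is the observation that, under $\mathcal L$, the digit sequence $(d_i(x))_{i\ge1}$ is i.i.d.: the cylinder $I_n(a_1,\dots,a_n)=\{x:d_i(x)=a_i,\,1\le i\le n\}$ is an interval of length $2^{-(a_1+\cdots+a_n)}$ (as used to prove Theorem~\ref{main theorem BB}), so summing over $a_n$ gives $\mathcal L(I_{n-1}(a_1,\dots,a_{n-1}))=2^{-(a_1+\cdots+a_{n-1})}$ and hence $\mathcal L(d_n=k\mid d_1=a_1,\dots,d_{n-1}=a_{n-1})=2^{-k}$ independently of $a_1,\dots,a_{n-1}$. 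Thus the $d_i$ are independent with $\mathcal L(d_1=k)=2^{-k}$, and $L_n(x)$ is distributed as the maximum of $n$ geometric random variables with parameter $\tfrac12$; from here the assertion $L_n/\log_2 n\to1$ a.e.\ is a textbook fact about such maxima, which I would either cite or prove by the estimates below.

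For the upper bound I would fix $\varepsilon>0$ and take $\phi(n)=(1+\varepsilon)\log_2 n$, so that $\sum_n2^{-\phi(n)}=\sum_n n^{-(1+\varepsilon)}<\infty$; by Theorem~\ref{main theorem BB}, $\mathcal L(F(\phi))=0$, i.e.\ for $\mathcal L$-a.e.\ $x$ there is $N(x)$ with $d_n(x)<(1+\varepsilon)\log_2 n$ for all $n\ge N(x)$. For such $x$ and $n\ge N(x)$,
\begin{equation*}
L_n(x)=\max\Big\{L_{N(x)}(x),\ \max_{N(x)<i\le n}d_i(x)\Big\}\le\max\{L_{N(x)}(x),\ (1+\varepsilon)\log_2 n\},
\end{equation*}
so $\limsup_{n\to\infty}L_n(x)/\log_2 n\le1+\varepsilon$; intersecting the corresponding full-measure sets over $\varepsilon=1/k$, $k\in\mathbb N$, gives $\limsup_{n\to\infty}L_n(x)/\log_2 n\le1$ for $\mathcal L$-a.e.\ $x$.

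For the lower bound I would fix $\varepsilon\in(0,1)$ and $\psi(n)=(1-\varepsilon)\log_2 n$, and estimate directly, using independence together with the fact that $\{d_1\ge\psi(n)\}=\{d_1\ge\lceil\psi(n)\rceil\}$ has measure $2^{-(\lceil\psi(n)\rceil-1)}\ge2^{-\psi(n)}$:
\begin{equation*}
\mathcal L\big(L_n<\psi(n)\big)=\mathcal L\big(d_1<\psi(n)\big)^{n}\le\big(1-2^{-\psi(n)}\big)^{n}=\big(1-n^{-(1-\varepsilon)}\big)^{n}\le e^{-n^{\varepsilon}}.
\end{equation*}
Since $\sum_n e^{-n^{\varepsilon}}<\infty$, the convergence part of the Borel--Cantelli lemma yields $L_n(x)\ge(1-\varepsilon)\log_2 n$ for all large $n$, for $\mathcal L$-a.e.\ $x$, hence $\liminf_{n\to\infty}L_n(x)/\log_2 n\ge1-\varepsilon$; letting $\varepsilon=1/k\downarrow0$ and combining with the upper bound proves the theorem.

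I expect the only genuinely delicate point to be the lower bound. The divergence half of Theorem~\ref{main theorem BB} only guarantees that, for a.e.\ $x$, $d_n(x)\ge(1-\varepsilon)\log_2 n$ holds for \emph{infinitely many} $n$, and since those indices can be very sparse this does not by itself control $L_n(x)$ at \emph{every} large $n$; the exponential tail bound above, which rests on the full independence of the digits established in the first step, is what repairs this. The rest is bookkeeping: mild care with the floor/ceiling when passing between the real threshold $\psi(n)$ and the integer-valued digits, and using the monotonicity of $n\mapsto L_n$ to upgrade the pointwise-in-$n$ estimates to a statement about the limit along all $n$.
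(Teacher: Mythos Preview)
Your argument is correct. The upper bound is essentially the paper's, only with the coarser threshold $(1+\varepsilon)\log_2 n$ in place of $\log_2 n+t\log_2\log n$; either choice suffices for Theorem~\ref{main theorem 1}. The lower bound, however, is handled differently. The paper obtains the stronger statement $\liminf_n (L_n(x)-\log_2 n)/\log_2\log n=0$ a.e.\ by appealing to Barndorff-Nielsen's criterion (Theorem~\ref{theorem OBN}) for when $\{L_n\le\lambda_n\ \text{i.o.}\}$ has probability $0$ or $1$, and then reads off $L_n/\log_2 n\to1$ as a corollary. You bypass this by the direct tail bound $\mathcal L(L_n<(1-\varepsilon)\log_2 n)\le e^{-n^\varepsilon}$ and first Borel--Cantelli, which is more elementary and entirely self-contained but does not recover the second-order $\log_2\log n$ information that the paper records in Propositions~\ref{coro3} and~\ref{maintheorem22}. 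For the purpose of Theorem~\ref{main theorem 1} alone, your route is shorter; the paper's route yields finer asymptotics at the cost of invoking an external zero--one law.
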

This indicates that for almost all $x$, the largest digits of $x$ tends to infinity with a logarithmic rate. However, it is evident that there exist points for which the digits can be arbitrarily large. This prompts the question of whether it is possible for some points $x$ that $L_n(x)$ can tend to infinity with other given orders. 

Let $r>0$ and $\alpha>0$. We are interested in the Hausdorff dimension of the sets
$$E(r,\alpha)=  \left\{x\in (0,1]:\lim_{n\to \infty}\frac{L_n(x)}{n^r}=\alpha\right\},$$
and $$\overline{E}(r,\alpha)=  \left\{x\in (0,1]:\limsup_{n\to \infty}\frac{L_n(x)}{n^r}=\alpha\right\}.$$
We have the following results.
\begin{theorem}\label{main theorem 2}
  Let $r>0$ and $\alpha >0$. Then 
  $$
    \dim_{\mathrm{H}}E(r,\alpha)=\begin{cases}
                                            1, & \mbox{if}~~r< 1 \\
                                            0, & \mbox{if}~~r\ge 1.
                                          \end{cases}
  $$
\end{theorem}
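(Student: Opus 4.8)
The plan is to treat the regimes $r<1$ and $r\ge 1$ separately: in the first the dimension is $1$ and only a lower bound needs work, in the second it is $0$ and only an upper bound needs work. The common device is the elementary observation that if $x\in E(r,\alpha)$ and $n_1<n_2<\cdots$ are the \emph{record times} of $x$ (those $n$ with $d_n(x)=L_n(x)>L_{n-1}(x)$, so that $d_{n_k}(x)=L_{n_k}(x)$), then $L_n$ is constant on $[n_k,n_{k+1})$; hence $L_n(x)/n^r\to\alpha$ forces simultaneously $d_{n_k}(x)/n_k^{\,r}\to\alpha$ and $n_{k+1}/n_k\to1$. In particular the records become asymptotically dense, which drives both halves of the proof.

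\medskip
\noindent\textbf{Upper bound, $r\ge1$.} When $r>1$ there is a shortcut: $d_{n_k}(x)\ge\tfrac\alpha2 n_k^{\,r}$ for all large $k$ and there are infinitely many records, so $E(r,\alpha)\subseteq F(\phi)$ with $\phi(n)=\tfrac\alpha2 n^{\,r}$ and $\liminf_n\phi(n)/n=+\infty$, whence $\dim_{\mathrm H}E(r,\alpha)=0$ by Theorem \ref{main theorem im}. For $r=1$ this only gives $\dim_{\mathrm H}E(1,\alpha)\le s(\alpha/2)$, and a genuinely different argument is required; \textbf{this is the main obstacle}. I would fix $s\in(0,1)$ and exploit $n_{k+1}/n_k\to1$ by choosing a scale ratio $1+\delta$ with $\delta=\delta(s)>0$ \emph{small}: with $N_t=\lceil(1+\delta)^t\rceil$, $B_t=(N_t,N_{t+1}]\cap\mathbb N$ and $m_t=\lfloor\tfrac\alpha3 N_t\rfloor$, every $x\in E(1,\alpha)$ eventually has a record in each $B_t$ (as $d_{n_k}(x)\ge\tfrac\alpha2 n_k\ge m_t$ there), so
\[
E(1,\alpha)\ \subseteq\ \bigcup_{t_0\ge1}\ C_{t_0},\qquad C_{t_0}:=\bigcap_{t\ge t_0}\bigl\{x:\exists\,j\in B_t,\ d_j(x)\ge m_t\bigr\}.
\]
For fixed $t_0$ and $T\ge t_0$ I cover $C_{t_0}$ by the depth-$N_{T+1}$ cylinders compatible with the conditions for $t_0\le t\le T$. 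Since the blocks $B_{t_0},\dots,B_T$ (together with $\{1,\dots,N_{t_0}\}$) partition $\{1,\dots,N_{T+1}\}$ and carry independent constraints, a union bound over the witnessing position in each block yields
\[
\sum_{I}|I|^{s}\ \le\ \Bigl(\tfrac1{2^{s}-1}\Bigr)^{N_{T+1}}\prod_{t=t_0}^{T}\bigl(|B_t|\,2^{s}\,2^{-s m_t}\bigr).
\]
Because $\sum_{t\le T}m_t\asymp N_{T+1}/\delta$ while $\sum_{t\le T}\log|B_t|=O\bigl((\log N_{T+1})^2\bigr)=o(N_{T+1})$, the right side is $2^{\,N_{T+1}\left(\log_2\frac1{2^{s}-1}-cs/\delta+o(1)\right)}$; choosing $\delta$ small (depending on $s$) forces the exponent to $-\infty$, and the cylinders have diameter $\le 2^{-N_{T+1}}\to0$. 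Hence $\mathcal H^{s}(C_{t_0})=0$, so $\dim_{\mathrm H}C_{t_0}\le s$, $\dim_{\mathrm H}E(1,\alpha)\le s$, and letting $s\downarrow0$ gives $\dim_{\mathrm H}E(1,\alpha)=0$. The essential point is that $\delta$ must depend on $s$: a fixed covering scale fails for small $s$, since resolving the $\approx N_{T+1}$ free small digits between spikes contributes the factor $(2^{s}-1)^{-N_{T+1}}$, which only the thinness of the blocks—itself forced by $n_{k+1}/n_k\to1$—can defeat. In fact the same computation works verbatim for every $r\ge1$, so one may handle $r\ge1$ uniformly this way.

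\medskip
\noindent\textbf{Lower bound, $r<1$.} Only $\dim_{\mathrm H}E(r,\alpha)\ge1$ needs proof (the reverse is trivial). Fix $\beta>1/(1-r)$ and a large integer $M$, put $n_k=\lfloor k^{\beta}\rfloor$ (strictly increasing for large $k$, with $n_{k+1}/n_k\to1$), and let $E_M$ be the set of $x$ with $d_{n_k}(x)=\max\bigl(1,\lfloor\alpha n_k^{\,r}\rfloor\bigr)$ for all $k$ and $d_i(x)\in\{1,\dots,M\}$ otherwise. Since $n_{k+1}/n_k\to1$, one checks $E_M\subseteq E(r,\alpha)$: the forced digits eventually dominate the bounded free digits, so $L_n(x)=\lfloor\alpha n_{k(n)}^{\,r}\rfloor$ with $n_{k(n)}/n\to1$, whence $L_n(x)/n^{r}\to\alpha$. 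On $E_M$ I would place the Bernoulli-type measure giving a cylinder mass $\prod 2^{-s_M d_i(x)}$ over its free coordinates, where $s_M$ solves $\sum_{k=1}^{M}2^{-s_M k}=1$; because $\sum_{n_k\le n}\lfloor\alpha n_k^{\,r}\rfloor=O\bigl(n^{\,r+1/\beta}\bigr)=o(n)$, the digit sum of a depth-$n$ cylinder exceeds its free part by only $o(n)$, so $\mu(I)\le|I|^{\,s_M-\eta}$ for any $\eta>0$ once $|I|$ is small. The mass distribution principle then gives $\dim_{\mathrm H}E_M\ge s_M$; since $s_M\uparrow1$ as $M\to\infty$ (because $\sum_{k\ge1}2^{-k}=1$), we conclude $\dim_{\mathrm H}E(r,\alpha)\ge1$.
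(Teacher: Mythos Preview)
Your proposal is correct, and the overall architecture matches the paper's: for $r>1$ you both invoke Theorem \ref{main theorem im} via the inclusion into $F(\phi)$; for $r<1$ you both build a Cantor subset by prescribing spikes $d_{n_k}=\lfloor\alpha n_k^{r}\rfloor$ along a sparse sequence $n_k\asymp k^{\beta}$ with $r+1/\beta<1$ and bounding the remaining digits by $M$; and for $r=1$ you both exploit the fact that the \emph{limit} (not merely a $\limsup$) forces the record times to be asymptotically dense, which produces enough independent large digits to kill the $s$-sum for every $s>0$. The genuinely different choices are in the execution. For the $r=1$ upper bound, the paper uses \emph{arithmetic} blocks $\{(k-1)N+1,\dots,kN\}$, $1\le k\le M$, and the clean inequality $(1+\tfrac1{2M+1})(k-1)<(1-\tfrac1{2M+1})k$ to force a fresh maximum in each block, then lets $M\to\infty$; you use \emph{geometric} blocks $(N_t,N_{t+1}]$ with ratio $1+\delta$ and let $\delta\downarrow0$. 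These are equivalent in spirit (your $\delta\to0$ is their $M\to\infty$), but your record-time heuristic makes the mechanism (``$n_{k+1}/n_k\to1$'') more transparent, while the paper's arithmetic partition makes the covering combinatorics slightly tidier. For the $r<1$ lower bound, the paper avoids running the mass distribution principle directly: it defines the digit-deletion map $f:E_M(r,\alpha)\to E_M$ (erase the forced coordinates), shows $f$ is $1/(1+\epsilon)$-H\"older, and reads off $\dim_{\mathrm H}E_M(r,\alpha)\ge\frac1{1+\epsilon}\dim_{\mathrm H}E_M=\frac1{1+\epsilon}\log_2 s_M$ from the known value of $\dim_{\mathrm H}E_M$. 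This sidesteps the need to pass from cylinder estimates to ball estimates, which your sketch leaves implicit (you state $\mu(I)\le|I|^{s_M-\eta}$ for cylinders $I$, but the mass distribution principle needs balls; in this system this is routine via the gap structure of adjacent basic intervals, but the paper's H\"older-map trick is a cleaner shortcut). Either route gives the same conclusion.
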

\begin{corollary}\label{coro1.1}
  Let $\alpha >0$. Then
  $$\dim_{\mathrm{H}}\Big\{x\in (0,1]:\lim_{n\to \infty}\frac{L_n(x)}{\log_2 n}=\alpha\Big\}=1.$$
\end{corollary}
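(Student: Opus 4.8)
Write $A_\alpha=\{x\in(0,1]:\lim_{n\to\infty}L_n(x)/\log_2 n=\alpha\}$ for the set in the statement. Since $A_\alpha\subseteq(0,1]$ the inequality $\dim_{\mathrm H}A_\alpha\le 1$ is automatic, and for $\alpha=1$ the reverse inequality is immediate, because $A_1$ has full Lebesgue measure by Theorem \ref{main theorem 1}; for $\alpha\ne 1$ the set $A_\alpha$ is Lebesgue-null (again by Theorem \ref{main theorem 1}), so a genuine dimension estimate is needed. The plan for general $\alpha$ is to recycle the construction behind the case $r<1$ of Theorem \ref{main theorem 2}: for each large integer $N$ I will build a Cantor-like set $E_N\subseteq A_\alpha$ whose digits, outside a set of indices of density zero, range freely over $\{1,\dots,N\}$, and show $\dim_{\mathrm H}E_N\ge s_N$, where $s_N$ is the unique root of $\sum_{k=1}^{N}2^{-ks}=1$. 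Because $\sum_{k\ge1}2^{-k}=1$, the numbers $s_N$ increase to $1$ as $N\to\infty$, and hence $\dim_{\mathrm H}A_\alpha\ge\sup_N s_N=1$.

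Recall that the rank-$n$ cylinder $[a_1,\dots,a_n]=\{x:d_i(x)=a_i,\ 1\le i\le n\}$ is a half-open interval of length $2^{-(a_1+\cdots+a_n)}$. Fix $N$, set $n_k=2^k$, and let $k_0=k_0(\alpha,N)$ be the least integer with $\lceil\alpha k_0\rceil>N$. Let $E_N$ be the set of $x\in(0,1]$ with $d_{n_k}(x)=\lceil\alpha k\rceil$ for every $k\ge k_0$ and $d_i(x)\in\{1,\dots,N\}$ for every index $i\notin\{n_k:k\ge k_0\}$. If $x\in E_N$ and $2^k\le n<2^{k+1}$ with $k\ge k_0$, then among $d_1(x),\dots,d_n(x)$ precisely the indices $n_{k_0},\dots,n_k$ carry a digit exceeding $N$, so, as $j\mapsto\lceil\alpha j\rceil$ is nondecreasing, $L_n(x)=\lceil\alpha k\rceil=\lceil\alpha\lfloor\log_2 n\rfloor\rceil$; consequently $L_n(x)/\log_2 n\to\alpha$, i.e.\ $E_N\subseteq A_\alpha$.

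For $\dim_{\mathrm H}E_N$ I would put on $E_N$ the Bernoulli-type measure $\mu_N$ under which the free digits $(d_i(x))_{i\notin\{n_k\}}$ are independent with $\mathbb P(d_i=j)=2^{-js_N}$ for $1\le j\le N$ — a probability vector precisely because $s_N$ solves $\sum_{k=1}^{N}2^{-ks}=1$ — the record digits being deterministic. Then $\mu_N([d_1,\dots,d_n])=\prod_{i\le n,\ i\notin\{n_k\}}2^{-d_i s_N}$ while $|[d_1,\dots,d_n]|=2^{-(d_1+\cdots+d_n)}$, and the record indices $\le n$ contribute only $\sum_{k\le\log_2 n}\lceil\alpha k\rceil=O((\log n)^2)=o(n)$ to the exponent $d_1+\cdots+d_n$. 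Applying the strong law of large numbers to the i.i.d.\ free digits then gives $\lim_{n\to\infty}\frac{\log\mu_N([d_1,\dots,d_n])}{\log|[d_1,\dots,d_n]|}=s_N$ for $\mu_N$-almost every $x$ (the $o(n)$ record contribution being negligible against $\sum_{i\le n,\ \mathrm{free}}d_i\sim n\,\mathbb E[d_1]$), and the mass distribution principle (Billingsley's lemma) yields $\dim_{\mathrm H}E_N\ge s_N$. Letting $N\to\infty$ finishes the proof. This is a lighter version of the $r<1$ half of Theorem \ref{main theorem 2}; the only point needing care is the one just used — that the sparse record digits add only $o(n)$ to the cylinder exponents, hence do not depress the local dimension of $\mu_N$ — which is automatic here since the records sit at the geometrically spaced positions $2^k$. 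I expect no real obstacle beyond this bookkeeping.
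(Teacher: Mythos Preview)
Your argument is correct and reaches the same endpoint $\dim_{\mathrm H}E_N\ge s_N\uparrow 1$, but the route differs from the paper's in two ways worth noting. First, the paper builds no measure on the Cantor set at all: having already constructed, for the case $r<1$ of Theorem~\ref{main theorem 2}, the $\frac{1}{1+\epsilon}$-H\"older ``delete the record digits'' map $f$ onto $E_M$, it simply observes that with $d_{n_k}=\lfloor\alpha\log n_k\rfloor$ in place of $\lfloor\alpha n_k^r\rfloor$ the two estimates (\ref{compare}) and (\ref{min}) persist, and then reads off $\dim_{\mathrm H}E_M(\cdot)\ge\frac{1}{1+\epsilon}\dim_{\mathrm H}E_M=\frac{1}{1+\epsilon}\log_2 s_M$ from Lemma~\ref{lemmasM1}. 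Second, the paper keeps the polynomial spacing $n_k=k^t$ inherited from the $r<1$ proof, whereas you take the geometric spacing $n_k=2^k$, which is the natural choice here since $\log_2 n_k=k$ makes the record values exactly $\lceil\alpha k\rceil$. Your approach is more self-contained (the Bernoulli measure with weights $2^{-js_N}$ effectively reproves the dimension of $E_M$ rather than quoting it), while the paper's is shorter because it recycles the H\"older-map machinery verbatim. The one place your sketch is thinner is the passage from cylinder local dimension to ball local dimension: the paper sidesteps this entirely via the H\"older map, while you invoke ``Billingsley's lemma'' on cylinders. In your setting the fix is routine---the cylinder ratio $|I_n(x)|/|I_{n+1}(x)|$ is at most $2^N$ except at record times $n+1=2^k$, where it is $2^{\lceil\alpha k\rceil}=|I_n(x)|^{-o(1)}$, so the discrepancy does not move the local dimension---but it is worth saying this explicitly rather than folding it into the citation.
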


\begin{theorem}\label{main theorem 3}
  Let $r>0$, $\alpha >0$ and $s(\alpha)$ be defined as in Theorem \ref{main theorem im}. Then
  $$
    \dim_{\mathrm{H}}\overline{E}(r,\alpha)=\begin{cases}
                                            1, & \mbox{if}~~r< 1 \\
                                            s(\alpha), & \mbox{if}~~r= 1 \\
                                            0, & \mbox{if}~~r> 1.
                                          \end{cases}
  $$
\end{theorem}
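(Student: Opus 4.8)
\textbf{Proof proposal for Theorem~\ref{main theorem 3}.}
The plan is to split on the value of $r$ and to reduce, as far as possible, to Theorems~\ref{main theorem im} and~\ref{main theorem 2}. Two inclusions carry most of the weight. First, a sequence converging to $\alpha$ has $\limsup$ equal to $\alpha$, so $E(r,\alpha)\subseteq\overline E(r,\alpha)$; combined with Theorem~\ref{main theorem 2} this already gives $\dim_{\mathrm H}\overline E(r,\alpha)=1$ when $r<1$ (the upper bound $\le 1$ being trivial) and $\dim_{\mathrm H}\overline E(r,\alpha)\ge 0$ when $r>1$. Second, if $x\in\overline E(r,\alpha)$ then for each fixed $\varepsilon\in(0,\alpha)$ there are infinitely many $n$ with $L_n(x)\ge(\alpha-\varepsilon)n^r$; writing $L_n(x)=d_{i(n)}(x)$ with $i(n)\le n$ gives $d_{i(n)}(x)\ge(\alpha-\varepsilon)i(n)^r$, and since $(\alpha-\varepsilon)n^r\to\infty$ the indices $i(n)$ assume infinitely many values, whence $x\in F(\phi_{r,\varepsilon})$ with $\phi_{r,\varepsilon}(n):=(\alpha-\varepsilon)n^r$. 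So $\overline E(r,\alpha)\subseteq F(\phi_{r,\varepsilon})$ for every such $\varepsilon$.

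The upper bounds now follow from Theorem~\ref{main theorem im}. For $r>1$, $\liminf_n\phi_{r,\varepsilon}(n)/n=\liminf_n(\alpha-\varepsilon)n^{r-1}=+\infty$, so $\dim_{\mathrm H}F(\phi_{r,\varepsilon})=0$ and hence $\dim_{\mathrm H}\overline E(r,\alpha)=0$. For $r=1$, $\liminf_n\phi_{1,\varepsilon}(n)/n=\alpha-\varepsilon$, so $\dim_{\mathrm H}\overline E(1,\alpha)\le s(\alpha-\varepsilon)$; since the map $\beta\mapsto s(\beta)$ is continuous and strictly decreasing (apply the implicit function theorem to $\sum_{k\ge1}2^{-(\beta+k)t}=1$, whose $t$-derivative is strictly negative), letting $\varepsilon\downarrow 0$ yields $\dim_{\mathrm H}\overline E(1,\alpha)\le s(\alpha)$. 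Everything then reduces to the lower bound $\dim_{\mathrm H}\overline E(1,\alpha)\ge s(\alpha)$.

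For this I would reuse, or rebuild from scratch, the Cantor construction behind the lower bound in Theorem~\ref{main theorem im} for $\phi(n)=\alpha n$. Fix a large $M\in\mathbb N$ and a very rapidly increasing sequence $(n_k)$ (for instance $n_k=2^{2^k}$), and put
\begin{equation*}
\mathcal C_M=\Big\{x\in(0,1]:\ d_{n_k}(x)=\lceil\alpha n_k\rceil\ \text{for all }k,\quad 1\le d_i(x)\le M\ \text{for all }i\notin\{n_k\}\Big\}.
\end{equation*}
Then $\mathcal C_M\subseteq\overline E(1,\alpha)$: along $n=n_k$ one has $L_{n_k}(x)/n_k=\lceil\alpha n_k\rceil/n_k\to\alpha$, while for $n_k\le n<n_{k+1}$ and $k$ large $L_n(x)\le\max\{M,\lceil\alpha n_k\rceil\}\le\alpha n\,(1+o(1))$, so $\limsup_n L_n(x)/n=\alpha$. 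To bound $\dim_{\mathrm H}\mathcal C_M$ from below I place on $\mathcal C_M$ the Bernoulli-type measure $\mu$ that on every level distributes mass over the admissible digits $j$ with weights proportional to $2^{-s(\alpha)j}$; then $\mu(I_n(x))=|I_n(x)|^{s(\alpha)}\,Z^{-\#\{i\le n:\,i\notin\{n_k\}\}}\,2^{\,s(\alpha)\sum_{k:\,n_k\le n}\lceil\alpha n_k\rceil}$, where $Z=\sum_{j=1}^M2^{-s(\alpha)j}$. The defining equation of $s(\alpha)$ gives $\sum_{j\ge1}2^{-s(\alpha)j}=2^{\alpha s(\alpha)}$, hence $\log_2 Z=\alpha s(\alpha)-\delta_M$ with $\delta_M\downarrow 0$ as $M\to\infty$; using this, the near-exponential sparsity of $(n_k)$ (so that $\#\{k:n_k\le n\}=o(n)$ and $\sum_{k:\,n_k\le n}n_k=n_{K(n)}(1+o(1))$), and the bound $-\log_2|I_n(x)|=\sum_{i\le n}d_i(x)\ge(n-K(n))+\alpha n_{K(n)}$, one checks that $\log_2\mu(I_n(x))/\log_2|I_n(x)|\ge s(\alpha)-C\delta_M$ for all large $n$, uniformly over cylinders meeting $\mathcal C_M$. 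The mass distribution principle then gives $\dim_{\mathrm H}\overline E(1,\alpha)\ge\dim_{\mathrm H}\mathcal C_M\ge s(\alpha)-C\delta_M$ for every $M$, and $M\to\infty$ finishes the proof.

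The delicate point is exactly this last lower bound: the local-dimension bound for $\mu$ must hold \emph{uniformly over all cylinders}, not merely $\mu$-almost everywhere, and this is what forces the weights $2^{-s(\alpha)j}$ (the uniform measure would be dragged down by cylinders carrying large digits) together with a sparsity of $(n_k)$ strong enough that the ``spike'' levels $n_k$---each of which multiplies the cylinder length by roughly $2^{-\alpha n_k}$---contribute only a vanishing error to $\log\mu(I_n)/\log|I_n|$; the passage from cylinders to balls in the mass distribution principle is routine here, using that within each P2DGL cylinder the sub-cylinders accumulate at its left endpoint with geometrically decaying $\mu$-measure. The remaining ingredients---the inclusion $\overline E(r,\alpha)\subseteq F(\phi_{r,\varepsilon})$ and the appeals to Theorems~\ref{main theorem im} and~\ref{main theorem 2}---are straightforward.
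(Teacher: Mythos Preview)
Your proposal is correct and follows essentially the same route as the paper: the case $r<1$ via $E(r,\alpha)\subset\overline E(r,\alpha)$ and Theorem~\ref{main theorem 2}, the upper bounds for $r\ge1$ via the inclusion $\overline E(r,\alpha)\subset F((\alpha-\varepsilon)n^r)$ (this is exactly Lemma~\ref{use3}) together with Theorem~\ref{main theorem im}, and the lower bound at $r=1$ via a Cantor set with bounded digits except for prescribed spikes $d_{n_k}\approx\alpha n_k$ along a sparse sequence. The only cosmetic differences are that the paper reuses verbatim the set $F_M(\alpha)$ and the measure from Proposition~\ref{lemma2} (with exponent $s_M(\alpha)$ and a \emph{range} of spike values), whereas you take a single spike value and weights $2^{-s(\alpha)j}$; both variants give $\dim_{\mathrm H}\ge s_M(\alpha)\to s(\alpha)$, and the cylinder-to-ball passage you call routine is carried out in the paper in Sections~5.2.3--5.2.4.
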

Theorems \ref{main theorem 2} and \ref{main theorem 3} show that when $0<r<1$, $E(r,\alpha)$ and $\overline{E}(r, \alpha)$ have Hausdorff dimension of 1. However, when $r\geq 1$, both $E(r,\alpha)$ and $\overline{E}(r, \alpha)$ have Hausdorff dimension of 0. It is interesting that the dimension of $E(r,\alpha)$ undergoes a jump from 1 to 0 at $r=1$, while an intriguing change in the Hausdorff dimension of $\overline{E}(r, \alpha)$ occurs when $r = 1$.

\subsection*{Structure of the paper:}
\begin{enumerate}[$\bullet$]
  \item In the next section, we compile some basic facts of the P2DGL expansion which will be used later.
  \item  We prove Theorems \ref{main theorem BB} and \ref{main theorem 1} in Sections 3 and 4.
  \item Section 5 provides a detailed proof of Theorem  \ref{main theorem im}.
  \item  Sections 6 and 7 are devoted to proving Theorems \ref{main theorem 2} and \ref{main theorem 3}.
\end{enumerate}

Throughout the paper we denote by $|\cdot|$ the diameter of a subset of $I$, by $\lfloor x \rfloor$ the integer part of $x$, by $\lceil x\rceil$ the smallest integer larger than $x$, by $\#$ the cardinality of a set and by $\mathcal{H}^s$ the $s$-dimensional Hausdorff measure.

\section{Preliminaries}
In this section, we recall some fundamental results on P2DGL expansion, and give several lemmas crucial for the subsequent proofs.

Define an interval map $T:(0,1]\to (0,1]$ as
 \begin{equation}\label{T}
   Tx=2^nx-1,\qquad \text{if}~x\in (1/2^n,1/2^{n-1}],~~n\in \mathbb{N}.
 \end{equation}
 The map $T$ is illustrated in Figure 2.

 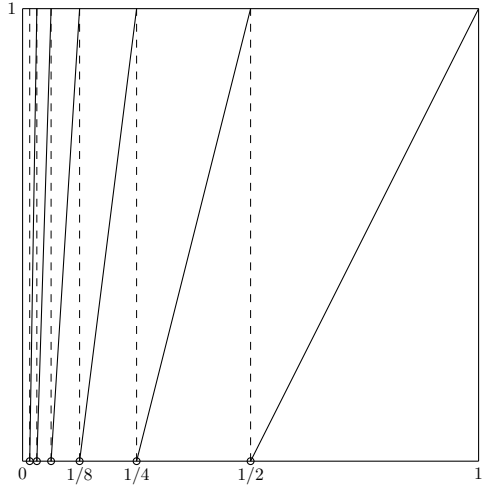
\begin{figure}[h]
   \centering
\begin{tikzpicture}[scale=1.5]
\tikzstyle{every node}=[scale=0.6]
\draw[black] (0,0) -- (0,4);
\draw[black] (4,0) -- (4,4);
\draw[black] (0,4) -- (4,4);
\draw[black] (0,0) -- (4,0);
\draw[black] (2,0) -- (4,4);
\draw[dashed,very thin] (2,0) -- (2,4);
\draw (2,0) circle (0.03);
\draw[black] (1,0) -- (2,4);
\draw[dashed,very thin] (1,0) -- (1,4);
\draw (1,0) circle (0.03);
\draw[black] (1/2,0) -- (1,4);
\draw[dashed,very thin] (1/2,0) -- (1/2,4);
\draw (1/2,0) circle (0.03);
\draw[black] (1/4,0) -- (1/2,4);
\draw[dashed,very thin] (1/4,0) -- (1/4,4);
\draw (1/4,0) circle (0.03);
\draw[black] (1/8,0) -- (1/4,4);
\draw[dashed,very thin] (1/8,0) -- (1/8,4);
\draw (1/8,0) circle (0.03);
\draw[black] (1/16,0) -- (1/8,4);
\draw[dashed,very thin] (1/16,0) -- (1/16,4);
\draw (1/16,0) circle (0.03);
\coordinate[label=below:$1$](1) at (4,0);
\coordinate[label=below:$1/2$](1/2) at (2,0);
\coordinate[label=below:$1/4$](1/4) at (1,0);
\coordinate[label=below:$1/8$](1/8) at (1/2,0);
\coordinate[label=below:$0$](0) at (0,0);
\coordinate[label=left:$1$](1) at (0,4);

\end{tikzpicture}
\caption{The map $T$}
 \end{figure} 
  It can be checked that the digits $d_i(x)$ in (\ref{def}) are determined by the formula
 \begin{equation}\label{dT}
   d_i(x)=\lceil -\log_2(T^{i-1}(x)) \rceil,\qquad \forall i\ge 1.
 \end{equation}

For $n\in \mathbb{N}$, and $(d_1,d_2,\cdots,d_n)\in \mathbb{N}^n$, we call
\begin{equation*}
  I_n(d_1,d_2,\cdots,d_n)=\{x\in (0,1]: d_1(x)=d_1, d_2(x)=d_2, \cdots, d_n(x)=d_n\}
\end{equation*}
a basic interval of rank $n$. The basic interval of rank $n$ containing $x$ is denoted by $I_n(x)$.  By the similarity of $T$ to the classical Gauss map, and the decaying rate of the associated basic intervals of rank $1$, we call (\ref{def})  power-2-decaying Gauss-like (P2DGL) expansion. Note that for any $x\in (0,1]$, we have $T^nx\neq 0$ for any $n\ge 1$, and
\begin{equation*}
  x=\sum_{i=1}^{n}2^{-(d_1(x)+d_2(x)+\cdots+d_i(x))}+\frac{T^{n}x}{2^{d_1(x)+d_2(x)+\dots+d_n(x)}},\qquad \forall n\in \mathbb{N}.
\end{equation*}

Then we can easily obtain the following lemma which gives the length of a basic interval.
\begin{lemma}\label{Lemma endpoint}
  The endpoints of $I_n(d_1,d_2,\cdots,d_n)$ are $\sum\limits_{i=1}^{n}\frac{1}{2^{d_1+\cdots+d_i}}$ and $\sum\limits_{i=1}^{n-1}\frac{1}{2^{d_1+\cdots+d_i}}+\frac{1}{2^{d_1+\dots+d_n-1}}.$ Consequently,
  \begin{equation}\label{|I_n|}
 |I_n(d_1,d_2,\cdots,d_n)|=\frac{1}{2^{d_1+\cdots+d_n}}\quad \text{and}\quad \frac{|I_{n+1}(d_1,d_2,\cdots,d_n,k)|}{|I_n(d_1,d_2,\cdots,d_n)|}=2^{-k}.
\end{equation}
\end{lemma}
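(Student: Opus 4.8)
The plan is to argue by induction on $n$, using the explicit affine action of $T$ on each basic interval together with the telescoping identity displayed just before the lemma. For the base case $n=1$, formula (\ref{dT}) gives $d_1(x)=\lceil-\log_2 x\rceil$, so $d_1(x)=d_1$ exactly when $2^{-d_1}<x\le 2^{-(d_1-1)}$; hence $I_1(d_1)=(2^{-d_1},2^{-(d_1-1)}]$, which matches the two displayed expressions (the sum $\sum_{i=1}^{1}2^{-d_1}$ and the empty sum plus $2^{-(d_1-1)}$), and yields $|I_1(d_1)|=2^{-(d_1-1)}-2^{-d_1}=2^{-d_1}$.

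For the inductive step, the key structural fact I would record first (as a short preliminary observation) is that each branch $x\mapsto 2^{m}x-1$ of $T$ on $(2^{-m},2^{-(m-1)}]$ is an orientation-preserving affine bijection onto $(0,1]$, and consequently, composing along a prescribed digit string, $T^{n}$ restricted to $I_n(d_1,\dots,d_n)$ is an increasing affine bijection onto $(0,1]$; equivalently, the identity
$$x=\sum_{i=1}^{n}\frac{1}{2^{d_1+\cdots+d_i}}+\frac{T^{n}x}{2^{d_1+\cdots+d_n}},\qquad x\in I_n(d_1,\dots,d_n),$$
exhibits $x$ as an increasing affine function of $T^n x$ with slope $2^{-(d_1+\cdots+d_n)}$. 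Since $I_{n+1}(d_1,\dots,d_n,k)=\{x\in I_n(d_1,\dots,d_n):T^n x\in I_1(k)\}$ and $I_1(k)=(2^{-k},2^{-(k-1)}]$ by the base case, pushing the two endpoints of $I_1(k)$ through this affine map shows that $I_{n+1}(d_1,\dots,d_n,k)$ has endpoints $\sum_{i=1}^{n}2^{-(d_1+\cdots+d_i)}+2^{-(d_1+\cdots+d_n+k)}$ and $\sum_{i=1}^{n}2^{-(d_1+\cdots+d_i)}+2^{-(d_1+\cdots+d_n+k-1)}$. Setting $d_{n+1}=k$ and simplifying (using $d_1+\cdots+d_n+k=d_1+\cdots+d_{n+1}$) identifies these with the claimed rank-$(n+1)$ expressions; subtracting gives $|I_{n+1}(d_1,\dots,d_{n+1})|=2^{-(d_1+\cdots+d_{n+1})}$, and dividing by $|I_n(d_1,\dots,d_n)|=2^{-(d_1+\cdots+d_n)}$ (inductive hypothesis) gives the ratio $2^{-k}$, completing the induction.

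There is no genuine difficulty here; the only points that require care are (i) justifying cleanly that $T^n$ maps $I_n(d_1,\dots,d_n)$ onto all of $(0,1]$ — this is where the self-similar structure of the construction enters, and it is cleanest to obtain it directly from the telescoping identity rather than from composing branches — and (ii) the bookkeeping of open versus closed endpoints and of orientation, so that the smaller endpoint $\sum_{i=1}^{n}2^{-(d_1+\cdots+d_i)}$ and the larger endpoint $\sum_{i=1}^{n-1}2^{-(d_1+\cdots+d_i)}+2^{-(d_1+\cdots+d_n-1)}$ are matched to the correct ends of the interval (note $2^{-(d_1+\cdots+d_n-1)}=2\cdot 2^{-(d_1+\cdots+d_n)}$, so the second is indeed the right endpoint).
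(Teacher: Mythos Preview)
Your proof is correct and is precisely the kind of verification the paper gestures at with its one-line ``It can be checked by the definition of $I_n(d_1,d_2,\cdots,d_n)$''; you have simply supplied the details, organizing them as an induction that exploits the telescoping identity displayed just before the lemma. There is nothing to add beyond noting that the paper's convention $\lceil x\rceil=$ ``smallest integer larger than $x$'' is what makes your base-case computation $I_1(d_1)=(2^{-d_1},2^{-(d_1-1)}]$ come out with the half-open interval oriented the right way.
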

\begin{proof}
  It can be checked by the definition of $I_n(d_1,d_2,\cdots,d_n)$.
\end{proof}

Furthermore, we can verify that $T$ is a measure-preserving transformation, and all the $d_i(\cdot)$ are i.i.d. random variables on the probability space $\left((0,1],\sigma\left((0,1]\right), \mathcal{L}\right)$.
\begin{lemma}
  The transformation $T$ defined by (\ref{T}) preserves the Lebesgue measure $\mathcal{L}$.
\end{lemma}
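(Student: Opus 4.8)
The plan is to verify measure-preservation directly on the semi-algebra of half-open subintervals of $(0,1]$ and then invoke the standard uniqueness-of-extension argument. First I would record the branch structure: on each domain $(1/2^n,1/2^{n-1}]$ the map $T$ is the affine bijection $x\mapsto 2^nx-1$ onto $(0,1]$, since the left endpoint $1/2^n$ is sent to $0$ and the right endpoint $1/2^{n-1}$ to $1$, and these domains partition $(0,1]$ (see Figure 2). Consequently, for any interval $(a,b]\subseteq(0,1]$ one has $Tx\in(a,b]$ with $x\in(1/2^n,1/2^{n-1}]$ exactly when $(1+a)/2^n<x\le(1+b)/2^n$, so
\begin{equation*}
T^{-1}\bigl((a,b]\bigr)=\bigcup_{n=1}^{\infty}\left(\frac{1+a}{2^n},\,\frac{1+b}{2^n}\right],
\end{equation*}
where the $n$-th piece is contained in $(1/2^n,1/2^{n-1}]$, so the union is pairwise disjoint and genuinely exhausts the preimage.

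Next I would compute the total length. The $n$-th interval above has length $(b-a)/2^n$, hence
\begin{equation*}
\mathcal{L}\bigl(T^{-1}((a,b])\bigr)=\sum_{n=1}^{\infty}\frac{b-a}{2^n}=b-a=\mathcal{L}\bigl((a,b]\bigr).
\end{equation*}
Since the half-open intervals $(a,b]\subseteq(0,1]$ form a $\pi$-system generating the Borel $\sigma$-algebra on $(0,1]$, and both $\mathcal{L}$ and $\mathcal{L}\circ T^{-1}$ are probability measures agreeing on this $\pi$-system, the uniqueness part of the Carathéodory/Dynkin extension theorem yields $\mathcal{L}(T^{-1}A)=\mathcal{L}(A)$ for every Borel set $A$; that is, $T$ preserves $\mathcal{L}$.

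There is no serious obstacle here: the only point needing a moment's care is confirming that the preimage pieces are pairwise disjoint and together fill $T^{-1}((a,b])$, which follows at once from the fact that $\{(1/2^n,1/2^{n-1}]\}_{n\ge 1}$ partitions $(0,1]$ and $T$ is affine and bijective on each part. As an equivalent alternative I could instead argue via the transfer (Perron--Frobenius) operator: since $|T'|=2^n$ on the $n$-th branch, applying the operator to the constant density $1$ gives $\sum_{n\ge 1}2^{-n}=1$, so the constant density is a fixed point and Lebesgue measure is $T$-invariant. I would also note in passing that this branch computation is consistent with the length formula of Lemma~\ref{Lemma endpoint} and is what underlies the assertion that the digits $d_i$ are i.i.d.\ under $\mathcal{L}$.
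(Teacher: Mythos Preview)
Your proof is correct and follows essentially the same route as the paper: compute $T^{-1}((a,b])=\bigcup_{n\ge1}((1+a)/2^n,(1+b)/2^n]$, sum the lengths to get $b-a$, and invoke the $\pi$-$\lambda$/Dynkin theorem to pass from half-open intervals to all Borel sets. The paper's version is terser (it omits the branch-by-branch justification of the preimage formula and the disjointness remark), but the argument is identical.
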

\begin{proof}
  Noting that for any $(a,b]\subset (0,1]$,
$$T^{-1}(a,b]=\bigcup_{n=1}^{\infty}\left(\frac{a+1}{2^n},\frac{b+1}{2^n}\right],$$
we have
$$\mathcal{L}(T^{-1}(a,b])=\sum_{n=1}^{\infty}\frac{b-a}{2^n}=b-a=\mathcal{L}\big((a,b]\big).$$
By Sierpiński–Dynkin's $\pi$-$\lambda$ theorem, the proof is finished.
\end{proof}

\begin{lemma}\label{iid}
  For any $i\ge 1$, the digits functions $x \mapsto d_i(x)$ defined by (\ref{dT}) are independent and identically distributed.
\end{lemma}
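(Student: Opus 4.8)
The plan is to show directly that, under the Lebesgue measure $\mathcal{L}$, the joint distribution of $(d_1,\dots,d_n)$ factors as the product of the individual distributions of the $d_i$, with all these distributions being the same; for a family of discrete random variables this factorization is, by definition, exactly what it means to be independent and identically distributed, so no further measure-theoretic machinery is needed. The only input I would use is the length formula (\ref{|I_n|}), together with the fact that, by its very definition, $I_n(d_1,\dots,d_n)$ \emph{is} the event $\{x:d_1(x)=d_1,\dots,d_n(x)=d_n\}$ and, by Lemma \ref{Lemma endpoint}, is an interval, hence Lebesgue measurable with measure equal to its length.

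First I would record the joint law: for every $n\ge 1$ and every $(d_1,\dots,d_n)\in\mathbb{N}^n$,
$$\mathcal{L}\big(\{x: d_1(x)=d_1,\dots,d_n(x)=d_n\}\big)=|I_n(d_1,\dots,d_n)|=\frac{1}{2^{d_1+\cdots+d_n}}.$$
Next I would compute the marginal law of $d_i$ for a fixed $i\ge 1$. Since the basic intervals of rank $i$ are pairwise disjoint and $\{x:d_i(x)=k\}=\bigsqcup_{(d_1,\dots,d_{i-1})\in\mathbb{N}^{i-1}}I_i(d_1,\dots,d_{i-1},k)$, summing the previous identity over $(d_1,\dots,d_{i-1})$ and using $\sum_{m=1}^{\infty}2^{-m}=1$ yields
$$\mathcal{L}\big(\{x:d_i(x)=k\}\big)=\frac{1}{2^{k}}\prod_{j=1}^{i-1}\Big(\sum_{d_j=1}^{\infty}\frac{1}{2^{d_j}}\Big)=\frac{1}{2^{k}},\qquad k\in\mathbb{N},$$
which does not depend on $i$; hence the $d_i$ are identically distributed, each with the geometric law $\mathcal{L}(d_i=k)=2^{-k}$. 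Finally, combining the two displays gives, for all $n$ and all $(d_1,\dots,d_n)\in\mathbb{N}^n$,
$$\mathcal{L}\big(\{x: d_1(x)=d_1,\dots,d_n(x)=d_n\}\big)=\prod_{i=1}^{n}\frac{1}{2^{d_i}}=\prod_{i=1}^{n}\mathcal{L}\big(\{x:d_i(x)=d_i\}\big),$$
which is precisely the assertion that $(d_i)_{i\ge 1}$ is an independent sequence.

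I do not expect a genuine obstacle: once Lemma \ref{Lemma endpoint} is available the argument is pure bookkeeping, and the only points that deserve a line of justification are the disjointness and exhaustiveness of the cylinder decomposition of $\{x:d_i(x)=k\}$, and the observation that for discrete random variables "independent'' means exactly that the joint mass function factors — so there is no need to invoke a $\pi$-$\lambda$ argument. (Alternatively, identical distribution could be obtained from the relation $d_i=d_1\circ T^{\,i-1}$ and the measure-preserving property of $T$ proved above, but the direct computation is shorter and additionally pins down the common marginal law explicitly.)
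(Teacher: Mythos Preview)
Your proposal is correct and follows essentially the same route as the paper: compute the marginal $\mathcal{L}(d_i=k)=2^{-k}$ by summing the cylinder lengths from Lemma~\ref{Lemma endpoint}, then observe that the joint mass function factors. The only cosmetic difference is that the paper verifies the factorization for an arbitrary finite index set $n_1<\cdots<n_k$ (summing out the unspecified coordinates), whereas you check it for the initial block $(d_1,\dots,d_n)$; since independence of every initial block implies independence of every finite subfamily, this is not a genuine gap.
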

\begin{proof}
 For any $i\ge 1$, we have
\begin{align*}
  &\mathcal{L}(\{x\in (0,1]:d_i(x)=d_i\}) \\ = & \mathcal{L}\left(\bigcup_{d_1=1}^{\infty}\cdots \bigcup_{d_{i-1}=1}^{\infty}\{x\in (0,1]:d_1(x)=d_1, \cdots,d_{i-1}(x)=d_{i-1},d_n(x)=d_i\}\right) \\
    =&\sum_{d_1=1}^{\infty}\cdots \sum_{d_{i-1}=1}^{\infty}\mathcal{L}(\{x\in (0,1]:d_1(x)=d_1,\cdots, d_{i-1}(x)=d_{i-1},d_i(x)=d_i\}) \\
    =&\sum_{d_1=1}^{\infty}\cdots \sum_{d_{i-1}=1}^{\infty}\frac{1}{2^{d_1+\cdots+d_{i-1}+d_i}}=\frac{1}{2^{d_i}}.
\end{align*}
Noting that for all $n\ge 1$ and $1\le i\le n$,
$$\sum_{d_i=1}^{\infty}\frac{1}{2^{d_1+\dots+d_n}}=
\frac{1}{2^{\sum_{j=1}^{i-1}d_j+\sum_{j=i+1}^{n}d_j}},$$
we deduce that for any $k\in \mathbb{N}$ and $n_1<n_2<\cdots <n_k$,
\begin{align*}
  &\mathcal{L}(\{x \in (0,1]:d_{n_1}(x)=d_{n_1}, d_{n_2}(x)=d_{n_2},\cdots,d_{n_k}(x)=d_{n_k}\}) \\
  =&\frac{1}{2^{d_{n_1}+\cdots+d_{n_k}}}=\prod_{i=1}^k \mathcal{L}(\{x\in (0,1]:d_{n_i}(x)=d_{n_i}\}).
\end{align*}
Thus the digits functions are independent and identically distributed.
\end{proof}

 The Hausdroff dimension of the set of numbers whose digits are bounded is given by the following lemma.
 
  For any integer $M\ge 2$, let
\begin{equation}\label{EM}
  E_M=\{x\in (0,1]: 1\le d_n(x)\le M,~\forall n\ge 1\}.
\end{equation}
 
\begin{lemma}(\cite[Proposition 3.1]{neunhauserer2022dimension})\label{lemmasM1}
  For any $M\ge 2$, we have $\dim_{\mathrm{H}}E_M=\log_2(s_M)$, where $s_M\in (1,2)$ is the unique real solution of
  $$x^M-x^{M-1}-\cdots-x^2-x-1=0,$$
  and
\begin{equation*}
  \lim_{M\to \infty}s_M=2.
\end{equation*}
\end{lemma}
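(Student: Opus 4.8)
The plan is to recognize $E_M$, up to a countable set of endpoints, as the attractor of the iterated function system $\mathcal F=\{f_k(x)=2^{-k}(x+1)\}_{k=1}^{M}$ on $[0,1]$, and then invoke the classical Moran--Hutchinson formula. Using Lemma~\ref{Lemma endpoint} one checks directly that $f_{d_1}\circ\cdots\circ f_{d_n}([0,1])=\overline{I_n(d_1,\dots,d_n)}$, so that $E_M$ and the attractor $K=\bigcup_{k=1}^{M}f_k(K)$ differ by only countably many points, whence $\dim_{\mathrm H}E_M=\dim_{\mathrm H}K$. The open set condition holds with $U=(0,1)$, since the images $f_k(U)=(2^{-k},2^{-(k-1)})$, $1\le k\le M$, are pairwise disjoint subintervals of $U$. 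Consequently $\dim_{\mathrm H}E_M=s^{\ast}$ and $0<\mathcal H^{s^{\ast}}(E_M)<\infty$, where $s^{\ast}$ is the unique solution of $\sum_{k=1}^{M}(2^{-k})^{s}=1$.

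Should one prefer an argument that does not cite Hutchinson's theorem, both inequalities follow from Lemma~\ref{Lemma endpoint}. For $\dim_{\mathrm H}E_M\le s^{\ast}$, cover $E_M$ by the rank-$n$ basic intervals with all digits in $\{1,\dots,M\}$: for $s>s^{\ast}$ one has $\sum_{(d_1,\dots,d_n)\in\{1,\dots,M\}^{n}}|I_n(d_1,\dots,d_n)|^{s}=\big(\sum_{k=1}^{M}2^{-ks}\big)^{n}\to 0$ while $|I_n|\le 2^{-n}\to 0$, so $\mathcal H^{s}(E_M)=0$. For the reverse inequality, set $p_k=2^{-ks^{\ast}}$ (so $\sum_k p_k=1$) and let $\mu$ be the Borel probability measure on $E_M$ determined by $\mu\big(I_n(d_1,\dots,d_n)\big)=\prod_{i=1}^{n}p_{d_i}=|I_n(d_1,\dots,d_n)|^{s^{\ast}}$. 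For an interval $B$ with $|B|=r$, the family $Q$ of basic intervals $I_n$ satisfying $|I_n|\le r<|I_{n-1}|$ is a pairwise disjoint cover of $E_M$, and since $d_n\le M$ forces $|I_n|\ge 2^{-M}|I_{n-1}|>2^{-M}r$, every member of $Q$ has length in $(2^{-M}r,r]$; hence at most $2^{M}+2$ of them can meet $B$, so $\mu(B)\le(2^{M}+2)\,r^{s^{\ast}}$ and the mass distribution principle gives $\mathcal H^{s^{\ast}}(E_M)>0$.

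It then remains to pass to the polynomial and locate its root. Put $s_M:=2^{s^{\ast}}$; since $2^{-ks^{\ast}}=s_M^{-k}$, the identity $\sum_{k=1}^{M}2^{-ks^{\ast}}=1$ becomes $\sum_{k=1}^{M}s_M^{-k}=1$, and multiplying by $s_M^{M}$ yields $s_M^{M}=s_M^{M-1}+\cdots+s_M+1$, i.e. $s_M$ is a root of $x^{M}-x^{M-1}-\cdots-x-1$. Because $x\mapsto\sum_{k=1}^{M}x^{-k}$ is continuous and strictly decreasing on $(0,\infty)$, running from $+\infty$ down to $0$, this positive root is unique; it lies in $(1,2)$ since the sum equals $M\ge 2>1$ at $x=1$ and equals $1-2^{-M}<1$ at $x=2$ (equivalently $s^{\ast}\in(0,1)$, as $s\mapsto\sum_k 2^{-ks}$ equals $M>1$ at $s=0$ and $1-2^{-M}<1$ at $s=1$). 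Finally, for each fixed $x>1$ the partial sums $\sum_{k=1}^{M}x^{-k}$ increase in $M$, so the sequence $(s_M)$ is increasing and bounded above by $2$; its limit $s_\infty$ satisfies $\sum_{k=1}^{\infty}s_\infty^{-k}=1$, which forces $s_\infty=2$.

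The only step calling for genuine care is the lower bound $\dim_{\mathrm H}E_M\ge s^{\ast}$: either one verifies the open set condition so that Moran's formula holds with equality rather than merely as an upper bound, or, in the self-contained route, one proves the uniform estimate $\mu(B)\lesssim|B|^{s^{\ast}}$. What makes the latter work is precisely the restriction $d_n\le M$, which prevents successive generations of basic intervals from contracting by more than a factor $2^{M}$, so that the stopped family $Q$ consists of comparably sized disjoint intervals; the remaining ingredients — the covering estimate, the algebraic manipulation, and the monotonicity analysis of $\sum_{k=1}^{M}x^{-k}$ — are routine.
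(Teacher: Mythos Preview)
Your proof is correct. Note, however, that the paper does not actually prove this lemma: it is quoted verbatim from Neunh\"auserer \cite{neunhauserer2022dimension} and used as a black box. So there is no ``paper's own proof'' to compare against; what you have supplied is a self-contained justification that the paper omits.

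Your IFS argument is the natural one and matches what one would expect in Neunh\"auserer's original: the inverse branches $f_k(x)=2^{-k}(x+1)$ are similarities with ratios $2^{-k}$, the open set condition holds with $U=(0,1)$, and the Moran equation $\sum_{k=1}^{M}2^{-ks}=1$ delivers the dimension. The substitution $s_M=2^{s^\ast}$ and the monotonicity analysis of $\sum_{k=1}^{M}x^{-k}$ are carried out cleanly. One small remark: the phrase ``the unique real solution'' in the lemma is slightly loose, since for $M=2$ the polynomial $x^2-x-1$ also has a negative real root; your argument correctly establishes uniqueness among positive reals (equivalently, in $(1,2)$), which is all that is needed or intended. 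The limit $s_M\to 2$ via monotonicity and the identity $\sum_{k\ge 1}2^{-k}=1$ is fine, though the interchange of limit and infinite sum deserves one more sentence if full rigor is required.
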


For a given function $\phi:\mathbb{N}\to\mathbb{R}^+$, define
\begin{equation}\label{Gphi}
  G(\phi)=\{x\in (0,1]:L_n(x)\ge \phi(n),~\text{i.m.}~n\}.
\end{equation}
The following lemma establishes the relationship between $F(\phi)$ and $G(\phi)$.
\begin{lemma}\label{use3}
  Let $\phi:\mathbb{N}\to\mathbb{R}^+$ be an increasing positive function with $\lim\limits_{n\to \infty}\phi(n)=\infty$. Let $F(\phi)$ and $G(\phi)$ be defined as in (\ref{1111}) and (\ref{Gphi}). Then
  $$G(\phi)\subset F(\phi).$$
\end{lemma}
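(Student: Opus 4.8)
The plan is to take a point $x \in G(\phi)$ and, for each of the infinitely many $n$ at which $L_n(x) \ge \phi(n)$, pick an index $i \le n$ at which this running maximum is attained; the monotonicity of $\phi$ then transfers the inequality from position $n$ down to position $i$, and the divergence $\phi(n)\to\infty$ guarantees that infinitely many distinct such indices $i$ occur.

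First I would fix $x \in G(\phi)$ and set $\mathcal{N} = \{n \in \mathbb{N} : L_n(x) \ge \phi(n)\}$, which is infinite by definition of $G(\phi)$. For each $n \in \mathcal{N}$ choose some $i(n) \in \{1,\dots,n\}$ with $d_{i(n)}(x) = L_n(x)$. Since $i(n) \le n$ and $\phi$ is increasing,
\[
d_{i(n)}(x) = L_n(x) \ge \phi(n) \ge \phi\big(i(n)\big),
\]
so every index $i(n)$ lies in the set $\{m \in \mathbb{N} : d_m(x) \ge \phi(m)\}$.

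It then remains to check that $\{i(n) : n \in \mathcal{N}\}$ is infinite, which is where the hypothesis $\lim_{n\to\infty}\phi(n)=\infty$ is used. If this set were finite, say contained in $\{1,\dots,N\}$, then for every $n \in \mathcal{N}$ we would have $L_n(x) = d_{i(n)}(x) \le \max_{1\le m\le N} d_m(x) =: C < \infty$; but $L_n(x) \ge \phi(n) \to \infty$ along the infinite set $\mathcal{N}$, a contradiction. Hence the set of witnessing indices is infinite, so $d_m(x) \ge \phi(m)$ for infinitely many $m$, i.e. $x \in F(\phi)$; as $x$ was arbitrary, $G(\phi) \subset F(\phi)$.

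The argument is short and presents no serious obstacle; the only subtlety worth flagging is that one must not stop after exhibiting a single index $i(n) \le n$ with $d_{i(n)}(x) \ge \phi(i(n))$, since a priori the same index might serve for all $n \in \mathcal{N}$. The assumption $\phi(n)\to\infty$ is precisely what rules this out, by forcing the locations at which the running maxima are attained to be unbounded. (Note also that the monotonicity of $\phi$ is genuinely needed for the step $\phi(n) \ge \phi(i(n))$.)
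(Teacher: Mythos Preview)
Your proof is correct and follows essentially the same approach as the paper's: pick, for each $n$ with $L_n(x)\ge\phi(n)$, an index $i(n)\le n$ at which the maximum is attained, use monotonicity of $\phi$ to get $d_{i(n)}(x)\ge\phi(i(n))$, and then argue by contradiction (via $\phi(n)\to\infty$) that the set of such indices must be infinite. Your write-up is in fact slightly more careful than the paper's in distinguishing the index set $\mathcal{N}$ and in spelling out the contradiction step.
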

\begin{proof}
For any $x\in G(\phi)$,  $L_n(x)\ge\phi(n)$ occurs infinitely often. Since $\phi$ is increasing, then there exists an $m_n\le n$ such that $$d_{m_n}(x)\ge \phi(n)\ge \phi(m_n).$$
  If the set $\{m_n:n\in \mathbb{N}\}$ has only finitely many elements, then there exists $M: =\max\{d_{m_n}: n\in \mathbb{N}\}$ such that $M>\phi (n)$. This contradicts the assumption $\lim\limits_{n\to \infty}\phi(n)=\infty$.
  Therefore, $d_n(x)\ge \phi(n)$ occurs infinitely often.
\end{proof}

Consider a sequence $\{X_i\}_{i\in \mathbb{N}}$ of i.i.d. random variables defined on a probability space $(\Omega, \mathscr{A}, \mathbb{P})$, with a common distribution function $F(x)=\mathbb{P}({X_n\le x})$ for all $x\in (-\infty, +\infty)$ and $n\in \mathbb{N}$. The following theorem of Barndorff-Nielsen \cite{barndorff1961rate} will be useful for us.
\begin{theorem}(\cite[Theorem 1]{barndorff1961rate})\label{theorem OBN}
  Let $\{\lambda_n\}$ be a non-decreasing sequence of real numbers, such that the sequence $\{(F(\lambda_n))^n\}$ is non-increasing. Then
  $$\mathbb{P}\left(\left\{\max_{1\le i\le n}X_i\le \lambda_n~~ \text{for infinitely many}~n\right\}\right)=\left\{
  \begin{aligned}
     & 0 \\
     & 1,
  \end{aligned}
  \right.
  $$
  if
  $$\sum_{n=1}^{\infty}(F(\lambda_n))^n\frac{\log\log n}{n}
  \left\{
  \begin{aligned}
     & <\infty \\
     & =\infty.
  \end{aligned}
  \right.
  $$
\end{theorem}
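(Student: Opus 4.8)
The plan is to reduce the statement to a clean dichotomy and then treat each half by a blocking argument along a carefully calibrated subsequence. Write $p_n:=(F(\lambda_n))^n$, which is non-increasing by hypothesis, put $M_n:=\max_{1\le i\le n}X_i$ and $A_n:=\{M_n\le\lambda_n\}$, so that $\mathbb{P}(A_n)=p_n$ by independence. First I would observe that $\{A_n\text{ i.o.}\}=\limsup_n A_n$ is a Hewitt--Savage exchangeable event: if $\pi$ is a permutation of $\mathbb{N}$ fixing all but finitely many indices, then for every $n$ larger than all indices moved by $\pi$ the multiset $\{X_1,\dots,X_n\}$, hence $M_n$, hence $A_n$, is $\pi$-invariant; so $\limsup_n A_n$ is invariant under every finite permutation and $\mathbb{P}(\limsup_n A_n)\in\{0,1\}$. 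It therefore suffices to show that the probability is $0$ when $\sum_n p_n\tfrac{\log\log n}{n}<\infty$ and strictly positive when that series diverges. One may also assume $p_n\to 0$: otherwise $p_n\downarrow p>0$, reverse Fatou gives $\mathbb{P}(\limsup_n A_n)\ge p>0$ hence the probability is $1$, and the series diverges trivially.

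For the convergence half, pick an increasing subsequence $(n_k)$ with $n_{k+1}/n_k=1+\varepsilon_k$ and $\varepsilon_k\asymp 1/\log\log n_k$ (so $n_k$ grows roughly like $\exp(k/\log k)$). Since $\lambda_\bullet$ and $M_\bullet$ are non-decreasing, for $n_k\le n<n_{k+1}$ we have $A_n\subseteq B_k:=\{M_{n_k}\le\lambda_{n_{k+1}}\}$, so $\limsup_n A_n\subseteq\limsup_k B_k$ and $\mathbb{P}(B_k)=(F(\lambda_{n_{k+1}}))^{n_k}=p_{n_{k+1}}^{\,n_k/n_{k+1}}$, with exponent at least $1-c\varepsilon_k$. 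Comparing block sums, $\sum_{n_k\le n<n_{k+1}}p_n\tfrac{\log\log n}{n}\asymp p_{n_k}$ because the block contains $\asymp n_k\varepsilon_k\asymp n_k/\log\log n_k$ integers, each weighted by $\asymp(\log\log n_k)/n_k$; hence $\sum_k p_{n_k}\lesssim\sum_n p_n\tfrac{\log\log n}{n}<\infty$. It then remains to absorb the exponent defect: $p_{n_{k+1}}^{\,1-c\varepsilon_k}=p_{n_{k+1}}\exp\!\big(c\varepsilon_k\log\tfrac{1}{p_{n_{k+1}}}\big)$, and the exponential factor is $O(1)$ as long as $p_{n_{k+1}}$ exceeds some fixed negative power of $\log n_k$, while for the remaining indices $p_{n_{k+1}}$ is so tiny that $\sum_k\mathbb{P}(B_k)$ converges over them with room to spare. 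Thus $\sum_k\mathbb{P}(B_k)<\infty$ and Borel--Cantelli gives $\mathbb{P}(\limsup_n A_n)=0$.

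For the divergence half I would run the second-moment (Kochen--Stone) inequality along the same subsequence. By the block comparison, $\sum_k\mathbb{P}(A_{n_k})=\sum_k p_{n_k}\asymp\sum_n p_n\tfrac{\log\log n}{n}=\infty$. For $j<k$, splitting the coordinates into the disjoint blocks $\{1,\dots,n_j\}$ and $\{n_j+1,\dots,n_k\}$ gives the exact identity $\mathbb{P}(A_{n_j}\cap A_{n_k})=p_{n_j}\,(F(\lambda_{n_k}))^{n_k-n_j}=p_{n_j}\,p_{n_k}^{\,1-n_j/n_k}$. One must then bound $\sum_{j<k\le K}\mathbb{P}(A_{n_j}\cap A_{n_k})$ by $O\big((\sum_{k\le K}p_{n_k})^2\big)$: the terms with $n_j\le n_k/2$ carry exponent $1-n_j/n_k\ge 1/2$ and are tame, while the dangerous near-diagonal terms ($n_j\in[n_k/2,n_k)$, exponent small) are controlled because there are only $O(\log\log n_k)$ of them and, by the same $\varepsilon_k$-calibration, $p_{n_k}^{-n_j/n_k}=O(1)$ on the range of $k$ where $p_{n_k}$ is not too small (the complementary $k$ contributing a convergent sum, hence negligible). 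Kochen--Stone then gives $\mathbb{P}(\limsup_k A_{n_k})>0$, whence $\mathbb{P}(\limsup_n A_n)>0$, and by the Hewitt--Savage $0$--$1$ law this probability equals $1$.

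The step I expect to be the main obstacle is precisely the calibration $\varepsilon_k\asymp 1/\log\log n_k$: this single choice is exactly what makes the weight $\tfrac{\log\log n}{n}$ surface, and it must simultaneously tame the exponent inflation $p^{n_k/n_{k+1}}$ in the convergence estimate and the near-diagonal covariances $p_{n_j}p_{n_k}^{1-n_j/n_k}$ in the divergence estimate, which forces one to split off and dispose separately of the sparse set of indices where $p_{n_k}$ drops below every fixed negative power of $\log n_k$. Conceptually this $\log\log$ is the signature of the iterated-logarithm fluctuations of the record-counting process $R_n=\#\{i\le n: X_i\text{ is a record}\}$: writing $i_1<i_2<\dots$ for the record times, the event $\{A_n\text{ i.o.}\}$ is equivalent to ``$X_{i_j}\le\lambda_{i_{j+1}}$ for infinitely many $j$'', and it is the oscillation of $R_n$ around $\log n$ on the LIL scale that converts the naive criterion $\sum_n p_n/n$ into the correct one $\sum_n p_n(\log\log n)/n$. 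Minor technical points are the reduction to $p_n\to 0$ already noted, and the degenerate case in which $F$ has an atom at the right end of its support, where records eventually cease and the analysis trivializes.
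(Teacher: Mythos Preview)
The paper does not contain a proof of this statement. Theorem~\ref{theorem OBN} is quoted verbatim as \cite[Theorem~1]{barndorff1961rate} and used as a black box in the proof of Proposition~\ref{maintheorem22}; no argument for it is given or even sketched in the paper. There is therefore nothing to compare your proposal against.

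As for the proposal itself, the architecture is the standard one for results of this type: reduce to a $0$--$1$ law (Hewitt--Savage is the right choice, since the events $A_n$ are not independent but are exchangeable), then run a blocking argument along a geometric-type subsequence, with first Borel--Cantelli for the convergence half and a second-moment/Kochen--Stone inequality for the divergence half. Your key insight, that the calibration $\varepsilon_k\asymp 1/\log\log n_k$ is exactly what makes the weight $\tfrac{\log\log n}{n}$ emerge from the block comparison $\sum_{n_k\le n<n_{k+1}}p_n\tfrac{\log\log n}{n}\asymp p_{n_k}$, is correct and is the heart of the matter. The places where your sketch is thinnest are the ones you flag yourself: absorbing the exponent defect $p^{1-c\varepsilon_k}$ in the convergence part, and controlling the near-diagonal covariances $p_{n_j}p_{n_k}^{1-n_j/n_k}$ in the divergence part, both of which require splitting off the indices where $p_{n_k}$ is extremely small. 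These are genuine technical steps rather than formalities, but they are doable along the lines you indicate; consulting Barndorff-Nielsen's original paper would show you how the details are arranged.
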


\section{Proof of Theorem \ref{main theorem BB}}

\begin{proof}
  We begin with the divergence part. Suppose $I_{m+n}$ is a basic interval of rank $m+n$ in which the points satisfy the conditions
  \begin{equation}\label{63}
    d_{m+\ell}(x)<\phi(m+\ell), \qquad \ell=1,2,\cdots,n.
  \end{equation}
  The points of the interval $I_{m+n}$ that satisfy the additional condition $d_{m+n+1}=k$ form a basic interval of rank $m+n+1$, denoted by $I_{m+n+1}^{(k)}$. By (\ref{|I_n|}), we have
$|I_{m+n+1}^{(k)}|=\frac{1}{2^k}|I_{m+n}|.$
Hence,
\begin{align*}
  \sum_{k\ge \phi(m+n+1)}|I_{m+n+1}^{(k)}|&=|I_{m+n}|\sum_{k\ge \phi(m+n+1)}\frac{1}{2^k}\geq|I_{m+n}|\sum_{\ell=1}^{\infty}\frac{1}{2^{\phi(m+n+1)+\ell}}= |I_{m+n}|\frac{1}{2^{\phi(m+n+1)}}.
\end{align*}
Since
$\sum\limits_{k=1}^{\infty}|I_{m+n+1}^{(k)}|=|I_{m+n}|,$
it follows that
\begin{equation}\label{ineq1}
  \sum_{k< \phi(m+n+1)}|I_{m+n+1}^{(k)}|<\left(1-\frac{1}{2^{\phi(m+n+1)}}\right)|I_{m+n}|.
\end{equation}
Define
  $$J_{m,n}=\bigcup_{d_{m+\ell}<\phi(m+\ell),~\ell=1,2,\cdots,n}I_{m+n}(d_1,\cdots,d_m,d_{m+1},\cdots,d_{m+n}).$$
By summing the inequality (\ref{ineq1}) over all basic intervals of rank $m+n$ that satisfy the conditions (\ref{63}), we obtain
\begin{align}\label{ineq2}
\mathcal{L}(J_{m,n+1})&\nonumber=\sum_{d_{m+\ell}<\phi(m+\ell)\atop \ell=1,2,\cdots,n}\sum_{k<\phi(m+n+1)}|I_{m+n+1}^{(k)}|\\&< \left(1-\frac{1}{2^{\phi(m+n+1)}}\right)\sum_{d_{m+\ell}<\phi(m+\ell)\atop \ell=1,2,\cdots,n}|I_{m+n}|=\left(1-\frac{1}{2^{\phi(m+n+1)}}\right)\mathcal{L}(J_{m,n}).
\end{align}
Iterating the inequality (\ref{ineq2}) gives
$$\mathcal{L}(J_{m,n})<\mathcal{L}(J_{m,1})\prod_{\ell=2}^{n}\left(1-\frac{1}{ 2^{\phi(m+\ell)}}\right).$$
If the series $\sum\limits_{n=1}^{\infty}\frac{1}{2^{\phi(n)}}$ diverge, then for arbitrary $m$, the series
$\sum\limits_{\ell=2}^{\infty}\frac{1}{ 2^{\phi(m+\ell)}}$
 also diverge. Hence, it follows that the product
$$\prod_{\ell=2}^{n}\left(1-\frac{1}{ 2^{\phi(m+\ell)}}\right)$$
approaches to zero as $n \to \infty$. Therefore, for arbitrary $m$,
$\mathcal{L}(J_{m,n})\to 0 \ \text{as} \ n\to\infty.$

Let $$E_m:=\{x\in (0,1]:d_{m+\ell}(x)<\phi(m+\ell),~\forall \ell\ge 1\}.$$
Then for any $n\ge 1$, we have $E_m\subset J_{m,n}$. Hence $\mathcal{L}(E_m)=0$. Finally, let
$E=\bigcup\limits_{m=1}^{\infty} E_m,$
then $\mathcal{L}(E)=0$. Since the complement of the set $E$ is $F(\phi)$, this proves the first assertion of the theorem.

Suppose now the series $\sum\limits_{n=1}^{\infty}\frac{1}{2^{\phi(n)}}$ converge. Assume that $I_n$ is one of the basic intervals of rank $n$ and that $I_{n+1}^{(k)}$ is its subinterval of rank $n+1$ with $d_{n+1}(x)=k$, for any $x\in I_{n+1}^{(k)}$. By the equality (\ref{|I_n|}), we have $\frac{|I_{n+1}^{(k)}|}{|I_n|}=2^{-k}$, and
\begin{equation*}
  \sum_{k\ge\phi(n+1)} |I_{n+1}^{(k)}|  < |I_n| \sum_{k\ge \phi(n+1)}\frac{1}{2^k}
    \le |I_n| \sum_{\ell=0}^{\infty}\frac{1}{2^{\phi(n+1)+\ell-1}}
   =|I_n|\frac{1}{2^{\phi(n+1)-1}}.
\end{equation*}

Let
$$F_n:=\{x\in (0,1]:d_n(x)\ge \phi(n)\}.$$
We have
$$\mathcal{L}(F_{n+1})=\sum_{d_1,\cdots,d_n=1}^{\infty}\sum_{k\ge\phi(n+1)} |I_{n+1}^{(k)}|<\sum_{d_1,\cdots,d_n=1}^{\infty}|I_n|\frac{1}{2^{\phi(n+1)-1}}= \frac{1}{2^{\phi(n+1)-1}}.$$
Thus, the Lebesgue measure of the sets $F_1,F_2,\dots,F_n,\dots$ form a convergent series. One has $F(\phi)=\bigcap\limits_{N\ge 1}\bigcup\limits_{n\ge N}F_n$. By the Borel-Cantelli Lemma, we have
$\mathcal{L}(F(\phi))=0.$ This proves the convergence part of the theorem.

\end{proof}

\section{Proof of Theorem \ref{main theorem 1}}

 First, we establish several key consequences derived from Theorem  \ref{main theorem BB}. 
\begin{proposition}\label{coro1}
  Let $\phi:\mathbb{N}\to\mathbb{R}^+$ be an increasing positive function and $G(\phi)$ be defined as in (\ref{Gphi}). Then $\mathcal{L}(G(\phi))$ is null or full according as whether the series $\sum_{n=1}^{\infty}\frac{1}{2^{\phi(n)}}$ converges or not.
\end{proposition}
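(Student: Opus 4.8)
The plan is to deduce Proposition \ref{coro1} from Theorem \ref{main theorem BB} together with Lemma \ref{use3}, treating the convergence and divergence cases separately.

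First I would handle the \emph{convergence} case. If $\sum_{n\ge 1}2^{-\phi(n)}<\infty$, I want to show $\mathcal{L}(G(\phi))=0$. Since $L_n(x)\ge\phi(n)$ means $d_i(x)\ge\phi(n)$ for some $1\le i\le n$, a union bound gives
\begin{equation*}
  \mathcal{L}\big(\{x:L_n(x)\ge\phi(n)\}\big)\le\sum_{i=1}^{n}\mathcal{L}\big(\{x:d_i(x)\ge\phi(n)\}\big).
\end{equation*}
By Lemma \ref{iid} each digit is distributed like $d_1$, and by the computation in the proof of Lemma \ref{iid} (or Lemma \ref{Lemma endpoint}) one has $\mathcal{L}(\{d_i(x)\ge t\})\le 2^{-\phi(n)+1}$ when $t=\phi(n)$, so the bound above is at most $n\cdot 2^{-\phi(n)+1}$. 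This is not summable in general, so a direct Borel--Cantelli on these sets fails; instead I would exploit monotonicity of $\phi$. Because $\phi$ is increasing, if $L_n(x)\ge\phi(n)$ infinitely often then (as in the proof of Lemma \ref{use3}) $d_n(x)\ge\phi(n)$ infinitely often, i.e. $G(\phi)\subset F(\phi)$ — but Lemma \ref{use3} as stated also requires $\phi(n)\to\infty$. If $\phi$ is bounded, then $\sum 2^{-\phi(n)}$ diverges, so in the convergence case $\phi(n)\to\infty$ automatically and Lemma \ref{use3} applies, giving $G(\phi)\subset F(\phi)$; then Theorem \ref{main theorem BB} yields $\mathcal{L}(F(\phi))=0$, hence $\mathcal{L}(G(\phi))=0$.

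For the \emph{divergence} case, if $\sum_{n\ge1}2^{-\phi(n)}=\infty$ I must show $\mathcal{L}(G(\phi))=1$. Here the key observation is that $F(\phi)\subset G(\phi)$: if $d_n(x)\ge\phi(n)$ infinitely often then $L_n(x)\ge d_n(x)\ge\phi(n)$ infinitely often, and this containment holds for \emph{any} positive function $\phi$, with no monotonicity needed. By Theorem \ref{main theorem BB}, divergence of the series gives $\mathcal{L}(F(\phi))=1$, so $\mathcal{L}(G(\phi))\ge\mathcal{L}(F(\phi))=1$, hence $\mathcal{L}(G(\phi))=1$. Combining the two cases establishes the zero--one law for $G(\phi)$.

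I do not anticipate a serious obstacle: the whole argument is a sandwiching of $G(\phi)$ between $F(\phi)$ and a set controlled by Theorem \ref{main theorem BB}, with the only subtlety being the bookkeeping around the hypothesis $\phi(n)\to\infty$ in Lemma \ref{use3}. The cleanest route, which I would adopt, is to note that in the convergence case the series condition forces $\phi(n)\to\infty$, so Lemma \ref{use3} is applicable as stated; the divergence case needs only the trivial inclusion $F(\phi)\subset G(\phi)$. If one wished to avoid invoking $\phi(n)\to\infty$ at all, one could instead prove $G(\phi)\subset F(\phi)$ up to a set of measure zero directly, but this refinement is unnecessary for the statement as given.
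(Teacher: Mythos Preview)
Your proposal is correct and follows essentially the same approach as the paper: both rely on the trivial inclusion $F(\phi)\subset G(\phi)$ together with Lemma~\ref{use3} (for $G(\phi)\subset F(\phi)$ when $\phi(n)\to\infty$) to reduce everything to Theorem~\ref{main theorem BB}. The only cosmetic difference is that the paper splits into the cases ``$\phi$ bounded'' versus ``$\phi(n)\to\infty$'' rather than ``series diverges'' versus ``series converges'', but as you correctly note these partitions coincide for increasing $\phi$.
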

\begin{proof}
  If  $\phi$ is bounded, then the series $\sum_{n=1}^{\infty}\frac{1}{2^{\phi(n)}}$ diverges. Theorem \ref{main theorem BB} implies that, for almost all $x\in (0,1]$,
   the inequality $L_n(x)\ge d_n(x)> \phi(n)$ holds for infinitely many $n$'s.

  Now let $\lim\limits_{n\to \infty}\phi(n)=\infty$. We aim to prove that $d_n(x)\ge\phi(n)$ occurs infinitely often if and only if $L_n(x)\ge\phi(n)$ holds for infinite many values of $n$.
  If $d_n(x)\ge\phi(n)$, it is evident that $L_n(x)\ge \phi(n)$. Conversely, if $L_n(x)\ge \phi(n)$, from Lemma \ref{use3}, we deduce that $d_n(x)\ge\phi(n)$ occurs infinitely often.
\end{proof}

\begin{proposition}\label{coro3}
  For almost all $x\in (0,1]$,
  \begin{align*}
     & \limsup\limits_{n\to \infty}\frac{d_n(x)-\log_2 n}{\log_2\log n}=1, \\
     & \limsup\limits_{n\to \infty}\frac{L_n(x)-\log_2 n}{\log_2\log n}=1.\label{limsup}
  \end{align*}
\end{proposition}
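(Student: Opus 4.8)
The plan is to deduce Proposition~\ref{coro3} directly from the two Borel--Bernstein-type dichotomies already in hand --- Theorem~\ref{main theorem BB} for the single digit $d_n$ and Proposition~\ref{coro1} for the maximal digit $L_n$ --- combined with the trivial pointwise bound $d_n(x)\le L_n(x)$. The only quantitative ingredient is the classical Bertrand series test: $\sum_{n}\frac{1}{n(\log n)^{p}}$ converges if and only if $p>1$. Since both quantities in the statement depend only on the behaviour as $n\to\infty$, I am free to apply the dichotomies to functions that agree with any prescribed formula for large $n$ and take arbitrary (positive, and increasing where needed) values on a finite initial segment; this changes neither the convergence of the relevant series nor the ``infinitely often'' events.

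\emph{Lower bound.} For fixed $\varepsilon\in(0,1)$ set $\phi_\varepsilon(n)=\log_2 n+(1-\varepsilon)\log_2\log n$ for all large $n$. Then $2^{-\phi_\varepsilon(n)}=\frac{1}{n(\log n)^{1-\varepsilon}}$, so $\sum_n 2^{-\phi_\varepsilon(n)}=\infty$ and Theorem~\ref{main theorem BB} yields $\mathcal{L}(F(\phi_\varepsilon))=1$: for Lebesgue-almost every $x$ the inequality $d_n(x)\ge\phi_\varepsilon(n)$ holds for infinitely many $n$, hence $\limsup_{n}\frac{d_n(x)-\log_2 n}{\log_2\log n}\ge 1-\varepsilon$. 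Intersecting the corresponding full-measure sets over $\varepsilon=1/k$, $k\in\mathbb{N}$, gives $\limsup_{n}\frac{d_n(x)-\log_2 n}{\log_2\log n}\ge 1$ almost everywhere, and since $L_n(x)\ge d_n(x)$ the same lower bound holds with $L_n$ in place of $d_n$.

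\emph{Upper bound and conclusion.} For fixed $\varepsilon>0$ set $\psi_\varepsilon(n)=\log_2 n+(1+\varepsilon)\log_2\log n$ for large $n$, arranged to be increasing and positive on all of $\mathbb{N}$. Then $\sum_n 2^{-\psi_\varepsilon(n)}=\sum_n\frac{1}{n(\log n)^{1+\varepsilon}}<\infty$, so Proposition~\ref{coro1} gives $\mathcal{L}(G(\psi_\varepsilon))=0$: for almost every $x$, $L_n(x)\ge\psi_\varepsilon(n)$ occurs only finitely often, i.e.\ $L_n(x)<\psi_\varepsilon(n)$ for all large $n$, so $\limsup_{n}\frac{L_n(x)-\log_2 n}{\log_2\log n}\le 1+\varepsilon$. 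Intersecting over $\varepsilon=1/k$ gives $\limsup_{n}\frac{L_n(x)-\log_2 n}{\log_2\log n}\le 1$ almost everywhere, and since $d_n(x)\le L_n(x)$ this upper bound also holds with $d_n$. On the common full-measure set the two sandwiches force $\limsup_n\frac{d_n(x)-\log_2 n}{\log_2\log n}=\limsup_n\frac{L_n(x)-\log_2 n}{\log_2\log n}=1$, which is the assertion.

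\emph{Main obstacle.} No serious difficulty is expected; the proof is essentially bookkeeping around the two dichotomies. The points that genuinely need attention are: (i) the comparison functions $\phi_\varepsilon,\psi_\varepsilon$ must be honestly positive (so one works only with $n$ large, where $\log_2\log n>0$) and, to invoke Proposition~\ref{coro1}, $\psi_\varepsilon$ must be increasing; (ii) for the upper bound on $L_n$ one must use Proposition~\ref{coro1} rather than Theorem~\ref{theorem OBN}, because Barndorff--Nielsen's theorem governs the event $\{L_n\le\lambda_n \text{ i.m.}\}$ and hence the \emph{lower} envelope of $L_n$, whereas here the required statement is ``$L_n<\psi_\varepsilon(n)$ eventually'', precisely the elementary Borel--Cantelli half encoded in Proposition~\ref{coro1}; (iii) the Bertrand test must be applied at the exact exponent $1$, which is why the critical growth rate is $\log_2 n+\log_2\log n$ and not, say, $\log_2 n+c\log_2 n$.
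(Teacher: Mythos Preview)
Your proof is correct and follows essentially the same approach as the paper: apply the Borel--Bernstein dichotomies (Theorem~\ref{main theorem BB} and Proposition~\ref{coro1}) with the test functions $\log_2 n + t\log_2\log n$ and invoke the Bertrand series criterion at the critical exponent $t=1$. The only organizational difference is that the paper treats $d_n$ and $L_n$ in parallel (proving both bounds for $d_n$ from Theorem~\ref{main theorem BB}, then repeating for $L_n$ from Proposition~\ref{coro1}), whereas you prove the lower bound once for $d_n$ and transfer it to $L_n$, and the upper bound once for $L_n$ and transfer it to $d_n$, via the trivial inequality $d_n\le L_n$ --- a slightly more economical packaging of the same argument.
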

\begin{proof}
For $t>0$, define $\phi_t:\mathbb{N}\to\mathbb{R}^+$ by $\phi_t(n)=\log_2 n+t\log_2\log n$. Then $\sum_{n=1}^{\infty}\frac{1}{2^{\phi_1(n)}}=\infty$. Hence, by Theorem \ref{main theorem BB}, for almost all $x$,
$$d_n(x)\ge \phi_1(n)=\log_2 n +\log_2 \log n \qquad \text{i.m}.~ n.$$
This implies
\begin{equation}\label{11}
  \limsup\limits_{n\to \infty}\frac{d_n(x)-\log_2 n}{\log_2\log n}\ge 1.
\end{equation}
On the other hand, for any $t> 1$, $\sum\limits_{n=1}^{\infty}\frac{1}{2^{\phi_t(n)}}<\infty$. By Theorem \ref{main theorem BB}, we have
$$d_n(x)< \phi_t(n)=\log_2 n +t\log_2 \log n\qquad \text{for all large}~n.$$
This leads to
$$\limsup\limits_{n\to \infty}\frac{d_n(x)-\log_2 n}{\log_2\log n}\le t.$$
By arbitrariness of $t>1$, we have
\begin{equation}\label{12}
  \limsup\limits_{n\to \infty}\frac{d_n(x)-\log_2 n}{\log_2\log n}\le 1.
\end{equation}
Combining (\ref{11}) and (\ref{12}), we obtain
$$\limsup_{n\to \infty}\frac{d_n(x)-\log_2 n}{\log_2\log n}= 1.$$

Applying the same argument, with $d_n(x)$ replaced by $L_n(x)$, from Proposition \ref{coro1}, we deduce that
$$\limsup\limits_{n\to \infty}\frac{L_n(x)-\log_2 n}{\log_2\log n}=1.$$
\end{proof}

While Proposition \ref{coro3} highlights some similarities of $d_n$ and $L_n$, the following proposition  shows that $L_n$ exhibits a "smoother" behavior compared to $d_n$.

\begin{proposition}\label{maintheorem22}
  For almost all $x\in (0,1]$,
  \begin{align*}
     & \liminf\limits_{n\to \infty}\frac{d_n(x)-\log_2 n}{\log_2\log n}=-\infty, \\
     & \liminf\limits_{n\to \infty}\frac{L_n(x)-\log_2 n}{\log_2\log n}=0.
  \end{align*}
\end{proposition}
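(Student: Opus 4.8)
I would prove the two assertions separately. The one about $d_n$ is immediate from independence: by Lemma~\ref{iid} the events $\{d_n(x)=1\}$ are independent and identically distributed with $\mathcal{L}(\{d_n(x)=1\})=1/2$, so $\sum_n\mathcal{L}(\{d_n(x)=1\})=\infty$ and the second Borel--Cantelli lemma gives, for almost all $x$, that $d_n(x)=1$ for infinitely many $n$; along such a subsequence $\frac{d_n(x)-\log_2 n}{\log_2\log n}=\frac{1-\log_2 n}{\log_2\log n}\to-\infty$, hence $\liminf_n\frac{d_n(x)-\log_2 n}{\log_2\log n}=-\infty$ almost surely. All the work lies in the statement about $L_n$, which I would prove by establishing, almost surely, that the $\liminf$ is both $\ge 0$ and $\le 0$, and then intersecting over $\varepsilon\in\{1/k:k\in\mathbb{N}\}$.

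The structural point I would exploit is that $n\mapsto L_n(x)$ is non-decreasing and $\mathbb{N}$-valued, which lets one replace the full sequence of ``test times'' by a sparse one. Fix $\varepsilon>0$, set $\lambda_n=\log_2 n\pm\varepsilon\log_2\log n$ and $\psi(n)=\lfloor\lambda_n\rfloor$; then $\psi$ is non-decreasing, tends to $\infty$, and for every large $m$ its level set $\psi^{-1}(m)$ is a finite block of consecutive integers with left endpoint $N_m:=\min\{n:\psi(n)=m\}$. Since $L$ is non-decreasing and $L_{N_m}(x)$ is an integer, $L_n(x)\le\lambda_n$ for some $n\in\psi^{-1}(m)$ holds precisely when $L_{N_m}(x)\le m$; hence the event $\{L_n(x)\le\lambda_n\text{ for infinitely many }n\}$ coincides, up to finitely many values of $m$, with $\{L_{N_m}(x)\le m\text{ for infinitely many }m\}$. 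A short computation gives $\log_2 N_m=m\pm\varepsilon\log_2 m+O(1)$, so $N_m$ is of order $2^m m^{\pm\varepsilon}$, and by independence (Lemma~\ref{iid}), $\mathcal{L}(\{L_{N_m}(x)\le m\})=(1-2^{-m})^{N_m}$.

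For the lower bound I would take $\lambda_n=\log_2 n-\varepsilon\log_2\log n$, so $N_m$ has order $2^m m^{\varepsilon}$ and $\mathcal{L}(\{L_{N_m}(x)\le m\})\le\exp(-2^{-m}N_m)\le\exp(-c\,m^{\varepsilon})$ for some $c>0$ and all large $m$; this is summable in $m$ for every $\varepsilon>0$, so the first Borel--Cantelli lemma gives that, almost surely, $L_n(x)>\log_2 n-\varepsilon\log_2\log n$ eventually, whence $\liminf\ge-\varepsilon$. For the upper bound I would take $\lambda_n=\log_2 n+\varepsilon\log_2\log n$, so now $N_m$ has order $2^m m^{-\varepsilon}$, hence $2^{-m}N_m=O(m^{-\varepsilon})\to 0$ and, by Bernoulli's inequality, $\mathcal{L}(\{L_{N_m}(x)\le m\})\ge 1-2^{-m}N_m\to 1$. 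Since any sequence of events $A_m$ with $\mathcal{L}(A_m)\to 1$ automatically satisfies $\mathcal{L}(\limsup_m A_m)=1$ (by continuity from above, $\mathcal{L}(\bigcup_{m\ge N}A_m)\ge\sup_{m\ge N}\mathcal{L}(A_m)\to 1$), it follows that, almost surely, $L_n(x)\le\log_2 n+\varepsilon\log_2\log n$ for infinitely many $n$, whence $\liminf\le\varepsilon$. Letting $\varepsilon\downarrow 0$ in both bounds gives $\liminf_n\frac{L_n(x)-\log_2 n}{\log_2\log n}=0$ almost surely.

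I expect the upper bound $\liminf\le 0$ to be the main obstacle: it is a divergence-type statement, demanding that a prescribed behaviour recur for infinitely many $n$, yet the events $\{L_n(x)\le\log_2 n+\varepsilon\log_2\log n\}$ are strongly dependent for consecutive $n$, so the second Borel--Cantelli lemma is not directly applicable. The monotonicity of $L_n$ is what rescues the argument: it collapses the question onto the sparse scale $N_m\asymp 2^m$, along which one only needs the soft observation that probabilities tending to $1$ force the limsup to have full measure. (Morally this is the mechanism encoded by the $\frac{\log\log n}{n}$ weight in Barndorff--Nielsen's Theorem~\ref{theorem OBN}, which one could invoke instead, at the mild cost of verifying its monotonicity hypothesis after replacing $\lambda_n$ by the step function $\lfloor\lambda_n\rfloor$.) Everything else---the estimate $N_m\asymp 2^m m^{\pm\varepsilon}$ and the elementary summability and Bernoulli computations---is routine.
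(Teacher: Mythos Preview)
Your proof is correct and, for the $L_n$ statement, takes a genuinely different route from the paper. Both arguments coincide on the $d_n$ part (second Borel--Cantelli on the i.i.d.\ events $\{d_n=1\}$). For $L_n$, the paper invokes Barndorff--Nielsen's Theorem~\ref{theorem OBN} as a black box in both directions: it checks that $\sum_n (F(\log_2 n))^n\frac{\log\log n}{n}$ diverges to get $\liminf\le 0$ directly, and that $\sum_n (F(\log_2 n-\varepsilon\log_2\log n))^n\frac{\log\log n}{n}$ converges to get $\liminf\ge -\varepsilon$. You instead prove everything from first principles by sparsifying along $N_m\asymp 2^m m^{\pm\varepsilon}$, using the monotonicity of $L_n$ to reduce the i.m.\ statement to the discrete scale $m$, and then appealing only to the first Borel--Cantelli lemma for the lower bound and the soft ``$\mathcal{L}(A_m)\to 1\Rightarrow\mathcal{L}(\limsup A_m)=1$'' observation for the upper bound. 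Your approach is more self-contained and makes transparent the mechanism hidden inside Barndorff--Nielsen's weight $\frac{\log\log n}{n}$; the paper's approach is shorter on the page but outsources the substance to Theorem~\ref{theorem OBN}. One incidental difference: the paper obtains $\liminf\le 0$ in a single shot with $\lambda_n=\log_2 n$, whereas your argument needs the extra $\varepsilon\log_2\log n$ cushion to make $2^{-m}N_m\to 0$, and then lets $\varepsilon\downarrow 0$; this is harmless.
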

\begin{proof}
For any integer $t\ge 1$ and $n\ge 1$, it holds that
$$\mathcal{L}(\{x: d_n(x) = t\})=\frac{1}{2^t}.$$
 According to the Borel–Cantelli Lemma, for almost all $x$, $d_n=1$ for infinitely many values of $n$, since $\{d_i\}_{i\in \mathbb{N}}$ is a sequence of  i.i.d. random variables. Therefore,
  $$\liminf\limits_{n\to \infty} d_n=1.$$
  This implies
  $$\liminf\limits_{n\to \infty}\frac{d_n(x)-\log_2 n}{\log_2\log n}=-\infty.$$

Now, let us prove the second equality. By Lemma \ref{iid}, $\{d_i\}_{i\in \mathbb{N}}$ is a sequence of  i.i.d. random variables. Let $F$ be their distribution function.
Then
\begin{align*}
  F(\log_2 n) & =\mathcal{L}(\{d_n(x)\le \log_2 n\})=1-\mathcal{L}(\{d_n(x)>\log_2 n\}) \\
   & =1-\sum_{d_n(x)>\log_2 n}\frac{1}{2^{d_n(x)}}\ge 1-\frac{2}{n}.
\end{align*}
Hence,
$$\sum_{n=1}^{\infty}(F(\log_2 n))^n\frac{\log \log n}{n}\ge \sum_{n=1}^{\infty}\left(1-\frac{2}{n}\right)^n\frac{\log \log n}{n}=\infty.$$
Then, by Theorem \ref{theorem OBN}, we obtain
$$\mathcal{L}(\{L_n(x)\le \log_2 n\quad\text{i.m.}~n\})=1.$$
This implies
\begin{equation}\label{le}
  \liminf_{n\to \infty}\frac{L_n(x)-\log_2 n}{\log_2\log n}\le 0.
\end{equation}
For any $\epsilon>0$, we have
\begin{align*}
  F(\log_2 n+\log_2(\log n)^{-\epsilon}) & =\mathcal{L}(\{d_n(x)\le \log_2 n+\log_2(\log n)^{-\epsilon}\}) \\
   &= 1-\mathcal{L}(\{d_n(x)> \log_2 n+\log_2(\log n)^{-\epsilon}\}) \\
   &=1-\sum_{d_n(x)> \log_2 n+\log_2(\log n)^{-\epsilon}} \frac{1}{2^{d_n(x)}}\\
   &\le 1-\frac{(\log n)^{\epsilon}}{n}.
\end{align*}
Then,
$$\sum_{n=1}^{\infty}(F(\log_2 n+\log_2(\log n)^{-\epsilon}))^n\frac{\log \log n}{n}\le\sum_{n=1}^{\infty}\left(1-\frac{(\log n)^{\epsilon}}{n}\right)^n\frac{\log \log n}{n}<\infty.$$
By Theorem \ref{theorem OBN}, we conclude that
$$\mathcal{L}(\{L_n(x)\le \log_2 n+\log_2(\log n)^{-\epsilon}\quad\text{i.m.}~n\})=0.$$
This implies
\begin{equation}\label{ge}
  \liminf_{n\to \infty}\frac{L_n(x)-\log_2 n}{\log_2\log n}\ge -\epsilon.
\end{equation}
Letting $\epsilon\to 0$,  we deduce from (\ref{le}) and (\ref{ge}) that
$$\liminf_{n\to \infty}\frac{L_n(x)-\log_2 n}{\log_2\log n}=0.$$
\end{proof}

Theorem \ref{main theorem 1} is an immediate consequence of Proposition \ref{coro3} and Proposition \ref{maintheorem22}.

\section{Proof of Theorem \ref{main theorem im}}
Let us present some basic properties of the dimensional number $s(\alpha)$ appearing in Theorem \ref{main theorem im}.
\begin{lemma}\label{Lemma s(B)}
  For $\alpha\ge 0$, let $s(\alpha)$ be the unique real solution of the equation
  \begin{equation*} 
    \sum_{k=1}^{\infty}\left(\frac{1}{2^{\alpha}2^k}\right)^s=1.
  \end{equation*}
  Then
  \begin{enumerate}
    \item[(1)] $s(\alpha)$ is continuous with respect to $\alpha$;
    \item[(2)] $\lim\limits_{\alpha\to \infty}s(\alpha)=0$ and $s(0)=1$.
    \end{enumerate}
\end{lemma}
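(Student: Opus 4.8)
Define $f_\alpha(s) = \sum_{k=1}^{\infty}\left(\frac{1}{2^\alpha 2^k}\right)^s = 2^{-\alpha s}\sum_{k=1}^\infty 2^{-ks} = \frac{2^{-\alpha s}}{2^s - 1}$ for $s > 0$, where the geometric series converges. The plan is to work with this closed form $f_\alpha(s) = \frac{1}{2^{\alpha s}(2^s-1)}$ throughout, since it makes both claims transparent. First I would verify that for each fixed $\alpha \ge 0$, the function $s \mapsto f_\alpha(s)$ is continuous and strictly decreasing on $(0,\infty)$, with $f_\alpha(s) \to \infty$ as $s \to 0^+$ (because $2^s - 1 \to 0$ and $2^{\alpha s}\to 1$) and $f_\alpha(s)\to 0$ as $s\to\infty$; hence by the intermediate value theorem there is a unique $s(\alpha) \in (0,\infty)$ with $f_\alpha(s(\alpha)) = 1$, which justifies that $s(\alpha)$ is well-defined.

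For part (2), at $\alpha = 0$ the equation reads $\frac{1}{2^s - 1} = 1$, i.e. $2^s = 2$, giving $s(0) = 1$. For the limit as $\alpha\to\infty$: fix any $\epsilon > 0$. Then $f_\alpha(\epsilon) = \frac{1}{2^{\alpha\epsilon}(2^\epsilon - 1)} \to 0$ as $\alpha\to\infty$, so for $\alpha$ large enough $f_\alpha(\epsilon) < 1$; since $f_\alpha$ is strictly decreasing and $f_\alpha(s(\alpha)) = 1$, this forces $s(\alpha) < \epsilon$. As $s(\alpha) > 0$ always, we get $\lim_{\alpha\to\infty} s(\alpha) = 0$.

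For part (1), continuity of $\alpha \mapsto s(\alpha)$: I would invoke the implicit function theorem, or argue directly. Consider $H(\alpha, s) = 2^{-\alpha s}(2^s - 1)^{-1}$; this is jointly continuous and differentiable on $[0,\infty)\times(0,\infty)$, and $\frac{\partial H}{\partial s} = -2^{-\alpha s}\left(\frac{\alpha}{2^s-1}\cdot\frac{1}{1} + \frac{2^s\log 2}{(2^s-1)^2}\right)\cdot(\text{positive factors}) < 0$ wherever $H$ is defined, in particular it is nonzero at $(\alpha, s(\alpha))$. The implicit function theorem then yields that $s(\alpha)$ is continuous (indeed $C^1$) in $\alpha$. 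Alternatively one can give an elementary $\epsilon$-$\delta$ proof: given $\alpha_0$ and $\epsilon > 0$, note $f_{\alpha_0}(s(\alpha_0)\pm\epsilon) \lessgtr 1$ strictly, and then by continuity of $\alpha\mapsto f_\alpha(s(\alpha_0)\pm\epsilon)$ the same strict inequalities persist for $\alpha$ near $\alpha_0$, which pins $s(\alpha)$ into $(s(\alpha_0)-\epsilon, s(\alpha_0)+\epsilon)$ by monotonicity.

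The only mild subtlety, and the step I would be most careful about, is the behavior near $s = 0$: one must confirm the series genuinely diverges there (it does, since it dominates the harmonic-type tail, or simply because $2^s - 1 \to 0$), so that $s(\alpha)$ does not degenerate to $0$ for finite $\alpha$; and symmetrically that the series is finite for every $s > 0$ so $f_\alpha$ is defined on all of $(0,\infty)$. Both follow immediately from the closed form $f_\alpha(s) = \frac{1}{2^{\alpha s}(2^s - 1)}$, so there is no real obstacle here — the lemma is essentially a direct computation once the geometric series is summed.
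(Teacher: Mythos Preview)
Your proposal is correct and follows exactly the paper's approach: the paper's entire proof is the single observation that $\sum_{k=1}^{\infty}\left(\frac{1}{2^{\alpha}2^k}\right)^s=\frac{1}{2^{s\alpha}(2^s-1)}$, leaving the routine consequences implicit, and you have simply written out those consequences in detail. Your derivative computation is a bit garbled notationally, but the monotonicity is clear from the closed form (each factor $2^{\alpha s}$ and $2^s-1$ is strictly increasing in $s$), so nothing is at risk.
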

\begin{proof}
  It is suffices to note that $ \sum\limits_{k=1}^{\infty}\left(\frac{1}{2^{\alpha}2^k}\right)^s=\frac{1}{2^{s\alpha}(2^s-1)}$.
\end{proof}

 We begin by calculating the Hausdorff dimension of
 \begin{equation}\label{add}
   F(\alpha)=\{x\in (0,1]: d_n(x)\ge \alpha n,~~\text{i.m.}~n\},\qquad 0<\alpha<\infty.
 \end{equation}

\begin{theorem}\label{addthm}
  Let $0<\alpha<\infty$ and $F(\alpha)$ be defined as in (\ref{add}). Then
  $$\dim_{\mathrm{H}}F(\alpha)= s(\alpha)$$
\end{theorem}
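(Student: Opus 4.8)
The plan is to establish the matching bounds $\dim_{\mathrm{H}}F(\alpha)\le s(\alpha)$ and $\dim_{\mathrm{H}}F(\alpha)\ge s(\alpha)$ separately. Write $g(s):=\sum_{k\ge1}(2^{-\alpha}2^{-k})^{s}=\frac{1}{2^{s\alpha}(2^{s}-1)}$; this is strictly decreasing on $(0,\infty)$ with $g(s(\alpha))=1$, so $g(s)<1$ for $s>s(\alpha)$ and $g(s)>1$ for $s<s(\alpha)$, and moreover $s(\alpha)\le s(0)=1$. For the upper bound, write $F(\alpha)=\bigcap_{N\ge1}\bigcup_{n\ge N}\{x:d_{n}(x)\ge\alpha n\}$; for each $N$ the basic intervals $I_{n}(d_{1},\cdots,d_{n})$ with $n\ge N$ and $d_{n}\ge\lceil\alpha n\rceil$ cover $F(\alpha)$ and have diameter at most $2^{-N}$. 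Using $|I_{n}(d_{1},\cdots,d_{n})|=2^{-(d_{1}+\cdots+d_{n})}$ (Lemma~\ref{Lemma endpoint}) and summing geometric series in each coordinate (the first $n-1$ free over $\mathbb{N}$, the last over $d_{n}\ge\lceil\alpha n\rceil\ge\alpha n$) gives, for $s>0$,
\[
\sum_{n\ge N}\ \sum_{d_{1},\cdots,d_{n-1}\ge1}\ \sum_{d_{n}\ge\lceil\alpha n\rceil}|I_{n}|^{s}\ \le\ 2^{s}\sum_{n\ge N}\Bigl(\frac{2^{-s\alpha}}{2^{s}-1}\Bigr)^{n}=2^{s}\sum_{n\ge N}g(s)^{n}.
\]
When $s>s(\alpha)$ we have $g(s)<1$, so this tends to $0$ as $N\to\infty$; hence $\mathcal{H}^{s}(F(\alpha))=0$ and $\dim_{\mathrm{H}}F(\alpha)\le s(\alpha)$.

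For the lower bound, fix $s<s(\alpha)$; it suffices to produce a Borel set $E\subseteq F(\alpha)$ with $\dim_{\mathrm{H}}E\ge s$ and then let $s\uparrow s(\alpha)$. Since $s<1$ we have $\log(1/(2^{s}-1))>0$, so we may set $c:=\frac{s\alpha\log2}{\log(1/(2^{s}-1))}$; the hypothesis $g(s)>1$, i.e.\ $2^{s\alpha}(2^{s}-1)<1$, is exactly $c\in(0,1)$, whence $\rho:=1/(1-c)\in(1,\infty)$. Choose integers $n_{1}<n_{2}<\cdots$ with $n_{1}$ large, $n_{k+1}-n_{k}\to\infty$, and $n_{k+1}/n_{k}\to\rho'$ for a fixed $\rho'$ slightly larger than $\rho$ (this small excess over the critical ratio is essential), and put
\[
E:=\bigl\{x\in(0,1]:d_{n_{k}}(x)\ge\lceil\alpha n_{k}\rceil\ \text{for every }k\ge1\bigr\}\subseteq F(\alpha).
\]
(For full rigour one should also cap the special digits, say $d_{n_{k}}\le2\lceil\alpha n_{k}\rceil$, and the remaining digits by a constant $M$, then let $M\to\infty$; I suppress this.) On $E$ put the measure $\mu$ under which the $d_{j}$ are independent with $\mathbb{P}(d_{j}=m)=(2^{s}-1)2^{-sm}$ for $j\notin\{n_{k}\}$ and $\mathbb{P}(d_{n_{k}}=m)=c_{k}2^{-sm}$ for $m\ge\lceil\alpha n_{k}\rceil$, where $c_{k}\asymp2^{s\alpha n_{k}}$ is the normalising constant. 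Multiplying these weights and using $|I_{n}|=2^{-(d_{1}+\cdots+d_{n})}$ yields, for every basic interval $I_{n}$ of $E$ and up to a factor subexponential in $n$,
\[
\mu(I_{n})\ \approx\ (2^{s}-1)^{n}\,2^{\,s\sum_{k:\,n_{k}\le n}\lceil\alpha n_{k}\rceil}\,|I_{n}|^{s}.
\]
The geometric growth of $\{n_{k}\}$ with ratio $\rho'>\rho$ makes the exponent $n\log(2^{s}-1)+s\log2\sum_{k:n_{k}\le n}\lceil\alpha n_{k}\rceil$ bounded above by $-\delta n_{K}$ for a fixed $\delta>0$, where $n_{K}$ is the largest special position $\le n$. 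Hence $\mu(I_{n})\le|I_{n}|^{s}$ for all $n$, and—the decisive point—at $n=n_{k+1}-1$ one gets the sharper bound $\mu\bigl(I_{n_{k+1}-1}(x)\bigr)\lesssim\bigl(2^{-\alpha n_{k+1}}|I_{n_{k+1}-1}(x)|\bigr)^{s}$: the mass of a cylinder is already at most the $s$-th power of the diameter of the minute sub-cylinders that the upcoming large digit $d_{n_{k+1}}$ will carve out.

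It then remains to prove the mass distribution estimate $\mu(B(x,r))\lesssim r^{s}$ for $x\in E$ and small $r$. Let $n=n(x,r)$ satisfy $|I_{n+1}(x)|\le r<|I_{n}(x)|$. If $n+1\notin\{n_{k}\}$, the children of $I_{n}(x)$ tile it, the one with $(n{+}1)$-st digit $m$ having diameter $2^{-m}|I_{n}(x)|$ and $\mu$-mass $\asymp2^{-sm}\mu(I_{n}(x))$; counting those meeting $B(x,r)$ (boundedly many of diameter $\gtrsim r$, plus a tail of total length $\lesssim r$) gives $\mu(B(x,r))\lesssim(r/|I_{n}(x)|)^{s}\mu(I_{n}(x))+\mu(I_{n+1}(x))\lesssim r^{s}$ via $\mu(I_{n})\le|I_{n}|^{s}$. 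If $n+1=n_{k+1}$, the admissible children of $I_{n}(x)$ fill only a left sub-interval of $I_{n}(x)$ of length $\asymp2^{-\alpha n_{k+1}}|I_{n}(x)|$; the same count—combined now with the sharper bound on $\mu(I_{n_{k+1}-1}(x))$ above, and, in the sub-case where $r$ exceeds that sub-interval, with the analogous bounds for the few neighbouring rank-$(n_{k+1}-1)$ cylinders that $B(x,r)$ can reach—again yields $\mu(B(x,r))\lesssim r^{s}$. The mass distribution principle then gives $\dim_{\mathrm{H}}E\ge s$, and letting $s\uparrow s(\alpha)$ completes the proof.

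The genuine obstacle is precisely this last step at the scales where a single special digit forces an enormous contraction: for a whole window of radii $r$ the ball $B(x,r)$ can meet astronomically many extremely short cylinders and even spill into sibling cylinders, so the crude inequality $\mu(B(x,r))\le\mu(I_{n}(x))$ is hopelessly lossy. The construction is engineered to beat this—the growth rate $\rho'$ of the special positions is pushed just past the critical value $1/(1-c)$ so that cylinder masses lie a definite power below their diameters and, in particular, pre-absorb the next large digit. Verifying that this single calibration simultaneously keeps $\mu$ a probability measure on $E$ and makes every case of the $\mu(B(x,r))$ bound go through is the technical heart of the argument.
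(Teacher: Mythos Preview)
Your upper bound is correct and essentially identical to the paper's: both cover $F(\alpha)$ by cylinders whose last digit is forced to be $\ge\alpha n$, sum the $s$-powers of diameters as a product of geometric series, and use that the resulting expression $g(s)^{n}$ decays when $s>s(\alpha)$.

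The lower bound, however, takes a genuinely different route. The paper chooses the special positions $\{n_{k}\}$ to be \emph{super}-exponentially sparse ($n_{1}+\cdots+n_{k}<\tfrac{1}{k+1}n_{k+1}$), caps the special digit in a narrow window $[\lfloor\alpha n_{k}\rfloor+1,\lfloor(1+1/k)\alpha n_{k}\rfloor]$ with uniform mass, bounds all other digits by $M$, and puts on each non-special digit the weight $(2^{-\alpha}2^{-\sigma_{i}})^{s_{M}(\alpha)}$; the extreme sparsity makes the special digits asymptotically negligible, so one gets $\dim_{\mathrm H}F_{M}(\alpha)\ge s_{M}(\alpha)$ and then sends $M\to\infty$. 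You instead fix $s<s(\alpha)$, give \emph{every} digit (special or not) a geometric-$2^{-s}$ law, and space the special positions only geometrically at the critical rate $\rho'=n_{k+1}/n_{k}$ just above $1/(1-c)$ with $c=s\alpha\log 2/\log(1/(2^{s}-1))$; this calibration is exactly what forces $\mu(I_{n})\le|I_{n}|^{s}$ uniformly and even pre-absorbs the next big digit at $n=n_{k+1}-1$. That computation is correct and is an elegant alternative: it trades the paper's ``make the bad positions negligible'' for ``balance them precisely''.

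Where your write-up is thinner than the paper is the ball estimate $\mu(B(x,r))\lesssim r^{s}$ in the window $|I_{n_{k+1}}(x)|\le r<|I_{n_{k+1}-1}(x)|$. With your unbounded-digit measure the rank-$n$ siblings have arbitrarily many sizes, and when $r$ exceeds the admissible sub-interval but is still below $|I_{n_{k+1}-1}(x)|$ the ball can cross into neighbouring rank-$(n_{k+1}-1)$ cylinders whose own admissible regions sit at their \emph{left} ends; tracking which of these carry mass, and how much, needs a genuine case analysis (the paper handles the analogous step by explicit gap estimates, made tractable because there are only $M$ siblings at each non-special level). Your parenthetical ``cap by $M$ and let $M\to\infty$'' is exactly the fix, but once you do that the argument drifts back toward the paper's. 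As written, the proposal is a sound outline whose hard case is correctly identified and morally handled, but not yet a complete proof of the mass-distribution step.
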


 The idea of the proof of Theorem \ref{addthm} comes from \cite{wang2008hausdorff} and \cite{shen2017hausdorff}.

\subsection{Upper bound of $\dim_{\mathrm{H}}F(\alpha)$}
The upper bound estimation of $\dim_{\mathrm{H}}F(\alpha)$ is by a natural covering. Note that
$$F(\alpha)=\bigcap_{N\ge 1}\bigcup_{n\ge N}\bigcup_{(d_{1},\cdots,d_n)\in \mathbb{N}^n}J(d_1,\cdots,d_n),$$
 where
$$J(d_1,\cdots,d_n)=\{x\in(0,1]:d_k(x)=d_k, 1\le k\le n, d_{n+1}(x)\ge \alpha(n+1)\}.$$
 We have
 $$\mathcal{L}(J(d_1,\cdots,d_n)) =\sum_{d_{n+1}\ge \alpha(n+1)}|I_{n+1}(d_1,\cdots,d_n,d_{n+1})|\le\frac{1}{2^{d_1+\cdots+d_n}\cdot 2^{ \alpha(n+1)-1}}.$$

Then, for any $\varepsilon >0$,
\begin{align*}
  \mathcal{H}^{s(\alpha)+\varepsilon}(F(\alpha)) & \le \liminf_{N\to \infty}\sum_{n\ge N}\sum_{d_1,\cdots,d_n}| J(d_1,\cdots,d_n)|^{s(\alpha)+\varepsilon} \\
   & \le \liminf_{N\to \infty}\sum_{n\ge N}\sum_{d_1,\cdots,d_n}\left(\frac{1}{2^{d_1+\cdots+d_n}\cdot 2^{\alpha(n+1)-1}}\right)^{s(\alpha)+\varepsilon} \\
   & \le \liminf_{N\to \infty}\sum_{n\ge N}\sum_{d_1,\cdots,d_n}2^{s(\alpha)\varepsilon}\left(\frac{1}{2^{d_1+\cdots+d_n}\cdot 2^{\alpha n}}\right)^{s(\alpha)}\left(\frac{1}{2^{d_1+\cdots+d_n}\cdot 2^{\alpha n}}\right)^{\varepsilon} \\
   & \le \liminf_{N\to \infty}\sum_{n\ge N}\sum_{d_1,\cdots,d_n}2^{s(\alpha)\varepsilon}\left(\frac{1}{2^{d_1+\cdots+d_n}\cdot 2^{\alpha n}}\right)^{s(\alpha)}\left(\frac{1}{2^n\cdot 2^{\alpha n}}\right)^{\varepsilon} \\
   & = \liminf_{N\to \infty}\sum_{n\ge N}2^{s(\alpha)\varepsilon}\left(\frac{1}{2^n\cdot 2^{\alpha n}}\right)^{\varepsilon}=0,
\end{align*}
where the penultimate equality is by the equation (\ref{Lemma s(B)}). Therefore, $\dim_{\mathrm{H}}F(\alpha)\le s(\alpha)+\varepsilon$. Due to the arbitrariness of $\varepsilon$, we conclude that $\dim_{\mathrm{H}}F(\alpha)\le s(\alpha)$.

\subsection{Lower bound of $\dim_{\mathrm{H}}F(\alpha)$}
To establish the lower bound of $\dim_{\mathrm{H}}F(\alpha)$, we will construct a subset $F_{M}(\alpha)\subset F(\alpha)$ and use the Hausdorff dimension of $F_{M}(\alpha)$ to approximate that of $F(\alpha)$.

\subsubsection{Construction of subset $F_M(\alpha)$ of $F(\alpha)$}
For $\alpha>0$,  we define a sequence $\{n_k\}_{k\in \mathbb{N}}$ satisfying the following conditions:
\begin{equation}\label{n_k}
  \lfloor\alpha (1+1/k)n_k\rfloor-\lfloor  \alpha n_k \rfloor> 1\qquad \text{and}\qquad n_1+n_2+\cdots+n_k<\frac{1}{k+1}n_{k+1}.
\end{equation}
Let $M\in \mathbb{N}$. We define a subset $F_{M}(\alpha )\subset F(\alpha )$ as
\begin{align}\label{same}
  F_{M}(\alpha )=\{ &\nonumber x\in (0,1]:\lfloor \alpha n_k\rfloor+1\le d_{n_k}(x)\le \lfloor(1+1/k)\alpha n_k\rfloor,~\text{for all}~ k\ge 1, \\
   & \text{and}~1\le d_j(x)\le M,~\text{for all}~ j\neq n_k\}.
\end{align}
Denote by $s_M(\alpha )$ the unique real solution of the equation
$$\sum_{k=1}^{M}\left(\frac{1}{2^\alpha 2^k}\right)^s=1.$$
\begin{remark}\label{Remark}
  By the definitions of $s(\alpha )$ and $s_M(\alpha )$, we have
  $$\lim_{M\to \infty}s_M(\alpha )=s(\alpha ).$$
\end{remark}
Now, we provide a symbolic description of the structure of $F_M(\alpha )$.  Let
\begin{align*}
  D_n=&\{  (\sigma_1,\cdots,\sigma_n)\in \mathbb{N}^n:\lfloor \alpha n_k\rfloor+1\le \sigma_{n_k}\le \lfloor(1+1/k)\alpha n_k\rfloor, \\
   &~\text{for all}~ k\ge 1, 1\le n_k\le n, ~\text{and}~1\le \sigma_j\le M,~\forall j\neq n_k, 1\le j\le n\},\\
  D=& \bigcup_{n=1}^{\infty}D_n.
\end{align*}
For any $n\ge 1$ and any $(\sigma_1,\cdots,\sigma_n)\in D_n$, we call $I_n(\sigma_1,\cdots,\sigma_n)$ a basic interval of rank $n$. Furthermore,
\begin{equation}\label{J}
  J(\sigma_1,\cdots,\sigma_n)=\bigcup_{\sigma_{n+1}: (\sigma_1,\cdots,\sigma_{n+1})\in D_{n+1}}I_{n+1}(\sigma_1,\cdots,\sigma_n, \sigma_{n+1})
\end{equation}
is termed a fundamental interval of rank $n$. According to (\ref{|I_n|}), if $n+1\neq n_k$ for any $k\ge 1$, then
\begin{equation}\label{|J(1)|}
  |J(\sigma_1,\cdots,\sigma_n)|=\frac{1}{2^{\sigma_1+\cdots+\sigma_n}}\left(1-\frac{1}{2^M}\right).
\end{equation}
If $n+1=n_k$ for some $k\ge 1$, then
\begin{equation}\label{|J(2)|}
  |J(\sigma_1,\cdots,\sigma_n)|=\frac{1}{2^{\sigma_1+\cdots+\sigma_n}}\left(\frac{1}{2^{\lfloor \alpha n_k\rfloor}}-\frac{1}{2^{\lfloor(1+1/k)\alpha n_k\rfloor}}\right).
\end{equation}
It is easily seen that
\begin{equation}\label{F_M(B)}
  F_M(\alpha )=\bigcap_{n\ge 1}\bigcup_{(\sigma_1,\cdots,\sigma_{n})\in D_n}J(\sigma_1,\cdots,\sigma_n).
\end{equation}

\subsubsection{A mass distribution on $F_M(\alpha )$}
In this subsection, we define a probability measure supported on $F_M(\alpha )$ and give an upper estimation of $\mu(J(\sigma_1,\cdots,\sigma_{n}))$ for each $(\sigma_1,\cdots,\sigma_{n})\in D_n$, by using the length $|J(\sigma_1,\cdots,\sigma_{n})|$.

\begin{proposition}\label{Lemma}
  Let $\epsilon>0$. There exists a probability measure $\mu$ supported on $F_M(\alpha )$ such that
  \begin{equation}\label{mu C}
    \mu(J(\sigma_1,\cdots,\sigma_{n}))\le C|J(\sigma_1,\cdots,\sigma_{n})|^{(1-\epsilon)s_M(\alpha )},
  \end{equation}
  where $C$ is a constant depending on $M$, $\epsilon$ and $\alpha $.
\end{proposition}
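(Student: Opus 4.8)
The plan is to construct $\mu$ by distributing mass consistently across the fundamental intervals and then to compare $\mu(J(\sigma_1,\dots,\sigma_n))$ with $|J(\sigma_1,\dots,\sigma_n)|$ using the two length formulas (\ref{|J(1)|}) and (\ref{|J(2)|}). First I would define $\mu$ recursively on the fundamental intervals of rank $n$: set $\mu((0,1])=1$, and having assigned mass to $J(\sigma_1,\dots,\sigma_n)$, split it among the children $J(\sigma_1,\dots,\sigma_n,\sigma_{n+1})$ in proportion to a natural weight. Specifically, when $n+1\neq n_k$, the admissible $\sigma_{n+1}$ range over $\{1,\dots,M\}$ and I would use the weight $\bigl(2^{-(\alpha+\sigma_{n+1})}\bigr)^{s_M(\alpha)}$ — which sums to $1$ by the defining equation of $s_M(\alpha)$ — i.e. put
\begin{equation*}
\mu(J(\sigma_1,\dots,\sigma_n,\sigma_{n+1}))=\mu(J(\sigma_1,\dots,\sigma_n))\cdot 2^{-s_M(\alpha)(\alpha+\sigma_{n+1})}.
\end{equation*}
When $n+1=n_k$ there is only one child (the digit $d_{n_k}$ varies but yields a single fundamental interval after the next step, since the constraint on $d_{n_k}$ defines one fundamental interval), so that step carries all the mass forward. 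By Kolmogorov consistency this defines a Borel probability measure $\mu$ supported on $F_M(\alpha)$ via (\ref{F_M(B)}).

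Next I would derive a closed form for $\mu(J(\sigma_1,\dots,\sigma_n))$ by telescoping the recursion. If among $1,\dots,n$ the indices equal to some $n_k$ are $n_{1},\dots,n_{k_0}$ (so $n_{k_0}\le n<n_{k_0+1}$), then
\begin{equation*}
\mu(J(\sigma_1,\dots,\sigma_n))=\prod_{\substack{1\le j\le n\\ j\neq n_k}} 2^{-s_M(\alpha)(\alpha+\sigma_j)}
= 2^{-s_M(\alpha)\bigl(\sum_{j\le n, j\neq n_k}(\alpha+\sigma_j)\bigr)}.
\end{equation*}
On the other hand, from (\ref{|J(1)|}) or (\ref{|J(2)|}), $|J(\sigma_1,\dots,\sigma_n)| = 2^{-(\sigma_1+\cdots+\sigma_n)}\cdot \theta_n$, where $\theta_n\in\{1-2^{-M},\ 2^{-\lfloor\alpha n_k\rfloor}-2^{-\lfloor(1+1/k)\alpha n_k\rfloor}\}$. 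Raising to the power $(1-\epsilon)s_M(\alpha)$ and comparing, the desired inequality (\ref{mu C}) reduces to bounding
\begin{equation*}
\frac{\mu(J(\sigma_1,\dots,\sigma_n))}{|J(\sigma_1,\dots,\sigma_n)|^{(1-\epsilon)s_M(\alpha)}}
\le 2^{s_M(\alpha)\bigl[(1-\epsilon)(\sigma_1+\cdots+\sigma_n) - \sum_{j\neq n_k}(\alpha+\sigma_j)\bigr]}\cdot \theta_n^{-(1-\epsilon)s_M(\alpha)}
\end{equation*}
by a constant $C=C(M,\epsilon,\alpha)$. The exponent in the first factor is controlled because the "missing" terms are exactly the $k_0$ digits at positions $n_k$, each at most $\lfloor(1+1/k)\alpha n_k\rfloor\le 2\alpha n_k$, and $\sum_{k\le k_0} n_k$ is negligible compared to $n_{k_0}\le n$ by the growth condition $n_1+\cdots+n_k<\tfrac{1}{k+1}n_{k+1}$ in (\ref{n_k}); the $\epsilon$-slack $\epsilon(\sigma_1+\cdots+\sigma_n)$ absorbs the linear-in-$n$ loss of $\alpha n$ from the $-\alpha$ terms once $n$ is large. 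For the second factor $\theta_n^{-(1-\epsilon)s_M(\alpha)}$: at ordinary ranks $\theta_n = 1-2^{-M}$ is a fixed constant, and at ranks $n+1=n_k$ the factor $2^{-\lfloor\alpha n_k\rfloor}-2^{-\lfloor(1+1/k)\alpha n_k\rfloor}\ge \tfrac12 2^{-\lfloor\alpha n_k\rfloor}$ (using $\lfloor(1+1/k)\alpha n_k\rfloor-\lfloor\alpha n_k\rfloor>1$ from (\ref{n_k})), so $\theta_n^{-(1-\epsilon)s_M(\alpha)}\le 2^{(1-\epsilon)s_M(\alpha)(\lfloor\alpha n_k\rfloor+1)}$, which is again of the form $2^{O(\alpha n_k)}$ and hence summable into the previous estimate. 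Collecting, the total exponent is bounded above uniformly in $n$ by a constant, giving (\ref{mu C}).

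The main obstacle is the bookkeeping at the special ranks $n_k$: there the measure formula omits the digit $\sigma_{n_k}$ while the length formula does not, so one must check that the accumulated discrepancy over all $n_k\le n$ stays bounded (not merely $o(n)$) after dividing by $|J|^{(1-\epsilon)s_M(\alpha)}$. This is where the precise growth condition $n_1+\cdots+n_k<\tfrac{1}{k+1}n_{k+1}$ is essential — it forces $\sum_{n_k\le n}\alpha n_k$ to be dominated by the single largest term, which in turn is $o(n)$, so the $\epsilon$-perturbation of the exponent $s_M(\alpha)$ down to $(1-\epsilon)s_M(\alpha)$ suffices to swallow it with room to spare. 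Once Proposition \ref{Lemma} is in hand, the mass distribution principle yields $\dim_{\mathrm H} F_M(\alpha)\ge (1-\epsilon)s_M(\alpha)$, and letting $\epsilon\to 0$ and then $M\to\infty$ (invoking Remark \ref{Remark}) gives the lower bound $\dim_{\mathrm H} F(\alpha)\ge s(\alpha)$.
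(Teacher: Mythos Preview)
Your construction of $\mu$ has a genuine gap at the special ranks $n_k$. You claim that ``when $n+1=n_k$ there is only one child \dots\ so that step carries all the mass forward,'' but this is not the case: by the definition of $D_{n_k}$ the digit $\sigma_{n_k}$ ranges over $\{\lfloor\alpha n_k\rfloor+1,\dots,\lfloor(1+1/k)\alpha n_k\rfloor\}$, which by condition~(\ref{n_k}) contains strictly more than one value. Consequently $J(\sigma_1,\dots,\sigma_{n_k-1})$ has $N_k:=\lfloor(1+1/k)\alpha n_k\rfloor-\lfloor\alpha n_k\rfloor>1$ distinct children $J(\sigma_1,\dots,\sigma_{n_k})$ at rank $n_k$. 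Your closed formula $\mu(J(\sigma_1,\dots,\sigma_n))=\prod_{j\le n,\ j\neq n_k}2^{-s_M(\alpha)(\alpha+\sigma_j)}$ assigns to each such child the \emph{same} mass as its parent, so the children's masses sum to $N_k\cdot\mu(\text{parent})\neq\mu(\text{parent})$; the set function you wrote down is not additive and hence does not extend to a probability measure, so the mass distribution principle cannot be invoked.

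The fix is exactly what the paper does: at each special rank divide uniformly, setting
\[
\mu(J(\sigma_1,\dots,\sigma_{n_k}))=\frac{1}{N_k}\,\mu(J(\sigma_1,\dots,\sigma_{n_k-1})),
\]
which makes $\mu$ consistent. This inserts the extra factor $\prod_{j\le k_0}N_j^{-1}\le 1$ into your closed form; since it only decreases $\mu$, your subsequent comparison with $|J|^{(1-\epsilon)s_M(\alpha)}$ goes through with no loss. After this correction your approach coincides with the paper's: the same weights $2^{-s_M(\alpha)(\alpha+\sigma_j)}$ at ordinary positions, the same exponent bookkeeping using (\ref{n_k}) to show $\sum_{j=n_k\le n}\sigma_{n_j}$ is absorbed by the $\epsilon$-slack. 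The paper then splits the comparison into the three cases $n=n_k$, $n+1=n_k$, and $n_{k-1}<n<n_k-1$ to handle the two length formulas (\ref{|J(1)|})--(\ref{|J(2)|}) cleanly; your single unified estimate is correct in outline but you would benefit from treating the case $n+1=n_k$ separately, since there $\theta_n\asymp 2^{-\alpha n_k}$ and one must check that the extra $\alpha n_k$ in the exponent of $|J|$ is matched on the $\mu$-side (it is, because the product over $j\in\Delta_n$ contains roughly $n_k$ factors each contributing $\alpha$).
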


\begin{proof}

We define a set function $\mu:\{J(\sigma):\sigma\in D\}\to \mathbb{R}^{+}$ as follows.
For any $n<n_1$ and $(\sigma_1,\cdots, \sigma_n)\in D_n$, let
\begin{equation*}
  \mu(J(\sigma_1,\cdots, \sigma_n))=\prod_{i=1}^n\left(\frac{1}{2^\alpha 2^{\sigma_i}}\right)^{s_M(\alpha )}.
\end{equation*}
For any $(\sigma_1,\cdots,\sigma_{n_1})\in D_{n_1}$, set
$$\mu(J(\sigma_1,\cdots,\sigma_{n_1}))=\frac{1}{\lfloor2\alpha n_1\rfloor-\lfloor \alpha n_1\rfloor}\mu(J(\sigma_1,\cdots,\sigma_{n_1-1})).$$

 Once $\mu(J(\sigma_1,\cdots,\sigma_{n_k}))$ has been defined for some $k>1$ and any $(\sigma_1,\cdots,\sigma_{n_k})\in D_{n_k}$, we define 
$$\mu(J(\sigma_1,\cdots,\sigma_n))=\mu(J(\sigma_1,\cdots,\sigma_{n_k}))\prod_{i=n_{k}+1}^{n}\left(\frac{1}{2^{\alpha} 2^{\sigma_i}}\right)^{s_M(\alpha )},$$
 for any $n_{k}<n<n_{k+1}$ and any $(\sigma_1,\cdots,\sigma_n)\in D_n$,
and
$$\mu(J(\sigma_1,\cdots,\sigma_{n_{k+1}}))=\frac{1}{\lfloor(1+1/(k+1))\alpha n_{k+1}\rfloor-\lfloor \alpha n_{k+1}\rfloor}\mu(J(\sigma_1,\cdots,\sigma_{n_{k+1}-1})),$$
for any $(\sigma_1,\cdots,\sigma_{n_{k+1}})\in D_{n_{k+1}}$.

It can be verified that for any $n\ge 1$ and $(\sigma_1,\cdots,\sigma_n)\in D_n$, we have
$$\mu(J(\sigma_1,\cdots,\sigma_n))=\sum_{\sigma_{n+1}:(\sigma_1,\cdots,\sigma_{n+1})\in D_{n+1}}\mu(J(\sigma_1,\cdots,\sigma_n,\sigma_{n+1})),$$
and
$$\sum_{\sigma_1\in D_1}\mu(J(\sigma_1))=1.$$
Thus $\mu$ can be extended to a probability measure supported on $F_M(\alpha )$ due to Kolmogorov's extension theorem. We denote the  measure still by $\mu$.
 By the definition of $\mu$, for any $k\ge 1$, $ n_{k}\leq n < n_{k+1}$ and any $(\sigma_1,\cdots,\sigma_{n})\in D_{n}$, we have
\begin{align}\label{mu J} \mu(J(\sigma_1,\cdots,\sigma_{n}))=\prod_{j=1}^{k}\frac{1}{\lfloor(1+1/j)\alpha n_j\rfloor-\lfloor \alpha n_j\rfloor}\prod_{i\in \Delta_{n}}\left(\frac{1}{2^\alpha 2^{\sigma_i}}\right)^{s_M(\alpha )},
\end{align}
where $\Delta_n:=\{1\le i\le n:i\neq n_j ~\text{for any} ~ j\}$. 

Now, we proceed with the proof of Proposition \ref{Lemma} by distinguishing three cases.

\textbf{Case 1}. $n=n_k$ for some $k\ge 1$. In this case, by (\ref{|J(1)|}) and (\ref{mu J}), we have
\begin{align*}
  \mu(J(\sigma_1,\cdots,\sigma_{n})) & =\prod_{j=1}^{k}\frac{1}{\lfloor(1+1/j)\alpha n_j\rfloor-\lfloor \alpha n_j\rfloor}\prod_{i\in \Delta_{n}}\left(\frac{1}{2^\alpha 2^{\sigma_i}}\right)^{s_M(\alpha )} \\
   & = \prod_{j=1}^{k}\frac{1}{\lfloor(1+1/j)\alpha n_j\rfloor-\lfloor \alpha n_j\rfloor} 2^{-\left(\sum_{i\in \Delta_{n}}\sigma_i+\alpha (n-k)\right)s_M(\alpha )},
\end{align*}
and
$$|J(\sigma_1,\cdots,\sigma_{n})|=2^{-\sum_{i=1}^{n}\sigma_i}\left(1-\frac{1}{2^M}\right).$$
Note that
\begin{equation*}
  \lim_{k\to \infty}\frac{\sum_{j=1}^{k}\sigma_{n_j}}{n_k}= \alpha \qquad \text{and} \qquad
  \lim_{k\to \infty}\frac{\sum_{i=1}^{n}\sigma_{i}}{n_k}\ge 1+\alpha.
\end{equation*}
For any $\epsilon>0$, 
 there exists $k_0$ depending on $M$, $\alpha$ and $\epsilon$, such that for all $k>k_0$, we have
$$\sum_{i=1}^{n}\sigma_i(1-\epsilon)s_M(\alpha )\le \sum_{i\in \Delta_n}\sigma_i s_M(\alpha )+\alpha s_M(\alpha )(n-k).$$
Hence, there exists $c_0>0$ such that
\begin{equation}\label{mu(J1)}
  \mu(J(\sigma_1,\cdots,\sigma_{n}))\le c_0|J(\sigma_1,\cdots,\sigma_{n})|^{(1-\epsilon)s_M(\alpha )}.
\end{equation}

\textbf{Case 2}. $n+1=n_k$ for some $k\ge 1$. In this case, by (\ref{|J(2)|}) and (\ref{mu J}), we have
\begin{align*}
  \mu(J(\sigma_1,\cdots,\sigma_{n}))
   & = \prod_{j=1}^{k-1}\frac{1}{\lfloor(1+1/j)\alpha n_j\rfloor-\lfloor \alpha n_j\rfloor} 2^{-\left(\sum_{i\in \Delta_n}\sigma_i+\alpha \left(n-k+1\right)\right)s_M(\alpha )},\\
   |J(\sigma_1,\cdots,\sigma_{n})|&=2^{-\sum_{i=1}^{n}\sigma_i}\left(\frac{1}{2^{\lfloor \alpha n_k\rfloor}}-\frac{1}{2^{\lfloor(1+1/k)\alpha n_k\rfloor}}\right).
\end{align*}
Thus, there exists $c_1>0$ such that
$$c_1 2^{-\sum_{i=1}^{n}\sigma_i}\frac{1}{2^{\lfloor\alpha n_k\rfloor}}\le |J(\sigma_1,\cdots,\sigma_{n})|\le  2^{-\sum_{i=1}^{n}\sigma_i}\frac{1}{2^{\lfloor\alpha n_k\rfloor}}.$$
For any $\epsilon>0$, by choosing a sufficient large $k_0$, we have
$$\sum_{i\in \Delta_n}\sigma_i s_M(\alpha)+\alpha s_M(\alpha)(n-k+1)\ge \sum_{i=1}^{n}\sigma_i(1-\epsilon)s_M(\alpha)+\alpha n_k(1-\epsilon)s_M(\alpha),\qquad \forall k\ge k_0.$$
Then, there exists $c_2>0$ such that
\begin{equation}\label{mu(J2)}
  \mu(J(\sigma_1,\cdots,\sigma_{n}))\le c_2|J(\sigma_1,\cdots,\sigma_{n})|^{(1-\epsilon)s_M(\alpha)}.
\end{equation}

\textbf{Case 3}. $n_{k-1}<n< n_k-1$ for some $k>1$. By (\ref{|J(1)|}) and (\ref{mu J}), we have
\begin{align*}
  \mu(J(\sigma_1,\cdots,\sigma_{n})) & = \prod_{j=1}^{k-1}\frac{1}{\lfloor(1+1/j)\alpha n_j\rfloor-\lfloor \alpha n_j\rfloor} 2^{-\left(\sum_{i\in \Delta_n}\sigma_i+\alpha \left(n-k+1\right)\right)s_M(\alpha )}, \\
  |J(\sigma_1,\cdots,\sigma_{n})| & =2^{-\sum_{i=1}^{n}\sigma_i}\left(1-\frac{1}{2^M}\right).
\end{align*}
For any $\epsilon>0$, by choosing $k_0$ large enough, we have
$$\sum_{i=1}^{n}\sigma_i(1-\epsilon)s_M(\alpha )\le \sum_{i\in \Delta_n}\sigma_i s_M(\alpha )+\alpha s_M(\alpha )(n-k+1), \qquad \forall k\ge k_0.$$
Hence, there exists $c_3>0$ such that
\begin{equation}\label{mu(J3)}
  \mu(J(\sigma_1,\cdots,\sigma_{n}))\le c_3|J(\sigma_1,\cdots,\sigma_{n})|^{(1-\epsilon)s_M(\alpha )}.
\end{equation}
By taking $C=\max\{c_0, c_2, c_3\}$, we can deduce from (\ref{mu(J1)}), (\ref{mu(J2)}) and (\ref{mu(J3)}) that
$$\mu(J(\sigma_1,\cdots,\sigma_{n}))\le C|J(\sigma_1,\cdots,\sigma_{n})|^{(1-\epsilon)s_M(\alpha )}.$$
\end{proof}

\subsubsection{Estimation of gaps between fundamental intervals}
In this subsection, we focus on estimating the gaps between disjoint fundamental intervals, as defined in (\ref{J}), of the same rank.  For each $(\sigma_1,\cdots,\sigma_n)\in D_n$, we denote the distance between $J(\sigma_1,\cdots,\sigma_n)$ and the nearest fundamental interval of the same rank on its right (left, respectively) as $g^r(\sigma_1,\cdots,\sigma_n)$ ($g^{\ell}(\sigma_1,\cdots,\sigma_n)$, respectively).
We define the gap as $$g(\sigma_1,\cdots,\sigma_n):=\min\{g^r(\sigma_1,\cdots,\sigma_n), g^{\ell}(\sigma_1,\cdots,\sigma_n)\}.$$

\textbf{Case 1}: $n+1\neq n_k$ for any $k\ge 1$.
In this case, the left and right endpoints of $J(\sigma_1,\cdots,\sigma_n)$ are
$$\sum_{i=1}^{n}\frac{1}{2^{\sigma_1+\cdots+\sigma_i}}+\frac{1}{2^{\sigma_1+\cdots+\sigma_n+M}}\qquad\text{and}\qquad\sum_{i=1}^{n}\frac{1}{2^{\sigma_1+\cdots+\sigma_i}}+\frac{1}{2^{\sigma_1+\cdots+\sigma_n}}.$$
If there exists $1\le i_0\le n$ such that $(\sigma_1,\cdots,\sigma_{i_0}+1,\cdots,\sigma_n)\in D_n$ and $J(\sigma_1,\cdots,\sigma_{i_0}+1,\cdots,\sigma_n)$ is the nearest  fundamental interval of rank $n$ lying on the left of $J(\sigma_1,\cdots,\sigma_n)$, then $g^{\ell}(\sigma_1,\cdots,\sigma_n)$ is precisely the distance between the left endpoint of $J(\sigma_1,\cdots,\sigma_n)$ and the right endpoint of  $J(\sigma_1,\cdots,\sigma_{i_0}+1,\cdots,\sigma_n)$.
Thus,
\begin{align*}
  g^{\ell}(\sigma_1,\cdots,\sigma_n) & = \sum_{i=1}^{n}\frac{1}{2^{\sigma_1+\cdots+\sigma_i}}+\frac{1}{2^{\sigma_1+\cdots+\sigma_n+M}}
  -\left(\sum_{i=1}^{i_0-1}\frac{1}{2^{\sigma_1+\cdots+\sigma_i}}+\sum_{i=i_0}^{n}\frac{1}{2^{\sigma_1+\cdots+\sigma_i+1}}+\frac{1}{2^{\sigma_1+\cdots+\sigma_n+1}}\right)\\
   & = \sum_{i=i_0}^{n-1}\frac{1}{2^{\sigma_1+\cdots+\sigma_i+1}}+\frac{1}{2^{\sigma_1+\cdots+\sigma_n+M}}
   \ge \frac{1}{2^{\sigma_1+\cdots+\sigma_n+M}}.
\end{align*}

If there exists $1\le i_1\le n$ such that $(\sigma_1,\cdots,\sigma_{i_1}-1,\cdots,\sigma_n)\in D_n$ and $J(\sigma_1,\cdots,\sigma_{i_1}-1,\cdots,\sigma_n)$ is the nearest  fundamental interval of rank $n$ on the right of $J(\sigma_1,\cdots,\sigma_n)$, then $g^r(\sigma_1,\cdots,\sigma_n)$ is just the distance between the right endpoint of $J(\sigma_1,\cdots,\sigma_n)$ and the left endpoint of  $J(\sigma_1,\cdots,\sigma_{i_1}-1,\cdots,\sigma_n)$.
Thus,
\begin{align*}
  g^r(\sigma_1,\cdots,\sigma_n) & = \sum_{i=1}^{i_1-1}\frac{1}{2^{\sigma_1+\cdots+\sigma_i}}+\sum_{i=i_1}^{n}\frac{1}{2^{\sigma_1+\cdots+\sigma_i-1}}+\frac{1}{2^{\sigma_1+\cdots+\sigma_n+M-1}}
   -\left(\sum_{i=1}^{n}\frac{1}{2^{\sigma_1+\cdots+\sigma_i}}+\frac{1}{2^{\sigma_1+\cdots+\sigma_n}}\right)\\
  &=\sum_{i=i_1}^{n-1}\frac{1}{2^{\sigma_1+\cdots+\sigma_i}}+\frac{1}{2^{\sigma_1+\cdots+\sigma_n+M-1}}
   \ge\frac{1}{2^{\sigma_1+\cdots+\sigma_n+M-1}}.
\end{align*}
Hence,
\begin{equation}\label{G3}
  g(\sigma_1,\cdots,\sigma_n)\ge\frac{1}{2^{\sigma_1+\cdots+\sigma_n+M}}.
\end{equation}
\textbf{Case 2}: $n+1=n_k$ for some $k\ge 1$.

In this case, $g^{\ell}(\sigma_1,\cdots,\sigma_n)$ is larger than the distance between the left endpoint of $I(\sigma_1,\cdots,\sigma_n)$ and the left endpoint of $J(\sigma_1,\cdots,\sigma_n)$, $g^r(\sigma_1,\cdots,\sigma_n)$ is larger than the distance between the right endpoint of $I(\sigma_1,\cdots,\sigma_n)$ and the right endpoint of $J(\sigma_1,\cdots,\sigma_n)$. Thus
\begin{align*}
 g^{\ell}(\sigma_1,\cdots,\sigma_n) & \ge \sum_{i=1}^{n}\frac{1}{2^{\sigma_1+\cdots+\sigma_i}}+\frac{1}{2^{\sigma_1+\cdots +\sigma_n+\lfloor(1+1/k)\alpha n_k\rfloor}}-\sum_{i=1}^{n}\frac{1}{2^{\sigma_1+\cdots+\sigma_i}}\\
                     & = \frac{1}{2^{\sigma_1+\cdots+\sigma_n}}\cdot\frac{1}{2^{\lfloor(1+1/k)\alpha n_k\rfloor}},\\
 g^r(\sigma_1,\cdots,\sigma_n) & \ge \sum_{i=1}^{n-1}\frac{1}{2^{\sigma_1+\cdots+\sigma_i}}+\frac{1}{2^{\sigma_1+\cdots +\sigma_n-1}}
 -\left(\sum_{i=1}^{n}\frac{1}{2^{\sigma_1+\cdots+\sigma_i}}+\frac{1}{2^{\sigma_1+\cdots +\sigma_n+\lfloor \alpha n_k\rfloor}}\right)\\
                     & = \frac{1}{2^{\sigma_1+\cdots+\sigma_n}}\left(1-\frac{1}{2^{\lfloor \alpha n_k\rfloor}}\right).
\end{align*}
Hence
\begin{equation}\label{G4}
  g(\sigma_1,\cdots,\sigma_n)\ge \frac{1}{2^{\sigma_1+\cdots+\sigma_n}}\cdot\frac{1}{2^{\lfloor(1+1/k)\alpha n_k\rfloor}}.
\end{equation}

If $n+1\neq n_k$ for any $k\ge 1$, by (\ref{|J(1)|}) and (\ref{G3}), we have
$$g(\sigma_1,\cdots,\sigma_n)\ge \frac{1}{2^M-1}|J(\sigma_1,\cdots,\sigma_n)|.$$
If $n+1= n_k$ for some $k\ge 1$, by (\ref{|J(2)|}) and (\ref{G4}), we have
$$g(\sigma_1,\cdots,\sigma_n)\ge \frac{1}{2^{\lfloor(\alpha n_k )/k \rfloor+1}-1}|J(\sigma_1,\cdots,\sigma_n)|.$$

\subsubsection{Lower bound of $\dim_{\mathrm{H}}F_M(\alpha)$}
In this subsection, we apply the mass distribution principle \cite[Proposition 4.2]{falconer2004fractal}  to obtain the lower bound of $\dim_{\mathrm{H}}F_M(\alpha)$.
\begin{proposition}\label{lemma2}
  Let $F_M(\alpha)$ be defined as in (\ref{F_M(B)}). Then 
  $$\dim_{\mathrm{H}}F_M(\alpha)\ge  s_M(\alpha).$$
\end{proposition}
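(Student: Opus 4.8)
The plan is to apply the mass distribution principle \cite[Proposition 4.2]{falconer2004fractal} to the probability measure $\mu$ produced by Proposition \ref{Lemma} (for a fixed small $\epsilon>0$). It suffices to prove that there are constants $C'=C'(M,\alpha,\epsilon)$ and $r_0>0$ with
$$\mu(B(x,r))\le C'\,r^{(1-\epsilon)s_M(\alpha)/(1+\epsilon)}\qquad\text{for every }x\in F_M(\alpha)\text{ and }0<r<r_0;$$
this forces $\dim_{\mathrm{H}}F_M(\alpha)\ge (1-\epsilon)s_M(\alpha)/(1+\epsilon)$, and since $\epsilon>0$ is arbitrary, letting $\epsilon\downarrow 0$ yields $\dim_{\mathrm{H}}F_M(\alpha)\ge s_M(\alpha)$.

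Fix $x\in F_M(\alpha)$ with digits $\sigma_i=d_i(x)$, write $J_n(x)=J(\sigma_1,\dots,\sigma_n)$, and let $g_n(x)$ be the gap at $J_n(x)$ as in the previous subsection. For small $r$ let $n=n(x,r)$ be the largest integer with $g_n(x)>r$. Such an $n$ exists: it is finite because any rank-$m$ fundamental interval has a sibling inside its rank-$(m-1)$ parent (both $\{1,\dots,M\}$ and, by \eqref{n_k}, the admissible range at a level $n_k$ contain at least two digits), so $g_m(x)$ is at most the length of a rank-$(m-1)$ basic interval, which by \eqref{|I_n|} is $\le 2^{-(m-1)}\to 0$; and it is $\ge 1$ once $r$ is below a threshold depending only on $M,\alpha$, since $g_1(x)$ is uniformly bounded below on $F_M(\alpha)$. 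As distinct fundamental intervals of the same rank are disjoint and $r<g_n(x)$, the ball $B(x,r)$ meets no rank-$n$ fundamental interval other than $J_n(x)$; hence, using \eqref{mu C},
$$\mu(B(x,r))=\mu\bigl(B(x,r)\cap F_M(\alpha)\bigr)\le\mu(J_n(x))\le C\,|J_n(x)|^{(1-\epsilon)s_M(\alpha)}.$$

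It remains to bound $|J_n(x)|$ by a power of $r$ for $r$ small. By maximality $g_{n+1}(x)\le r$; combining this with the gap-to-length estimates of the previous subsection and the length formulas \eqref{|J(1)|}, \eqref{|J(2)|} — and, when $n+2=n_k$, with the sharper bound $g_{n+1}(x)\ge c\,2^{-M}|J_n(x)|$, which follows from the arrangement of the rank-$(n_k-1)$ fundamental intervals inside $J_n(x)$ described by Lemma \ref{Lemma endpoint} — one obtains $|J_n(x)|\le \kappa\, r$, where $\kappa\le 2^{2M}$ unless one of $n+1,n+2$ equals some $n_k$ (by \eqref{n_k} at most one can), in which case $\kappa\le C_M\,2^{(\alpha n_k)/k}$. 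If no special level intervenes this already gives $\mu(B(x,r))\le C(2^{2M}r)^{(1-\epsilon)s_M(\alpha)}$, of the required form. If instead $n+1=n_k$ or $n+2=n_k$, then $n\ge n_k-2$, so $n_k\le 2+\log_2(1/|J_n(x)|)$ because $|J_n(x)|\le 2^{-n}$; moreover, shrinking $r_0$ forces $k\ge k_0:=\lceil\alpha/\epsilon\rceil$ (for $k<k_0$ the index $n<n_{k_0}$ is bounded, so $\sigma_1+\dots+\sigma_n$, hence $|J_n(x)|$ and in turn $r$, would be bounded below). Thus $(\alpha n_k)/k\le \epsilon\bigl(2+\log_2(1/|J_n(x)|)\bigr)$, so $\kappa\le C_M'\,|J_n(x)|^{-\epsilon}$; feeding this into $|J_n(x)|\le\kappa r$ gives $|J_n(x)|^{1+\epsilon}\le C_M' r$, i.e.\ $|J_n(x)|\le(C_M' r)^{1/(1+\epsilon)}$, whence $\mu(B(x,r))\le C\,|J_n(x)|^{(1-\epsilon)s_M(\alpha)}\le C'\,r^{(1-\epsilon)s_M(\alpha)/(1+\epsilon)}$, as needed.

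The main obstacle throughout is precisely this last case: near a special level $n_k$ the fundamental intervals degenerate geometrically — consecutive rank-$n_k$ subintervals of a rank-$(n_k-1)$ fundamental interval may be separated by as little as $2^{-(\alpha n_k)/k}$ times the parent's length, and lengths can drop by a factor $2^{-\alpha n_k}$ in one step — so a crude use of the gap bounds would cost a non-negligible factor, and one must be careful to invoke the sharper separation estimate and the right comparison interval. What rescues the argument is that such configurations only occur when the rank $n$ is within $2$ of $n_k$, which by the extreme sparseness built into \eqref{n_k} forces $n_k$ to be comparable to $\log_2(1/|J_n(x)|)$; the offending factor $2^{(\alpha n_k)/k}$ is then at most $|J_n(x)|^{-\epsilon}$ on the relevant scales and is absorbed into the H\"older exponent, which tends to $s_M(\alpha)$ as $\epsilon\downarrow 0$.
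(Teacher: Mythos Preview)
Your proof is correct and follows the same overall strategy as the paper: apply the mass distribution principle to the measure $\mu$ of Proposition~\ref{Lemma}, locate the critical rank $n$ via the gap sequence, and absorb the degeneration near the special levels $n_k$ into an $\epsilon$-loss in the H\"older exponent that vanishes as $\epsilon\to 0$.

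The execution differs in two minor respects. First, the paper splits the case $n+1=n_k$ further according to whether $r\lessgtr\tfrac12|I(\sigma_1,\dots,\sigma_{n_k})|$ and in the large-$r$ subcase \emph{counts} how many rank-$n_k$ fundamental intervals meet $B(x,r)$; you bypass this by using only $\mu(B(x,r))\le\mu(J_n(x))$ together with the ratio $|J_n|/|J_{n+1}|\le C_M 2^{(\alpha n_k)/k}$, which is cleaner. Second, in the case $n+2=n_k$ you invoke a sharper separation bound $g_{n+1}(x)\ge c\,2^{-M}|J_n(x)|$ than the one recorded in the gap subsection (which only gives $g_{n+1}\gtrsim 2^{-(1+1/k)\alpha n_k}|I_{n+1}|$). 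Your sharper bound is indeed correct---a direct endpoint computation from Lemma~\ref{Lemma endpoint} shows that consecutive rank-$(n_k{-}1)$ fundamental intervals inside $J_n$ are separated by at least a fixed fraction of $|I_{n+1}|\ge 2^{-M}|I_n|$, and the boundary siblings ($\sigma_{n+1}\in\{1,M\}$) have their nearest neighbour outside $I_n$, giving an even larger gap---but since this is strictly stronger than what the preceding subsection establishes, it would be worth spelling out the one-line verification rather than leaving it as an appeal to Lemma~\ref{Lemma endpoint}. With that caveat, your argument and the paper's are equivalent, yielding exponents $(1-\epsilon)s_M(\alpha)/(1+\epsilon)$ and $(1-2\epsilon)s_M(\alpha)$ respectively, both converging to $s_M(\alpha)$.
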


\begin{proof}

For  $n\ge 1$ and $(\sigma_1,\cdots,\sigma_n)\in D_n$,
\begin{enumerate}[$\bullet$]
  \item  if $n+1\neq n_k$ for any $k\ge 1$, set
  \begin{equation}\label{G11}
  g_1(\sigma_1,\cdots,\sigma_n):=\frac{1}{2^{\sigma_1+\cdots+\sigma_n}2^M}\ge \frac{1}{2^M-1}|J(\sigma_1,\cdots,\sigma_{n})|,
\end{equation}
  \item if $n+1= n_k$ for some $k\ge 1$, set
  \begin{equation}\label{G12}
  g_1(\sigma_1,\cdots,\sigma_n):=\frac{1}{2^{\sigma_1+\cdots+\sigma_n}2^{\lfloor(1+1/k)\alpha n_k\rfloor}}\ge \frac{1}{2^{\lfloor(\alpha n_k)/k\rfloor+1}-1}|J(\sigma_1,\cdots,\sigma_{n})|.
\end{equation}
\end{enumerate}
Define $r_0=\min\limits_{1\le j\le n_{k_0}}\min\limits_{(\sigma_1,\cdots,\sigma_j)\in D_j}g_1(\sigma_1,\cdots,\sigma_j)$. For a fixed $x\in F_M(\alpha )$ and $0<r<r_0$ and set $\sigma_i=d_i(x)$. Let $\epsilon>0$, and $k_0=k_0(\epsilon)$ is defined as in Proposition \ref{Lemma}.
Then $x\in J(\sigma_1,\cdots,\sigma_k)$ for all $k\ge 1$ and there exists some $n\ge n_{k_0}$, such that
$$g_1(\sigma_1,\cdots,\sigma_n,\sigma_{n+1})\le r <g_1(\sigma_1,\cdots,\sigma_n).$$
By the definition of $g_1(\sigma_1,\cdots,\sigma_n)$,  the ball $B(x,r)$ can intersect only one fundamental interval  of rank $n$, which is precisely  $J(\sigma_1,\cdots,\sigma_n)$. To continue the proof, we distinguish three cases. 
\begin{enumerate}
  \item[\textbf{Case 1}:] $n+1=n_k$ for some $k\ge k_0$. We further distinguish two subcases.
\begin{itemize}
  \item [\textbf{Case 1.1}]$r\le \frac{1}{2}|I(\sigma_1,\cdots,\sigma_{n_k})|$. In this case, the ball $B(x,r)$ can intersect at most three basic intervals of rank $n_k$, which are $I(\sigma_1,\cdots,\sigma_{n_k}-1), I(\sigma_1,\cdots,\sigma_{n_k}), I(\sigma_1,\cdots,\sigma_{n_k}+1)$ (if exist). Hence, we have
      $$\mu(B(x,r))\le 3\mu(J(\sigma_1,\cdots,\sigma_{n_k})).$$
      By Proposition \ref{Lemma} and (\ref{G11}), we have
      \begin{equation}\label{mu B1}
      \begin{aligned}
        \mu(B(x,r))&\le 3C|J(\sigma_1,\cdots,\sigma_{n_k})|^{(1-\epsilon)s_M(\alpha )}\le 3(2^M-1)Cg_1(\sigma_1,\cdots, \sigma_{n_k})^{(1-\epsilon)s_M(\alpha )}
        \\&\le 3(2^M-1)Cr^{(1-\epsilon)s_M(\alpha )}.
        \end{aligned}
      \end{equation}
  \item [\textbf{Case 1.2}]$r\ge \frac{1}{2}|I(\sigma_1,\cdots,\sigma_{n_k})|$. Since
      $$|I(\sigma_1,\cdots\sigma_{n_k})|=\frac{1}{2^{\sigma_1+\cdots+\sigma_{n_k-1}}}\cdot\frac{1}{2^{\sigma_{n_k}}}\ge \frac{1}{2^{\sigma_1+\cdots+\sigma_{n_k-1}}}\cdot\frac{1}{2^{\lfloor(1+1/k)\alpha n_k\rfloor}},$$
      there are at  most $2r2^{\sigma_1+\cdots+\sigma_{n_k-1}}2^{\lfloor(1+1/k)\alpha n_k\rfloor}$
      fundamental intervals of rank $n_k$ contained in $J\left(\sigma_1,\cdots,\right.$\\
      $\left.\sigma_{n_k-1}\right)$ and intersecting the ball $B(x,r)$. Then,  by (\ref{|J(2)|}) and Proposition \ref{Lemma}, we have
      \begin{align*}
        &\mu(B(x,r))\le  \mu(J(\sigma_1,\cdots,\sigma_{n_k-1}))\min\left\{1,2r2^{\sigma_1+\cdots+\sigma_{n_k-1}}2^{\lfloor(1+1/k)\alpha n_k\rfloor}
        \frac{1}{\lfloor(1+1/k)\alpha n_k\rfloor-\lfloor \alpha n_k\rfloor}\right\} \\
         & \le C|J(\sigma_1,\cdots,\sigma_{n_k-1})|^{(1-\epsilon)s_M(\alpha )}\min\left\{1,2r2^{\sigma_1+\cdots+\sigma_{n_k-1}}
         2^{\lfloor(1+1/k)\alpha n_k\rfloor}\frac{1}{\lfloor(1+1/k)\alpha n_k\rfloor-\lfloor \alpha n_k\rfloor}\right\} \\
         & \le C|J(\sigma_1,\cdots,\sigma_{n_k-1})|^{(1-\epsilon)s_M(\alpha )}\left(2r2^{\sigma_1+\cdots+\sigma_{n_k-1}}2^{\lfloor(1+1/k)\alpha n_k\rfloor}
         \frac{1}{\lfloor(1+1/k)\alpha n_k\rfloor-\lfloor \alpha n_k\rfloor}\right)^{(1-2\epsilon)s_M(\alpha )} \\
         & \leq C\left(\frac{2r}{\lfloor(1+1/k)\alpha n_k\rfloor-\lfloor \alpha n_k\rfloor}\right)^{(1-2\epsilon)s_M(\alpha )}2^{-\sum_{i=1}^{n_k-1} \epsilon s_M(\alpha )\sigma_i}.
      \end{align*}
      Thus, there exists $c_4>0$ such that
      \begin{equation}\label{mu B2}
        \mu(B(x,r))\le c_4 r^{(1-2\epsilon)s_M(\alpha )}.
      \end{equation}

\end{itemize}

\item[\textbf{Case 2}:] $n+1 \neq n_k$ and $n+2 \neq n_k$ for any $k\ge k_0$.

For any $1\le \xi<\zeta\le M$, by the definition of $\mu$, we have
$$\frac{\mu(J(\sigma_1,\cdots,\sigma_n,\xi))}{\mu(J(\sigma_1,\cdots,\sigma_n,\zeta))} \le 2^{M-1}.$$
From Proposition \ref{Lemma} and (\ref{G11}), we deduce that
\begin{align}\label{mu B3}
  \mu(B(x,r)) &\nonumber\le M2^{M-1}\mu(J(\sigma_1,\cdots,\sigma_{n+1})) \\
   & \nonumber\le CM2^{M-1}|J(\sigma_1,\cdots,\sigma_{n+1})|^{(1-\epsilon)s_M(\alpha )} \\
   & \nonumber\le CM2^{2M-1}g_1(\sigma_1,\cdots,\sigma_{n+1})^{(1-\epsilon)s_M(\alpha )}\\
   & \le CM2^{2M-1}r^{(1-\epsilon)s_M(\alpha )}.
\end{align}

\item[\textbf{Case 3}:] $n+2 = n_k$ for any $k\ge k_0$.

Similar to Case 2,
we deduce from
Proposition \ref{Lemma} and (\ref{G12}) that
\begin{align}\label{mu B4}
  \mu(B(x,r)) &\nonumber\le M2^{M-1}\mu(J(\sigma_1,\cdots,\sigma_{n+1})) \\
   & \nonumber\le CM2^{M-1}|J(\sigma_1,\cdots,\sigma_{n+1})|^{(1-\epsilon)s_M(\alpha )} \\
   & \nonumber\le CM2^{M-1}(2^{\lfloor(\alpha n_k)/k \rfloor+1}-1)^{(1-\epsilon)s_M(\alpha )}
   g_1(\sigma_1,\cdots,\sigma_{n+1})^{(1-\epsilon)s_M(\alpha )}\\
   & \nonumber\le CM2^{M-1}g_1(\sigma_1,\cdots,\sigma_{n+1})^{(1-2\epsilon)s_M(\alpha )}\\
   &\le CM2^{M-1}r^{(1-2\epsilon)s_M(\alpha )}.
\end{align}

\end{enumerate}
 As a consequence of mass distribution principle \cite[Proposition 4.2]{falconer2004fractal} and (\ref{mu B1}) -- (\ref{mu B4}), we deduce that
$$\dim_{\mathrm{H}}F_M(\alpha )\ge (1-2\epsilon) s_M(\alpha ).$$
Taking $\epsilon \to 0$, we obtain
$$\dim_{\mathrm{H}}F_M(\alpha )\ge s_M(\alpha ).$$

\end{proof}
Since $F_M(\alpha )\subset F(\alpha )$, we have
$$\dim_{\mathrm{H}}F(\alpha )\ge \dim_{\mathrm{H}}F_M(\alpha )\ge s_M(\alpha ).$$
Letting $M\to \infty$, by Remark \ref{Remark}, we obtain
$$\dim_{\mathrm{H}}F(\alpha )\ge s(\alpha ).$$

\begin{corollary}\label{cor4.1}
  For any infinite set $\mathcal{N}\subset \mathbb{N}$,
  $$\dim_{\mathrm{H}}\{x\in (0,1]: d_n(x)\ge \alpha n ~~\text{i.m.} ~ n \in \mathcal{N}\}=s(\alpha).$$
\end{corollary}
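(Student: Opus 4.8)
The plan is to sandwich the dimension between $s(\alpha)$ from above and $s(\alpha)$ from below, re-using almost verbatim the proof of Theorem~\ref{addthm}. Write $E_{\mathcal{N}}=\{x\in(0,1]:d_n(x)\ge \alpha n~\text{i.m.}~n\in\mathcal{N}\}$.

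\textbf{Upper bound.} Since $\mathcal{N}\subset\mathbb{N}$, every $x\in E_{\mathcal{N}}$ satisfies $d_n(x)\ge\alpha n$ for infinitely many $n\in\mathbb{N}$, so $E_{\mathcal{N}}\subset F(\alpha)$. By Theorem~\ref{addthm}, $\dim_{\mathrm H}E_{\mathcal{N}}\le\dim_{\mathrm H}F(\alpha)=s(\alpha)$.

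\textbf{Lower bound.} The key observation is that the construction of $F_M(\alpha)$ in Section~5.2 uses the auxiliary sequence $\{n_k\}$ only through the two requirements in~(\ref{n_k}); it never uses that the $n_k$ exhaust $\mathbb{N}$ or form any particular set. So I would first check that $\{n_k\}$ can be chosen inside $\mathcal{N}$. Both conditions in~(\ref{n_k}) are threshold conditions on $n_{k+1}$: for fixed $k$ the quantity $\alpha(1+1/k)n_{k+1}-\alpha n_{k+1}=\alpha n_{k+1}/k\to\infty$ as $n_{k+1}\to\infty$, so $\lfloor\alpha(1+1/k)n_{k+1}\rfloor-\lfloor\alpha n_{k+1}\rfloor>1$ once $n_{k+1}$ is large, and $n_1+\cdots+n_k<n_{k+1}/(k+1)$ likewise holds once $n_{k+1}$ is large relative to the earlier terms. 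Hence one can inductively pick $n_{k+1}\in\mathcal{N}$ above the relevant threshold, using only that $\mathcal{N}$ is infinite. With such a sequence fixed, define $F_M(\alpha)$ exactly as in~(\ref{same}). For $x\in F_M(\alpha)$ and each $k$ we have $d_{n_k}(x)\ge\lfloor\alpha n_k\rfloor+1>\alpha n_k$ with $n_k\in\mathcal{N}$, so $x\in E_{\mathcal{N}}$; thus $F_M(\alpha)\subset E_{\mathcal{N}}$.

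Next I would invoke, unchanged, the mass distribution $\mu$ of Proposition~\ref{Lemma}, the gap estimates of the "Estimation of gaps" subsubsection, and the ball–measure bounds~(\ref{mu B1})--(\ref{mu B4}) of Proposition~\ref{lemma2}: all of these depend only on~(\ref{n_k}) and on the symbolic structure of $F_M(\alpha)$, so they carry over word for word, giving $\dim_{\mathrm H}F_M(\alpha)\ge s_M(\alpha)$. Therefore $\dim_{\mathrm H}E_{\mathcal{N}}\ge s_M(\alpha)$ for every $M$, and letting $M\to\infty$ together with Remark~\ref{Remark} gives $\dim_{\mathrm H}E_{\mathcal{N}}\ge s(\alpha)$, completing the proof. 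There is essentially no new obstacle here: the only genuinely new point is that~(\ref{n_k}) is compatible with $n_k\in\mathcal{N}$, which is immediate from the infinitude of $\mathcal{N}$; everything else is a line-by-line repetition of the argument for Theorem~\ref{addthm}, and in the write-up I would simply say so rather than reproduce it.
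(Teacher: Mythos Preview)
Your proposal is correct and matches the paper's own argument: the paper's proof consists of the single sentence that the result follows by choosing the sequence $\{n_k\}$ in the proof of Theorem~\ref{addthm} so that $n_k\in\mathcal{N}$ (for $k\ge 2$), which is precisely your lower-bound strategy, and the upper bound via $E_{\mathcal{N}}\subset F(\alpha)$ is implicit. Your write-up is in fact more detailed than the paper's, spelling out why~(\ref{n_k}) can be satisfied within $\mathcal{N}$; the only trivial slip is that when inductively choosing $n_{k+1}$ the first condition of~(\ref{n_k}) reads $\lfloor\alpha(1+1/(k+1))n_{k+1}\rfloor-\lfloor\alpha n_{k+1}\rfloor>1$ rather than with $1/k$, but the argument is unaffected.
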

\begin{proof}
  It follows by choosing the integer sequence $\{n_k\}_{k\ge 1}$ satisfying (\ref{n_k}) and $n_k\in \mathcal{N}$ for all $k\ge 2$ in the proof of Theorem \ref{addthm}.
\end{proof}

\subsection{Hausdorff dimension of $F(\phi)$}
\begin{proof}[Proof of Theorem \ref{main theorem im}]

  \begin{enumerate}
    \item [(1)]If $\alpha=0$, then for any $\varepsilon>0$, $\phi(n)\le \varepsilon n$ holds for infinitely many $n$'s. Let
        $$\mathcal{A}=\{n:\phi(n)\le \varepsilon n\}.$$
        Then
        $$\{x\in (0,1]:d_n(x)\ge \varepsilon n,~~\text{i.m.}~n\in \mathcal{A}\}\subset F(\phi).$$
        By Lemma \ref{Lemma s(B)} and Corollary \ref{cor4.1}, we have
        $$\dim_{\mathrm{H}}F(\phi)\ge \sup_{\varepsilon>0}s(\varepsilon)=1,$$
        thus $\dim_{\mathrm{H}}F(\phi)=1$.
    \item [(2)]If $0<\alpha<\infty$, then for any $\varepsilon>0$, $\phi(n)\le (\alpha+\varepsilon) n$ holds for infinitely many $n$'s. On the other hand, there exists $N\in \mathbb{N}$ such that for all $n\ge N$, we have $(\alpha-\varepsilon)n\le \phi(n)$. Let
        $$\mathcal{A}=\{n:\phi(n)\le (\alpha+\varepsilon) n\}.$$
        Then,
        $$\{x\in (0,1]:d_n(x)\ge (\alpha+\varepsilon) n,~~\text{i.m.}~n\in \mathcal{A}\}\subset F(\phi)\subset \{x\in I:d_n(x)\ge (\alpha-\varepsilon) n,~~\text{i.m.}~n\}.$$
        By Lemma \ref{Lemma s(B)} and Corollary \ref{cor4.1} again, we have
        $$s(\alpha)=\lim_{\varepsilon\to 0}s(\alpha+\varepsilon)\le \dim_{\mathrm{H}}F(\phi)\le \lim_{\varepsilon\to 0}s(\alpha-\varepsilon)=s(\alpha).$$
    \item [(3)]If $\alpha=\infty$, then for any $\alpha_1\ge 0$, $\phi(n)\ge \alpha_1n$ holds ultimately. Then
    $$F(\phi)\subset \{x\in (0,1] :d_n(x)\ge \alpha_1n,~~\text{i.m.}~n\}.$$
    Thus,
    $$ \dim_{\mathrm{H}}F(\phi)\le s(\alpha_1).$$
    Letting $\alpha_1\to \infty$ gives
    $$\dim_{\mathrm{H}}F(\phi)=0.$$
  \end{enumerate}
\end{proof}

\section{Proof of Theorem \ref{main theorem 2}}
For any $r>0$ and $\alpha >0$, recall
$$L_n(x)=\max\{d_i(x):1\le i\le n\},$$
and
$$E(r,\alpha)=  \left\{x\in (0,1]:\lim_{n\to \infty}\frac{L_n(x)}{n^r}=\alpha\right\}.$$
The proof falls naturally into three parts: $r>1$, $0<r<1$ and $r=1$.

\subsection{The case $r>1$}
Note that for any $x\in E(r,\alpha)$ and $\epsilon>0$, we have 
$$L_n(x)\ge (1-\epsilon)\alpha n^r~~\text{i.m.}~n.$$
By Lemma \ref{use3}, we obtain
$$d_n(x)\ge (1-\epsilon)\alpha n^r~~\text{i.m.}~n.$$
Therefore,
$$E(r,\alpha)\subset \{x\in (0,1]: d_n(x)\ge (1-\epsilon)\alpha n^r~~\text{i.m.}~n\}.$$
By Theorem \ref{main theorem im}, we have
$$\dim_{\mathrm{H}}E(r,\alpha)=0.$$

\subsection{The case $0<r<1$}
  In this case, it is evident that $\dim_{\mathrm{H}}E(r,\alpha)\le 1$. To establish the lower bound of $\dim_{\mathrm{H}}E(r,\alpha)$, we will construct a subset $E_M(r,\alpha)$ of $E(r,\alpha)$ and demonstrate that the Hausdorff dimension of $E_M(r,\alpha)$ can approximate $1$.

Fix an integer $M\ge 2$. Let $t>0$ be an integer such that $r+\frac{1}{t}<1$. For each $k\ge 1$, let $n_k=k^t$. Write
\begin{align*}
  E_M(r,\alpha)=\{ & x\in (0,1]: d_{n_k}(x)=\lfloor\alpha n_k^r\rfloor ~\text{for all}~k\ge 1~\text{and}~
    1\le d_i(x)\le M~\text{for all}~i\neq n_k\}.
\end{align*}

\begin{proposition}\label{subset}
  Let $0<r<1$ and $\alpha>0$. Then $ E_M(r,\alpha)\subset  E(r,\alpha)$.
\end{proposition}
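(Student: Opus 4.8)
The plan is to show directly that every $x\in E_M(r,\alpha)$ satisfies $\lim_{n\to\infty} L_n(x)/n^r=\alpha$. Fix such an $x$ and write $d_i=d_i(x)$. Since $1\le d_i\le M$ for all $i\notin\{n_k\}$ and $d_{n_k}=\lfloor\alpha n_k^r\rfloor$, the maximal digit $L_n(x)$ among the first $n$ digits is, for $n$ large, governed entirely by the special digits: indeed $L_n(x)=\max\{M,\ \max\{\lfloor\alpha n_k^r\rfloor : n_k\le n\}\}$, and since $\lfloor\alpha n_k^r\rfloor\to\infty$ we have, for all sufficiently large $n$,
\begin{equation*}
  L_n(x)=\max\{\lfloor\alpha n_k^r\rfloor : n_k\le n\}=\lfloor\alpha n_{k(n)}^r\rfloor,
\end{equation*}
where $k(n):=\max\{k\ge 1 : n_k\le n\}=\max\{k : k^t\le n\}=\lfloor n^{1/t}\rfloor$ (the last equality because $n_k=k^t$ is increasing in $k$).

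Next I would estimate $n_{k(n)}$ relative to $n$. By definition of $k(n)$ we have $n_{k(n)}\le n< n_{k(n)+1}$, i.e. $k(n)^t\le n<(k(n)+1)^t$, so
\begin{equation*}
  \frac{n_{k(n)}}{n}=\frac{k(n)^t}{n}\ \ge\ \left(\frac{k(n)}{k(n)+1}\right)^t\ \xrightarrow[n\to\infty]{}\ 1,
\end{equation*}
and of course $n_{k(n)}/n\le 1$. Hence $n_{k(n)}/n\to 1$ as $n\to\infty$. Raising to the power $r$ preserves this: $n_{k(n)}^r/n^r\to 1$. Combining with the displayed formula for $L_n(x)$,
\begin{equation*}
  \frac{L_n(x)}{n^r}=\frac{\lfloor\alpha n_{k(n)}^r\rfloor}{n^r}
  =\alpha\cdot\frac{n_{k(n)}^r}{n^r}+\frac{\lfloor\alpha n_{k(n)}^r\rfloor-\alpha n_{k(n)}^r}{n^r},
\end{equation*}
where the first term tends to $\alpha$ and the second is bounded in absolute value by $1/n^r\to 0$. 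Therefore $L_n(x)/n^r\to\alpha$, which is exactly $x\in E(r,\alpha)$.

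There is essentially no hard obstacle here; the one point that needs care is that $E_M(r,\alpha)$ is nonempty and well-defined, i.e. that the prescription $d_{n_k}=\lfloor\alpha n_k^r\rfloor$ together with $1\le d_i\le M$ for $i\notin\{n_k\}$ is consistent with the digit constraints of the P2DGL expansion. This is immediate: by the correspondence between $(0,1]$ and sequences in $\mathbb{N}^{\mathbb{N}}$ established in Section 2 (every sequence of positive integers arises as the digit sequence of a unique $x$), any such prescription determines a point, so $E_M(r,\alpha)$ is a genuine (and uncountable) subset of $(0,1]$. The role of the condition $r+1/t<1$ is not needed for this proposition — it will be used later, in the dimension estimate for $E_M(r,\alpha)$ — so I would simply note that the inclusion $E_M(r,\alpha)\subset E(r,\alpha)$ holds for every integer $t\ge 1$, and in particular for the chosen $t$.
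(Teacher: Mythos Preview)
Your proof is correct and follows essentially the same approach as the paper: both identify that for large $n$ the maximum $L_n(x)$ equals $\lfloor\alpha n_{k}^r\rfloor$ where $n_k\le n<n_{k+1}$, and both conclude by using $n_{k+1}/n_k\to 1$ (equivalently, your $n_{k(n)}/n\to 1$). Your observation that the hypothesis $r+1/t<1$ is not needed for this inclusion is also correct---the paper likewise does not invoke it here, reserving it for the subsequent H\"older estimate.
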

\begin{proof}
  Fix $x\in  E_M(r,\alpha)$. Choose $k_0$ large enough such that $\lfloor\alpha n_{k_0}^r\rfloor\ge M$. For any $n> n_{k_0}$, let $k>k_0$ be such that $n_k\le n <n_{k+1}$.  Consequently,
  $$L_n(x)=\max\{d_1(x),\cdots,d_n(x)\}=\lfloor\alpha n_k^r\rfloor.$$
  Then,
  $$\frac{\lfloor\alpha n_k^r\rfloor}{n_{k+1}^r}<\frac{L_n(x)}{n^r}\le \frac{\lfloor\alpha n_k^r\rfloor}{n_{k}^r}.$$
  Noting that
  $$\lim_{k\to \infty}\frac{n_{k+1}^r}{n_{k}^r}=1,$$
  we obtain
  $$\lim_{n\to \infty}\frac{L_n(x)}{n^r}=\alpha.$$
  This confirms that $E_M(r,\alpha)\subset  E(r,\alpha)$.
\end{proof}
Now, we provide a symbolic description of the structure of $E_M(r,\alpha)$. Let
  \begin{align*}
    D_n= & \{(\sigma_1,\cdots,\sigma_n)\in \mathbb{N}^n:\sigma_{n_k}(x)=\lfloor \alpha n_k^r\rfloor,~\text{for all}~k\ge 1, 1\le n_k\le n,\\
    &~\text{and}~1\le \sigma_i(x)\le M ~\text{for all}~i\neq n_k, 1\le i\le n\}, \\
    D= & \bigcup_{n=1}^{\infty}D_n.
  \end{align*}
  For any $n\ge 1$ and any $(\sigma_1,\cdots,\sigma_n)\in D_n$, recall that we denote $I_n(\sigma_1,\cdots,\sigma_n)$ as a basic interval of rank $n$, and
  $$J_n(\sigma_1,\cdots,\sigma_n)=\bigcup_{\sigma_{n+1}:(\sigma_1,\cdots, \sigma_{n+1})\in D_{n+1}}I_{n+1}(\sigma_1,\cdots,\sigma_n,\sigma_{n+1})$$
  as a fundamental interval of rank $n$. It can be verified that
  \begin{equation*}\label{E_M1}
    E_M(r,\alpha)=\bigcap_{n\ge 1}\bigcup_{(\sigma_1,\cdots,\sigma_n)\in D_n}I_n(\sigma_1,\cdots,\sigma_n).
  \end{equation*}

  For any $n\ge 1$, let $\tau(n)=\#\{k: n_k\le n\}$. Since $n_k=k^t$, we have $\tau(n)\le n^{\frac{1}{t}}$ for large $n$. For any $(\sigma_1,\cdots,\sigma_n)\in D_n$, let $(\overline{\sigma_1,\cdots,\sigma_n})$ be the block obtained by eliminating the terms $\{\sigma_{n_k}:1\le k\le \tau(n)\}$ in $(\sigma_1,\cdots,\sigma_n)$.
  Thus $(\overline{\sigma_1,\cdots,\sigma_n})$  is of length $n-\tau(n)$ and $(\overline{\sigma_1,\cdots,\sigma_n})\in \mathcal{A}^{n-\tau(n)}$, where $\mathcal{A}=\{1,2,\cdots,M\}$.
   Set
  $$\overline{I_n}(\sigma_1,\cdots,\sigma_n)=I_{n-\tau(n)}(\overline{\sigma_1,\cdots,\sigma_n}).$$

   Fix $\alpha>0$. For any $x=(\sigma_1, \sigma_2,\cdots)\in E_M(r,\alpha)$, define a map $f:E_M(r,\alpha)\to E_M$ as
 $$f(x)=\tilde{x}=(\overline{\sigma_1,\sigma_2,\cdots}),$$
 where $E_M$ is defined as in (\ref{EM}) and $(\overline{\sigma_1,\sigma_2,\cdots})$ is the block obtained by eliminating the terms $\{\sigma_{n_k}:k\ge 1\}$ in $(\sigma_1,\sigma_n,\cdots)$. We will show that $f$ is almost Lipschitz on the set $E_M(r,\alpha)$.

 \begin{proposition}\label{Lipsichtz}
   For any $\epsilon>0$, $f$ is $1/(1+\epsilon)$-H\"{o}lder, i.e., for any $x,y\in E_M(r,\alpha)$,
   $$|f(x)-f(y)|\le 2^{\frac{2M}{1+\epsilon}}|x-y|^{\frac{1}{1+\epsilon}},$$
   if $x$ and $y$ are sufficiently close to each other.
 \end{proposition}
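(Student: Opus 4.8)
The plan is to compare the lengths of basic intervals for $x$ and $y$ in $E_M(r,\alpha)$ with the lengths of the corresponding basic intervals for $f(x)$ and $f(y)$ in $E_M$, exploiting that the "extra" digits $\sigma_{n_k}=\lfloor\alpha n_k^r\rfloor$ are sparse (their indices $n_k=k^t$ are spaced out) and bounded above by a power of $n$ smaller than $n$.

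First I would set up the comparison at the level of a single basic interval. Suppose $x,y\in E_M(r,\alpha)$ are close, and let $n$ be the largest rank such that $x$ and $y$ lie in a common basic interval $I_n(\sigma_1,\dots,\sigma_n)$ of $E_M(r,\alpha)$; thus they agree on the first $n$ digits but differ at digit $n+1$. By Lemma \ref{Lemma endpoint}, $|x-y|$ is comparable to $|I_n(\sigma_1,\dots,\sigma_n)| = 2^{-(\sigma_1+\cdots+\sigma_n)}$, up to bounded multiplicative constants coming from the fact that consecutive basic intervals at rank $n+1$ can be separated by gaps of the same order (here the digits that vary are in $\{1,\dots,M\}$ or equal to $\lfloor\alpha n_{k}^r\rfloor$, so the relevant ratio is controlled by $2^{-M}$ or $2^{-\lfloor\alpha n_k^r\rfloor}$). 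Correspondingly $f(x),f(y)$ lie in a common basic interval $\overline{I_n}(\sigma_1,\dots,\sigma_n)=I_{n-\tau(n)}(\overline{\sigma_1,\dots,\sigma_n})$ of $E_M$, and $|f(x)-f(y)|$ is comparable to $2^{-\sum_{i\in\Delta_n}\sigma_i}$ where $\Delta_n=\{1\le i\le n: i\ne n_k\}$. The key arithmetic identity is then
\begin{equation*}
\sum_{i\in\Delta_n}\sigma_i = \sum_{i=1}^n\sigma_i - \sum_{k:\,n_k\le n}\lfloor\alpha n_k^r\rfloor,
\end{equation*}
so $|f(x)-f(y)| \asymp |x-y|\cdot 2^{\sum_{k:\,n_k\le n}\lfloor\alpha n_k^r\rfloor}$.

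Next I would estimate the exponent $\sum_{k:\,n_k\le n}\lfloor\alpha n_k^r\rfloor$. Writing $K=\tau(n)\le n^{1/t}$ (for $n$ large) and $n_k=k^t$, we get $\sum_{k=1}^{K}\lfloor\alpha k^{tr}\rfloor \le \alpha\sum_{k=1}^{K}k^{tr} \le C_\alpha K^{tr+1} \le C_\alpha n^{(tr+1)/t} = C_\alpha n^{r+1/t}$. Since $r+1/t<1$ by the choice of $t$, this exponent is $o(n)$; more precisely $2^{\sum_k\lfloor\alpha n_k^r\rfloor} \le 2^{C_\alpha n^{r+1/t}}$. On the other hand, all digits are at least $1$, so $\sigma_1+\cdots+\sigma_n\ge n$, whence $|x-y|\le 2^{-n}$, i.e. $n\le \log_2(1/|x-y|)$. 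Therefore, given any $\epsilon>0$, if $x$ and $y$ are close enough (so that $n$ is large and $C_\alpha n^{r+1/t}\le \epsilon n$), we obtain
\begin{equation*}
2^{\sum_{k:\,n_k\le n}\lfloor\alpha n_k^r\rfloor} \le 2^{\epsilon n} \le |x-y|^{-\epsilon}.
\end{equation*}
Combining with the comparison above and absorbing the bounded constants (the factor $2^{M}$ on each side of each comparison produces at most $2^{2M}$, raised to the Hölder exponent) yields $|f(x)-f(y)|\le 2^{2M/(1+\epsilon)}|x-y|^{1/(1+\epsilon)}$ after rewriting $|x-y|\cdot|x-y|^{-\epsilon}\le |x-y|^{1/(1+\epsilon)}$ for $|x-y|<1$ (since $1-\epsilon\ge 1/(1+\epsilon)$).

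The main obstacle I anticipate is bookkeeping the constants in the two-sided comparison between $|x-y|$ and the length of the common basic interval, on both the source and target sides, and in particular making sure the gap estimates (distance between adjacent fundamental/basic intervals of the same rank) are of the same order as the interval lengths — this is exactly the phenomenon already handled in the gap estimates for $F_M(\alpha)$ earlier, and the same reasoning applies here with $\{n_k\}=\{k^t\}$. One must also be slightly careful that $|x-y|$ could be much smaller than $|I_n|$ if $x,y$ sit deep inside a long run of equal digits at ranks $>n+1$; but since we chose $n$ to be the \emph{last} common rank, $x$ and $y$ differ at digit $n+1$, and the separation between the two rank-$(n+1)$ subintervals they occupy is comparable to $|I_n|$ times $2^{-M}$ (or $2^{-\lfloor\alpha n_{k}^r\rfloor}$ when $n+1=n_k$), which keeps $|x-y|\asymp |I_n|$ up to controlled constants; the asymmetry introduced when $n+1=n_k$ contributes an extra factor $2^{\lfloor\alpha n_k^r\rfloor}$, which is again absorbed into the $2^{\epsilon n}$ slack by the same estimate on $\sum_k\lfloor\alpha n_k^r\rfloor$. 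Once these constants are pinned down, the Hölder bound follows directly.
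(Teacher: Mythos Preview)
Your approach is the same as the paper's: locate the maximal common rank $n$, bound $|x-y|$ below by a constant times $|I_n|$, bound $|f(x)-f(y)|$ above by $|\overline{I_n}|$, and use $r+1/t<1$ to control the ratio. Two small points. First, the case $n+1=n_k$ cannot occur: the digit at position $n_k$ is forced to be $\lfloor\alpha n_k^r\rfloor$ for every point of $E_M(r,\alpha)$, so $x$ and $y$ would agree there, contradicting the maximality of $n$; hence your worry about an extra factor $2^{\lfloor\alpha n_k^r\rfloor}$ in the gap estimate is unnecessary, and the lower bound $|x-y|\ge 2^{-2M}|I_n|$ holds with a constant depending only on $M$. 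Second, your last inequality is backwards: $1-\epsilon<1/(1+\epsilon)$, so $|x-y|^{1-\epsilon}\ge |x-y|^{1/(1+\epsilon)}$ for $|x-y|<1$, and your argument as written only yields $(1-\epsilon)$-H\"older. This is enough for the dimension application after reparametrizing $\epsilon$, but to obtain the stated exponent directly, compare $|\overline{I_n}|^{1+\epsilon}$ with $|I_n|$ as the paper does: since $|\overline{I_n}|\le 2^{-(n-\tau(n))}$ and $|I_n|/|\overline{I_n}|=2^{-\sum_{k\le\tau(n)}\lfloor\alpha n_k^r\rfloor}\ge 2^{-C_\alpha n^{r+1/t}}$, one has $|\overline{I_n}|^\epsilon\le |I_n|/|\overline{I_n}|$ for all large $n$, hence $|\overline{I_n}|\le |I_n|^{1/(1+\epsilon)}$.
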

    
  \begin{proof}
    The proof will be divided into three steps.
    
    \textbf{Step 1:} Compare the lengths between $\overline{I_n}(\sigma_1,\cdots,\sigma_n)$ and $I_n(\sigma_1,\cdots,\sigma_n)$.
    
    For any $\epsilon>0$, since $r+\frac{1}{t}<1$, there exists $N_0=N_0(\epsilon)$ such that, for any $n\ge N_0$ and any $(\sigma_1,\cdots,\sigma_n)\in D_n$, we have
    \begin{equation*}
       |\overline{I_n}(\sigma_1,\cdots,\sigma_n)|^{\epsilon}
       =\left(\prod_{i=1\atop i\neq n_k, k\ge 1}^n\frac{1}{2^{\sigma_i}}\right)^{\epsilon}\le \frac{1}{2^{(n-\tau(n))\epsilon}}\le \frac{1}{2^{\alpha n^r \tau(n)}}.
    \end{equation*}
    Hence, for any $n\ge N_0$
    \begin{align}\label{compare}
      |{I_n}(\sigma_1,\cdots,\sigma_n)| &\nonumber =|\overline{I_n}(\sigma_1,\cdots,\sigma_n)|\prod_{i=1}^{\tau(n)}\frac{1}{2^{\sigma_{n_i}}}\ge |\overline{I_n}(\sigma_1,\cdots,\sigma_n)|2^{-\alpha n^r \tau(n)} \\
       & \ge |\overline{I_n}(\sigma_1,\cdots,\sigma_n)|^{1+\epsilon}.
    \end{align}
    
    \textbf{Step 2:}
    Estimate the distance between $x, y\in E_M(r,\alpha), x\neq y.$
     
     Let $x, y \in E_M(r,\alpha)$ with $x\neq y$. We can identify the greatest integer, denoted by $n_0$, for which $x, y$ are both contained in the same basic interval of rank $n_0$ $I_{n_0}(\sigma_1,\cdots,\sigma_{n_0})$, but not in the same basic interval of rank $n_0+1$. In other words, there exist $(\sigma_1,\sigma_2,\cdots,\sigma_{n_0})\in D_{n_0}$ and $\ell\neq r\in \mathbb{N}$ such that $(\sigma_1,\cdots,\sigma_{n_0},\ell)\in D_{n_0+1}$, $(\sigma_1,\cdots,\sigma_{n_0},r)\in D_{n_0+1}$ and $x\in J_{n_0+1}(\sigma_1,\cdots,\sigma_{n_0},\ell)$, $y\in J_{n_0+1}(\sigma_1,\cdots,\sigma_{n_0},r)$ respectively.

 Due to the maximality of $n_0$, we have ${n_0}+1\neq n_k$ for any $k\in \mathbb{N}$, otherwise $\sigma_{{n_0}+1}=\lfloor\alpha n_k\rfloor$ would hold for both $x$ and $y$ for some $k\ge 1$, contradicting the maximality of $n_0$. As a consequence,  the distance $|y-x|$  is at least the gap between $J_{{n_0}+1}(\sigma_1, \cdots, \sigma_{n_0}, \ell)$ and $J_{{n_0}+1}(\sigma_1, \cdots, \sigma_{n_0}, r)$. 

   Without loss of generality, we assume $x<y$.
   Then $1\le r<\ell\le m$. We distinguish two cases.
   \begin{itemize}
   \item [\textbf{Case 1}:]If $n+2=n_k$ for some $k\ge 1$, then $y-x$ exceeds the distance between the right endpoint of $J_{{n_0}+1}(\sigma_1,\cdots,\sigma_{n_0}, \ell)$ and the left endpoint of $J_{{n_0}+1}(\sigma_1,\cdots,\sigma_{n_0}, r)$. Thus by Lemma \ref{Lemma endpoint}, we have
         \begin{align*}
           |y-x| & \ge
           \left(\sum_{i=1}^{n}\frac{1}{2^{\sigma_1+\cdots+\sigma_i} } +\frac{1}{2^{\sigma_1+\cdots+\sigma_{n_0}+r}} +\frac{1}{2^{\sigma_1+\cdots+\sigma_{n_0}+r+\sigma_{n+2}}}\right) \\
            & - \left(\sum_{i=1}^{n}\frac{1}{2^{\sigma_1+\cdots+\sigma_i} } +\frac{1}{2^{\sigma_1+\cdots+\sigma_{n_0}+\ell}} +\frac{1}{2^{\sigma_1+\cdots+\sigma_{n_0}+\ell+\sigma_{n+2}-1}}\right)\\
            & = |I_{n_0}|\left(\frac{1}{2^{r}}-\frac{1}{2^{\ell}}
            +\frac{1}{2^{r+\sigma_{n+2}}}-\frac{1}{2^{\ell+\sigma_{n+2}-1}}\right) \\
            & \ge |I_{n_0}|\left(\frac{1}{2^r}-\frac{1}{2^{\ell}}\right)
             \ge \frac{|I_{n_0}|}{2^M}.
         \end{align*}

         \item [\textbf{Case 2}:]If $n+2\neq n_k$ for any $k\ge 1$,
         then $y-x$ is greater than the distance between the right endpoint of $J_{{n_0}+1}(\sigma_1,\cdots,\sigma_{n_0}, \ell)$ and the left endpoint of $J_{{n_0}+1}(\sigma_1,\cdots,\sigma_{n_0}, r)$. Thus
         \begin{align*}
           |y-x| & \ge \left(\sum_{i=1}^{n}\frac{1}{2^{\sigma_1+\cdots+\sigma_i} } +\frac{1}{2^{\sigma_1+\cdots+\sigma_{n_0}+r}} +\frac{1}{2^{\sigma_1+\cdots+\sigma_{n_0}+r+M}}\right) \\
            & -\left(\sum_{i=1}^{n}\frac{1}{2^{\sigma_1+\cdots+\sigma_i} } +\frac{1}{2^{\sigma_1+\cdots+\sigma_{n_0}+\ell}} +\frac{1}{2^{\sigma_1+\cdots+\sigma_{n_0}+\ell}}\right) \\
            & = |I_{n_0}|\left(\frac{1}{2^{r}}-\frac{1}{2^{\ell-1}}
            +\frac{1}{2^{r+M}}\right) \ge \frac{|I_{n_0}|}{2^{2M}}.
         \end{align*}
   \end{itemize}
    Then,
     \begin{equation}\label{min}
       |y-x|\ge \frac{|I_{n_0}(\sigma_1,\cdots,\sigma_{n_0})|}{2^{2M}}.
     \end{equation}

 \textbf{Step 3:} Let
 $$\delta:=
    \frac{|I_{N_0}(\sigma_1,\cdots,\sigma_{N_0})|}{2^{2M}}>0,$$
 where $N_0$ is defined as in \textbf{Step 1}. Then it follows from (\ref{compare}) and (\ref{min}) that, when $x,y\in E_M(r,\alpha)$ with $|x-y|<\delta$, we have
 $$|f(x)-f(y)|\le 2^{\frac{2M}{1+\epsilon}}|x-y|^{\frac{1}{1+\epsilon}}.$$
  \end{proof}   
    
 Thus, by Lemma \ref{lemmasM1}, Proposition \ref{subset}, Proposition \ref{Lipsichtz} and \cite[Proposition 3.3]{falconer2004fractal}, we have
 $$\dim_{\mathrm{H}}E(r,\alpha)\ge \dim_{\mathrm{H}}E_M(r,\alpha)\ge \frac{1}{1+\epsilon}\dim_{\mathrm{H}}E_M= \frac{1}{1+\epsilon} \log_2(s_M).$$
 Letting $\epsilon\to 0$ and $M\to \infty$, we deduce from Lemma \ref{lemmasM1} that
 $$\dim_{\mathrm{H}}E(r,\alpha)=1.$$   
 
 \begin{proof}[Proof of Corollary \ref{coro1.1}]
   From the above proofs, if we replace $d_{n_k}=\lfloor\alpha  n_k^r\rfloor$ by $d_{n_k}=\lfloor\alpha \log n_k\rfloor$,  (\ref{compare}) and (\ref{min}) remain valid. Then, 
   $$\dim_{\mathrm{H}}\Big\{x\in (0,1]:\lim_{n\to \infty}\frac{L_n(x)}{\log_2 n}=\alpha\Big\}=1.$$
 \end{proof}

 \subsection{The case $r=1$}

 In this subsection, we set
 $$E(\alpha):=E(1,\alpha)=\left\{x\in (0,1]:\lim_{n\to \infty}\frac{L_n(x)}{n}=\alpha\right\}.$$
We have
 \begin{equation}\label{cover 1}
   E(\alpha)=\bigcap_{M=1}^{\infty}\bigcup_{N=1}^{\infty}\bigcap_{n\ge N}\left\{x\in (0,1]: \left(1-\frac{1}{2M+1}\right)\alpha n\le L_n(x)\le \left(1+\frac{1}{2M+1}\right)\alpha n\right\}.
 \end{equation}
  For fixed $N\ge 1$ and  $M>1$,  we have
 \begin{align}\label{cover 2}
    &\nonumber\bigcap\limits_{n\ge N} \left\{x\in (0,1]: \left(1-\frac{1}{2M+1}\right)\alpha n\le L_n(x)\le \left(1+\frac{1}{2M+1}\right)\alpha n\right\} \\
   &\subset  \bigcap\limits_{k=1}^{M}\left\{x\in (0,1]: \left(1-\frac{1}{2M+1}\right)\alpha kN\le L_{kN}(x)\le \left(1+\frac{1}{2M+1}\right)\alpha kN\right\}.
 \end{align}
For any $1\le k\le M$, set
 $$\Theta_N(k)=\{i_k\in \mathbb{N}: (k-1)N+1\le i_k \le kN\}.$$
 
  Note that 
 \[\left(1+\frac{1}{2M+1}\right)(k-1)< \left(1-\frac{1}{2M+1}\right)k,\qquad \forall 1\le k\le M.\]
 Then for any $x$ in right-hand side of (\ref{cover 2}),  we have
  $$L_{kN}(x)= \max\limits_{i_k\in\Theta_N(k)}\{ d_{i_k}(x)\}.$$
 Thus, there exists $(i_1,i_2,\cdots,i_M)\in \prod_{k=1}^{M}\Theta_N(k) $ such that $L_{kN}(x)=d_{i_k}(x)$ for any $1\le k\le M$. Write
 \begin{align*}
   &\Delta_{MN}(i_1,i_2,\cdots,i_M)=\Big\{ (d_1,d_2,\ldots,d_{MN})\in \mathbb{N}^{MN}: d_{j}\geq 1~\text{for all}~j\neq i_k,~\text{and}\\
    &\left(1-\frac{1}{2M+1}\right)\alpha kN\leq d_{i_k}\leq \left(1-\frac{1}{2M+1}\right)\alpha kN~~ \text{for any}~i_k\in\Theta_N(k), 1\le k\le M\Big\}. 
 \end{align*}
  Therefore,
  
 \begin{align}\label{cover 3}
   &\nonumber  \bigcap\limits_{k=1}^{M}\left\{x\in (0,1]: \left(1-\frac{1}{2M+1}\right)\alpha kN\le L_{kN}(x)\le \left(1+\frac{1}{2M+1}\right)\alpha kN\right\} \\
    \subset & \bigcup_{(i_1,i_2,\cdots,i_M)\in \prod_{k=1}^{M}\Theta_N(k)}\bigcup\limits_{(d_1,d_2,\ldots,d_{MN})\in \Delta_{MN}(i_1,i_2,\cdots,i_M)} I_{MN}(d_1,d_2,\cdots,d_{MN}).
 \end{align}
 From (\ref{cover 1}), (\ref{cover 2}) and (\ref{cover 3}), we deduce that the set $E(\alpha)$ has natural coverings:
 $$E(\alpha)\subset \bigcup_{N=1}^{\infty}\bigcup_{(i_1,i_2,\cdots,i_M)\in \prod_{k=1}^{M}\Theta_N(k)}\bigcup\limits_{(d_1,d_2,\ldots,d_{MN})\in \Delta_{MN}(i_1,i_2,\cdots,i_M)} I_{MN}(d_1,d_2,\cdots,d_{MN}),\qquad\forall M\ge 1.$$

For fixed $(i_1,i_2,\cdots,i_M)\in \prod_{k=1}^{M}\Theta_N(k)$ and $0<s<1$, we  have
 \begin{align*}
&\sum\limits_{(d_1,d_2,\ldots,d_{MN})\in \Delta_{MN}(i_1,i_2,\cdots,i_M)} | I_{MN}(d_1,d_2,\cdots,d_{MN})|^s\\&=
\sum\limits_{(d_1,d_2,\ldots,d_{MN})\in \Delta_{MN}(i_1,i_2,\cdots,i_M)} \frac{1}{2^{(d_1(x)+d_2(x)+\cdots+d_{MN}(x))s}}\\&=
 \prod_{j\neq i_k}\left(\sum_{d_j(x)=1}^{\infty}\frac{1}{2^{d_j(x)}}\right)^s\prod_{k=1}^{M}
    \left(\sum_{d_{i_k}(x)=\lfloor(1-\frac{1}{2M+1})k\alpha N\rfloor}^{\lfloor(1+\frac{1}{2M+1})k\alpha N\rfloor}\frac{1}{2^{d_{i_k}(x)}}\right)^s\\
    &\le\left(\frac{1}{2^s-1}\right)^{MN-M} \prod_{k=1}^{M}\frac{4}{(2^s-1)2^{(1-\frac{1}{2M+1})k\alpha Ns}},
 \end{align*}
which is independent of the choice of $(i_1,i_2,\cdots,i_M)$. Since $\# \left(\prod_{k=1}^{M}\Theta_N(k)\right)=N^M$, we have
 \begin{align*}
   \mathcal{H}^s(E(\alpha))
    &\le \liminf_{M\to \infty}\sum_{N=1}^{\infty}
    (4N)^M\left(\frac{1}{2^s-1}\right)^{MN-M} \prod_{k=1}^{M}\frac{}{(2^s-1)2^{(1-\frac{1}{2M+1})k\alpha Ns}}\\
    &=\liminf_{M\to \infty}\sum_{N=1}^{\infty}(4N)^M \Bigg(\frac{1}{(2^s-1)2^{(1-\frac{1}{2M+1})\frac{\alpha(M+1)s}{2}}}\Bigg)^{MN}.
 \end{align*}
 For any $s>0$ and any $\alpha>0$, let $a(M)=(2^s-1)2^{(1-\frac{1}{2M+1})\frac{\alpha(M+1)s}{2}}$. It follows that
 $$\lim\limits_{M\to+\infty}a(M)=+\infty.$$ 
 Then
 \begin{align*}
   \mathcal{H}^s(E(\alpha)) & \le \liminf_{M\to \infty}\sum_{N=1}^{\infty}(4N)^M\Bigg(\frac{1}{a(M)}\Bigg)^{MN}
     \le \liminf_{M\to \infty}\sum_{N=1}^{\infty}\Bigg(\frac{100}{a(M)}\Bigg)^{MN}
    \leq \liminf_{M\to \infty}\Bigg(\frac{100}{a(M)}\Bigg)^{M}  =0,
 \end{align*}
 where the second inequality comes form the fact $N^M\le 100^{MN}$. Therefore,
 $$\dim_{\mathrm{H}}E(\alpha)=0.$$

 \section{Proof of Theorem \ref{main theorem 3}}
 If $r<1$, for any $\alpha>0$, we have
 $$\left\{x\in (0,1]:\lim_{n\to \infty}\frac{L_n(x)}{n^r}=\alpha\right\}\subset
 \left\{x\in (0,1]:\limsup_{n\to \infty}\frac{L_n(x)}{n^r}=\alpha\right\}.$$
 From Theorem \ref{main theorem 2}, we deduce that
 $$\dim_{\mathrm{H}}\left\{x\in (0,1]:\limsup_{n\to \infty}\frac{L_n(x)}{n^r}=\alpha\right\}=1.$$
  
 Now, we assume $r\ge 1$. For simplicity, we give only the proof for the case $r=1$. The case $r>1$ follows with similar arguments.
 
For any $\alpha>0$, set
$$\overline{E}(\alpha)=\left\{x\in (0,1]:\limsup_{n\to \infty}\frac{L_n(x)}{n}=\alpha\right\}.$$

\subsection{Upper bound of $\dim_{\mathrm{H}}\overline{E}(\alpha)$}
For any $\epsilon>0$, we have
$$
  \overline{E}(\alpha)
    \subset \bigcap_{\epsilon>0} \bigcap_{N=1}^{\infty}\bigcup_{n\ge N}\{L_n(x)\ge (1-\epsilon)\alpha n\}.
$$
By Lemma \ref{use3}, we obtain
$$\bigcap\limits_{N=1}^{\infty}\bigcup\limits_{n\ge N}\{L_n(x)\ge (1-\epsilon)\alpha n\}\subset \{x\in (0,1]: d_n(x)\ge (1-\epsilon)\alpha n~~\text{i.m.}~n\}.$$
Thus,
$$\overline{E}(\alpha)\subset \{x\in (0,1]: d_n(x)\ge (1-\epsilon)\alpha n^r~~\text{i.m.}~n\}.$$
From Theorem \ref{main theorem im}, we deduce that
$$\dim_{\mathrm{H}}\overline{E}(\alpha)\le s((1-\epsilon)\alpha).$$
Letting $\epsilon\to 0$, we obtain
$$\dim_{\mathrm{H}}\overline{E}(\alpha)\le s(\alpha).$$

\subsection{Lower bound of $\dim_{\mathrm{H}}\overline{E}(\alpha)$}

For $\alpha>0$,  we define a sequence $\{n_k\}_{k\in \mathbb{N}}$ as in (\ref{n_k}). 
Let $M\in \mathbb{N}$ and $F_M(\alpha)$ be defined as in (\ref{same}). Recall
\begin{align*}
  F_{M}(\alpha )=\{ &\nonumber x\in (0,1]:\lfloor \alpha n_k\rfloor+1\le d_{n_k}(x)\le \lfloor(1+1/k)\alpha n_k\rfloor,~\text{for all}~ k\ge 1, \\
   & \text{and}~1\le d_j(x)\le M,~\text{for all}~ j\neq n_k\}.
\end{align*}

\begin{proposition}
Let $\alpha>0$. Then $F_M(\alpha)\subset \overline{E}(\alpha)$.
\end{proposition}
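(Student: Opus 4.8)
The plan is to show that for each $x\in F_M(\alpha)$ the maximal digit among the first $n$ digits equals, once $n$ lies in a block between two consecutive terms of $\{n_k\}$, exactly the ``spike'' $d_{n_k}(x)$, and then to read off $\limsup_{n}L_n(x)/n$ from the size of these spikes. Throughout, fix $x\in F_M(\alpha)$ and write $\sigma_i=d_i(x)$. First I would record the growth of $\{n_k\}$: the second requirement in (\ref{n_k}), namely $n_1+\cdots+n_{k-1}<\frac1k n_k$, gives $n_k>k\,n_{k-1}$, hence $n_k/n_{k-1}\to\infty$ and, in particular, $n_k>2n_{k-1}$ for every $k\ge 2$.

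Next I would locate the maximal digit. Since $1\le\sigma_j\le M$ for every $j\notin\{n_\ell:\ell\ge1\}$ and $\sigma_{n_\ell}\le\lfloor(1+1/\ell)\alpha n_\ell\rfloor\le 2\alpha n_\ell$ for every $\ell\ge1$, while $\sigma_{n_k}\ge\lfloor\alpha n_k\rfloor+1>\alpha n_k$, the inequality $n_k>2n_{k-1}$ yields $\sigma_{n_k}>2\alpha n_{k-1}\ge\sigma_{n_j}$ for all $1\le j\le k-1$; moreover $\sigma_{n_k}>M$ once $k\ge k_1$ for a suitable $k_1$. Consequently, for every $k\ge k_1$ and every $n$ with $n_k\le n<n_{k+1}$, the digits among $\sigma_1,\ldots,\sigma_n$ exceeding $M$ can only occur at positions $n_1,\ldots,n_k$, so
$$L_n(x)\le\max\{M,\sigma_{n_1},\ldots,\sigma_{n_k}\}=\sigma_{n_k},$$
and trivially $L_n(x)\ge\sigma_{n_k}$; hence $L_n(x)=\sigma_{n_k}$.

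Finally I would compute the $\limsup$. By the previous paragraph, for $k\ge k_1$ and $n_k\le n<n_{k+1}$ we have $L_n(x)/n=\sigma_{n_k}/n$, which is maximised over this block at $n=n_k$ with value $\sigma_{n_k}/n_k$. Therefore
$$\limsup_{n\to\infty}\frac{L_n(x)}{n}=\lim_{K\to\infty}\ \sup_{k\ge K}\ \sup_{n_k\le n<n_{k+1}}\frac{L_n(x)}{n}=\lim_{K\to\infty}\ \sup_{k\ge K}\frac{\sigma_{n_k}}{n_k}=\limsup_{k\to\infty}\frac{\sigma_{n_k}}{n_k}.$$
From $\lfloor\alpha n_k\rfloor+1\le\sigma_{n_k}\le\lfloor(1+1/k)\alpha n_k\rfloor$ we get $\alpha<\sigma_{n_k}/n_k\le(1+1/k)\alpha$, so $\sigma_{n_k}/n_k\to\alpha$. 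Thus $\limsup_{n\to\infty}L_n(x)/n=\alpha$, i.e.\ $x\in\overline{E}(\alpha)$, and since $x\in F_M(\alpha)$ was arbitrary, $F_M(\alpha)\subset\overline{E}(\alpha)$.

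The main obstacle is the middle step: showing that the spike at position $n_k$ dominates not merely the bounded digits but also all earlier spikes $\sigma_{n_1},\ldots,\sigma_{n_{k-1}}$. This is precisely where the rapid-growth condition $n_1+\cdots+n_{k-1}<\frac1k n_k$ from (\ref{n_k}) is used; the surrounding estimates are elementary. Note also that the same computation automatically shows $\liminf_{n\to\infty}L_n(x)/n=0$ (take $n\to n_{k+1}^-$), which is consistent with $x$ not lying in $E(\alpha)=E(1,\alpha)$, in line with the dimension drop at $r=1$ described after Theorem \ref{main theorem 3}.
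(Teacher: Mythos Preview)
Your argument is correct and follows the same route as the paper, only with considerably more care. The paper's proof is a single displayed chain
\[
\alpha\le\limsup_{n\to\infty}\frac{L_n(x)}{n}=\limsup_{k\to\infty}\frac{d_{n_k}(x)}{n_k}\le\alpha\limsup_{k\to\infty}\Bigl(1+\tfrac1k\Bigr)=\alpha,
\]
which asserts without justification the central equality $\limsup_n L_n(x)/n=\limsup_k d_{n_k}(x)/n_k$. Your second and third paragraphs supply exactly the missing content: you show $L_n(x)=\sigma_{n_k}$ on each block $n_k\le n<n_{k+1}$ by verifying that the spike at $n_k$ dominates both the bounded digits and all earlier spikes (using $n_k>2n_{k-1}$ from the growth condition in (\ref{n_k})), and then observe that the blockwise maximum of $L_n(x)/n$ occurs at $n=n_k$. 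The final squeeze $\alpha<\sigma_{n_k}/n_k\le(1+1/k)\alpha$ is identical to the paper's. So the strategy is the same; you have simply made rigorous what the paper leaves implicit.
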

\begin{proof}
Fix $x\in  F_M(\alpha)$,  we have
 $$\alpha\leq \limsup_{n\to \infty}\frac{L_n(x)}{n}= \limsup_{k\to \infty}\frac{d_{n_{k}}(x)}{n_{k}}\leq\alpha \limsup_{k\to \infty}\left(1+\frac{1}{k}\right)=\alpha.$$
 Then we obtain
 $$F_M(\alpha)\subset \overline{E}(\alpha).$$
\end{proof}

From Proposition \ref{lemma2}, we deduce that
$$\dim_{\mathrm{H}}\overline{E}(\alpha )\ge \dim_{\mathrm{H}}F_M(\alpha) \ge s_M(\alpha ).$$
Letting $M\to \infty$, we have
$$\dim_{\mathrm{H}}\overline{E}(\alpha )\ge s(\alpha).$$

\section*{Acknowledgments}
We thank Professor Lingmin Liao for this numerous helpful suggestions. Zhihui Li was partially supported by Natural Science Foundation of Hubei Province of China 2022CFC013.

\end{document}